\theoremstyle{plain}
\newtheorem{prop}{Proposition}
\newtheorem{thm}[prop]{Theorem}
\newtheorem{cor}[prop]{Corollary}
\newtheorem{lem}[prop]{Lemma}
\newtheorem{fact}[prop]{Fact}
\newtheorem*{ques}{Question}
\newtheorem*{thmA}{Theorem A}
\newtheorem*{thmB}{Theorem B}
\newtheorem*{corC}{Corollary C}
\newtheorem*{thmD}{Theorem D}
\newtheorem*{thmE}{Theorem E}
\newtheorem*{thmF}{Theorem F}
\newtheorem*{quesG}{Question G}
\newtheorem*{quesH}{Question H}
\theoremstyle{definition}
\theoremstyle{remark}
\newtheorem{rem}[prop]{Remark}
\newtheorem{example}[prop]{Example}
\numberwithin{prop}{section} 
\numberwithin{prob}{section} 
\numberwithin{claim}{prop}
\numberwithin{equation}{section}
\newcommand{\kernel}{\mathrm{ker}}
\newcommand{\Hom}{\mathrm{Hom}}
\newcommand{\iid}{\mathrm{id}}
\newcommand{\spn}{\mathrm{span}}
\newcommand{\stab}{\mathrm{stab}}
\newcommand{\ccd}{\mathrm{cd}}
\newcommand{\ob}{\mathrm{ob}}
\newcommand{\op}{\mathrm{op}}
\newcommand{\Aut}{\mathrm{Aut}}
\newcommand{\idn}{\mathrm{ind}}
\newcommand{\rk}{\mathrm{rk}}
\newcommand{\euF}{\eu{F}}
\newcommand{\ev}{\mathrm{ev}}
\newcommand{\ca}[1]{\mathcal{#1}}
\newcommand{\eu}[1]{\mathfrak{#1}}
\newcommand{\Bor}{\mathrm{Bor}}
\newcommand{\caF}{\mathcal{F}}
\newcommand{\caC}{\mathcal{C}}
\newcommand{\supp}{\mathrm{supp}}
\newcommand{\dd}{\mathrm{d}}
\newcommand{\nor}{|\! |}
\newcommand{\spect}{\mathrm{sp}}
\newcommand{\Z}{\mathbb{Z}}
\newcommand{\Q}{\mathbb{Q}}
\newcommand{\R}{\mathbb{R}}
\newcommand{\F}{\mathbb{F}}
\newcommand{\N}{\mathbb{N}}
\newcommand{\C}{\mathbb{C}}
\newcommand{\E}{\mathbb{E}}
\newcommand{\boG}{\mathbf{G}}
\newcommand{\Iw}{\mathbf{I}}
\newcommand{\CO}{\mathfrak{CO}}
\newcommand{\caO}{\ca{O}}
\newcommand{\caB}{\ca{B}}
\newcommand{\caR}{\ca{R}}
\newcommand{\caS}{\ca{S}}
\newcommand{\QG}{\Q[G]}
\newcommand{\Bi}{\mathbf{Bi}}
\newcommand{\tchi}{\tilde{\chi}}
\newcommand{\trho}{\tilde{\rho}}
\newcommand{\eps}{\varepsilon}
\newcommand{\boh}{\mathbf{h}}
\newcommand{\tr}{\mathrm{tr}}
\newcommand{\der}{\partial}
\newcommand{\euV}{\mathscr{V}}
\newcommand{\euT}{\mathscr{T}}
\newcommand{\euE}{\mathscr{E}}
\newcommand{\gog}{\mathscr{G}}
\newcommand{\eue}{\mathbf{e}}
\newcommand{\caH}{\ca{H}}
\newcommand{\dbl}{[\![}
\newcommand{\dbr}{]\!]}
\newcommand{\End}{\mathrm{End}}
\newcommand{\bcH}{\overline{\caH}}
\newcommand{\bC}{\bar{\C}}
\newcommand{\QGmod}{{}_{\Q[G]}\mathbf{mod}}
\newcommand{\QGdis}{{}_{\Q[G]}\mathbf{dis}}
\newcommand{\FP}{\mathrm{FP}}
\newcommand{\caU}{\mathcal{U}}
\newcommand{\euS}{\mathscr{S}}
\newcommand{\tgamma}{\tilde{\gamma}}
\newcommand{\tzeta}{\tilde{\zeta}}
\newcommand{\argu}{\hbox to 7truept{\hrulefill}}
\newcommand{\ldot}{\, .\,}
\newcommand{\hboG}{\widehat{\boG}}
\title[Rank, Euler characteristic and zeta functions of t.d.l.c.~groups]{The Hattori--Stallings rank, the Euler--Poincar\'e characteristic and zeta functions of totally disconnected locally compact groups}
\author{I.~Castellano} 
\address{Fakultat f\"ur Mathematik, Universit\"at Bielefeld, Bielefeld, Germany}
\email{ilaria.castellano88@gmail.com}
\author{G.~Chinello}
\email{chinellogianmarco@gmail.com}
\author{Th.~Weigel}
\address{Department of Mathematics and Applications, University of Milano Bicocca, Milan, Italy}
\email{thomas.weigel@unimib.it}
\begin{document}
\begin{abstract}
For a unimodular totally disconnected locally compact group $G$ we introduce and study
an analogue of the Hattori--Stallings rank $\trho(P)\in\boh_G$
for a finitely generated projective rational discrete left $\Q[G]$-module $P$.
Here $\boh_G$ denotes the $\Q$-vector 
space of left invariant Haar measures of $G$.
Indeed, an analogue of Kaplansky's theorem holds in this context (cf. Theorem.~A). 
As in the discrete case, using this rank function it is possible to define
a rational discrete Euler--Poincar\'e characteristic $\tchi_G$ whenever $G$ 
is a unimodular totally disconnected locally compact group of type~$\FP_\infty$
of finite rational discrete cohomological dimension. E.g., when $G$ is a discrete group of type~$\FP$, then
$\tchi_G$ coincides with the ``classical'' Euler--Poincar\'e characteristic times the counting measure $\mu_{\{1\}}$. 
For a profinite group $\caO$, $\tchi_\caO$ equals the probability Haar measure $\mu_\caO$ on $\caO$.
Many more examples are calculated explicitly (cf.~Example~\ref{ex:tree} and  Section~\ref{s:euler}). 

In the last section, for a totally disconnected locally
compact group $G$ satisfying an additional finiteness condition,  we introduce and study
a formal Dirichlet series $\zeta_{_{G,\caO}}(s)$ for any compact open subgroup $\caO$.
In several cases it happens that $\zeta_{_{G,\caO}}(s)$
defines a meromorphic function $\tilde{\zeta}_{_{G,\caO}}\colon \C \to\bC$  of the complex plane
satisfying miraculously the identity $\tchi_G=\tzeta_{_{G,\caO}}(-1)^{-1}\cdot\mu_\caO$. 
Here $\mu_{\caO}$ denotes the Haar measure of $G$ satisfying $\mu_{\caO}(\caO)=1$.
\end{abstract}
\date{\today}
\maketitle
\section{Introduction}
\label{s:intro}
For a totally disconnected locally compact (= t.d.l.c.) group $G$ it seems conceivable that the category of discrete left $\QG$-modules $\QGdis$ is a very appropriate category of representations for $G$; see \cite{ACCM23,Cast20,cw:qrat,CCC20,CMW22}). 
In this context there are two rational discrete $\QG$-bimodules which may be interpreted 
as analogues of the group algebra: the 
rational discrete standard bimodule $\Bi(G)$ 
and the $\Q$-vector space of continuous functions $\caC_c(G,\Q)$ with compact support (see \eqref{eq:BG} and \eqref{eq:CcG}). In case that $G$ is unimodular, these two bimodules turn out to be isomorphic but not canonically isomorphic. Indeed, an isomorphism $\psi^{(\caO)}\colon\Bi(G)\to\caC_c(G,\Q)$ can be constructed for any compact open subgroup $\caO$ of $G$. 

For a unimodular t.d.l.c.~group $G$ and a compact open subgroup $\caO$,
let $\mu_{\caO}$ denote the left-invariant (and thus right-invariant) Haar measure on $G$
satisfying $\mu_\caO(\caO)=1$. By
\begin{equation}
\label{eq;defh}
\boh_G=\Q\cdot\mu_{\caO}
\end{equation}
we denote the $\Q$-vector space of Haar measures of $G$ containing all Haar measures which restriction to a compact open subgroup $\caO$ is the probability measure on $\caO$. It naturally contains the positive cone $\boh^+_G=\Q^+\cdot\mu_\caO$ and decomposes as $\boh_G=\boh_G^+\sqcup\{0\}\sqcup\boh^-_G$, where $\boh^-_G=-\boh^+_G$. 
For short we will write $\lambda>0$ for $\lambda\in\boh_G^+$, etc. 

One may define a $\Q$-linear map $\tr\colon \Bi(G)\rightarrow\boh_G$ given by
\begin{equation}
    \tr(x)=\psi^{(\caO)}(x)(1)\cdot\mu_\caO,\quad x\in\Bi(G),
\end{equation}
which satisfies  $\tr(g\cdot x)=\tr(x\cdot g)$ for all $g\in G$ and $x\in\Bi(G)$. Therefore, one may consider the map $\tr$ to be a {\it trace function} on $\Bi(G)$.
This trace function  together with the so-called $\Hom$-$\otimes$ identity for discrete $\QG$-modules (cf.~\cite[\S 4.3]{cw:qrat}) will be used here to define a {\it Hattori--Stallings rank} $\trho(P)\in\boh_G$
for any finitely generated projective discrete left $\QG$-module $P$. The rank function $\trho$ satisfies the following t.d.l.c.~analogue of Kaplansky's theorem (see~\cite{kap:fields,mont}), whose proof absorbs the first part of the paper.
\begin{thmA}
Let $G$ be a unimodular t.d.l.c.~group, and let $P\in \ob(\QGdis)$ be a
finitely generated projective discrete left $\QG$-module. Then
$\trho(P)\geq 0$.
Moreover, $\trho(P)=0$ if, and only if, $P=0$.
\end{thmA}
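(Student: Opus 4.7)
My plan is to parallel Kaplansky's argument for the group ring of a discrete group: realise the algebraic trace $\tr$ as the restriction of a faithful positive trace on a suitable von Neumann algebra, and exploit the fact that every idempotent in such an algebra is Murray--von Neumann equivalent to a self-adjoint projection, whose trace is automatically non-negative.

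First I would reduce Theorem~A to a positivity claim about a single idempotent. Since the natural projective generators of $\QGdis$ are the permutation modules $\Q[G/\caU]$ for $\caU\leq G$ compact open (they represent the exact functor $M\mapsto M^{\caU}$), there exist compact open subgroups $\caO_1,\dots,\caO_n\leq G$ and an idempotent $e\in\End_{\Q[G]}(F)$, with $F=\bigoplus_{i=1}^n\Q[G/\caO_i]$, such that $P=eF$. After replacing all summands by modules induced from a common compact open subgroup $\caO\leq\bigcap_i\caO_i$, one arranges $\trho(P)\in\Q\cdot\mu_\caO$. The $\Hom$--$\otimes$ identity of \cite{cw:qrat} then unwinds the definition of $\trho(P)$ into the matrix-trace $\tr(e)$, whose diagonal entries lie in $\End_{\Q[G]}(\Q[G/\caO_i])\subseteq\Bi(G)$.

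Second, I would pass to the analytic setting. The module $F$ embeds canonically into $\ca{H}=\bigoplus_{i=1}^n L^2(G/\caO_i,\C)\hookrightarrow L^2(G,\C)^{\,n}$, and the action of $\End_{\Q[G]}(F)$ extends by convolution to a $*$-action of an appropriate corner of $M_n(\ca{L}(G))$ on $\ca{H}$, where $\ca{L}(G)$ denotes the group von Neumann algebra of $G$. Unimodularity of $G$ supplies a faithful normal semifinite Plancherel trace $\tau$ on $\ca{L}(G)$ whose restriction to $\ca{C}_c(G,\C)$ (acting via the left regular representation, with Haar measure normalised by $\mu_\caO(\caO)=1$) is the evaluation $f\mapsto f(1)$. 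Through $\psi^{(\caO)}$ this is precisely the rational coefficient of $\mu_\caO$ in $\tr$, so the matrix amplification $\tau_n$ of $\tau$ satisfies $\tau_n(e)\cdot\mu_\caO=\tr(e)$. Letting $q\in M_n(\ca{L}(G))$ denote the orthogonal projection onto the norm closure of $e\,\ca{H}$, the idempotents $e$ and $q$ are Murray--von Neumann equivalent in $M_n(\ca{L}(G))$, hence $\tau_n(e)=\tau_n(q)\geq 0$, with equality if and only if $q=0$, equivalently $eF=0$, equivalently $P=0$. This yields $\trho(P)\geq 0$ and the claimed vanishing criterion.

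The main technical obstacle is the faithful book-keeping of the commutative diagram linking the combinatorial trace $\tr\colon\Bi(G)\to\boh_G$ (defined through the isomorphisms $\psi^{(\caO)}$) with the Plancherel trace on $\ca{L}(G)$, across matrix amplification and across the varying compact open subgroups $\caO_i$. The normalisation $\mu_\caO(\caO)=1$ built into $\boh_G$ is designed precisely to make this diagram commute, but one must still carefully account for the indices $[\caO_i:\caO]$ that arise when identifying $L^2(G/\caO_i,\C)$ with an $\caO$-equivariant subspace of $L^2(G,\C)$, so that the rational number extracted from $\tau_n(e)$ coincides with the coefficient of $\mu_\caO$ in $\trho(P)$ without discrepancy.
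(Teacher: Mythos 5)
Your proposal is correct in outline and shares the paper's governing strategy --- write $P=eF$ for an idempotent $e$ acting on a finite direct sum of permutation modules $\Q[G/\caO]$, identify the Hattori--Stallings rank with the trace of the diagonal of $e$, realise that algebraic trace as the restriction of a faithful positive trace on an operator-algebra completion of the Hecke algebra, and conclude by positivity of traces on idempotents --- but your analytic vehicle is genuinely different from the paper's. The paper stays at the $C^\ast$-level: it completes $\caH(G,\caO)_\C$ to the $C^\ast$-Hecke algebra $\bcH(G,\caO)$ acting by right convolution on $L^2(G,\C)^{\caO}$, builds the trace $\tau_{[1\dbr}(h)=\langle h\,.\,[1\dbr,[1\dbr\rangle$ by hand from the special element $[1\dbr=I_\caO$ (unimodularity enters exactly in verifying the trace property), and obtains positivity and the vanishing criterion for idempotents from Kaplansky's purely algebraic lemma ($ef=f$, $fe=e$ with $f$ a self-adjoint idempotent), together with the matrix amplification of the trace; no von Neumann theory is needed. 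You instead work in a finite-trace corner of $M_n$ of the group von Neumann algebra with the Plancherel trace and compare $e$ with its range projection $q$; this genuinely needs the weak closure (range projections need not exist in a $C^\ast$-algebra) and outsources positivity, normality, faithfulness and the trace property to the standard Plancherel theory of unimodular groups. Two bookkeeping points you should make explicit: the Hecke operators are \emph{right} convolutions, hence lie in the commutant of the left regular representation, so ``corner of $M_n(\ca{L}(G))$'' must be read through the canonical anti-isomorphism with the right (commutant) algebra; and the corner cut out by the projections $I_{\caO_i}$ is precisely the Hecke--von Neumann algebra $W(G,\caO)$ that the paper mentions only in a remark, on which the restricted Plancherel trace agrees with $\tau_{[1\dbr}$, since both are evaluation at $1$ on ${}^{\caO}\caC_c(G,\C)^{\caO}$ --- this is exactly the compatibility $\tau_n(e)\cdot\mu_\caO=\tr(e)$ you flag as the remaining diagram chase, and it is the computation the paper carries out explicitly with the maps $\eta_i$. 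What each approach buys: the paper's construction is elementary and self-contained (only basic $C^\ast$-facts plus Kaplansky's lemma), whereas yours is shorter modulo standard theory and ties $\trho$ directly to von Neumann dimension, which meshes well with the $L^2$-Betti number picture the paper invokes elsewhere (Remark~\ref{rem:betti}).
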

\begin{rem}
\label{rem:HSperm} For every compact open subgroup $\ca O$ of $G$,
 the Hattori--Stallings rank of the discrete permutation $\QG$-module $\Q[G/\caO]$ has value $1\cdot\mu_\caO$ (see~Pro\-position~\ref{prop:HSperm}).
 \end{rem}
We devote the second part of the paper to introduce and study 
the {\emph{rational Euler--Poincar\'e characteristic}} $\tchi_G$ of 
a unimodular t.d.l.c.~group $G$ of type~$\FP$. 
Recall that the t.d.l.c.~group $G$ is said to be {\em of type~$\FP$} if the trivial left $\QG$-module $\Q$
has a finite projective resolution 
\begin{equation}
\label{eq:proj1}
\xymatrix{0\ar[r]&P_n\ar[r]^-{\der_n}& P_{n-1}\ar[r]^-{\der_{n-1}}&\cdots\ar[r]^-{\der_1} &P_0\ar[r]^-{\eps}&\Q\ar[r]&0}
\end{equation}
in the category $\QGdis$ with each projective discrete left $\QG$-module $P_j$ being finitely generated
(cf.~\cite[\S 4.5]{cw:qrat}). For a unimodular t.d.l.c.~group $G$ of this type
 we define the Euler--Poincar\'e characteristic $\tchi_G$ of $G$  by
\begin{equation}
\label{eq:defep}
\tchi_G=\sum_{k\geq 0} (-1)^k\,\trho(P_k)\in\boh_G.
\end{equation}
As mentioned in \cite{ser:coh} one verifies easily that the value of $\tchi_G$ does not depend on the chosen
projective resolution $(P_\bullet,\der_\bullet)$ of $\Q$ (cf.~\cite[Theorem~2.2.]{stal:centerless}). The definition of the rational Euler--Poincar\'e characteristic $\tilde\chi_G$ for t.d.l.c.~groups traces back to 2015 when it first appeared in the first author's Ph.D.~thesis 
(cf.~\cite[Chapter~2]{Il:PhD}).
Unfortunately, it did not find its way into \cite{cw:qrat}.
 In \cite{pst:betti}  a notion of Euler characteristic was introduced for t.d.l.c.~groups that admit a cocompact topological model. 
The authors related 
 the Euler characteristic to the alternating sum of the $\ell^2$-Betti numbers introduced by  H.D.~Petersen \cite{Pet11}. 
 Note that in this context the Euler characteristic is a rational number depending on the choice of a compact open subgroup $\caO$ of $G$.
 In Subsection~\ref{ss:top mod} we deal with the case of t.d.l.c.~groups admitting a cocompact topological model and show that the two definitions of the Euler characteristic essentially coincide (cf.~Remark~\ref{rem:betti}).
\begin{rem}
\label{rem:EPdis}
Let $G$ be an abstract group which is 
of type~$\FP$ over $\Q$.
Then $G$ is of type~$\FP$, as a discrete t.d.l.c.~group, and $\tchi_G=\chi_G\cdot\mu_{\{1\}}$,
where $\mu_{\{1\}}$ is the counting measure on $G$ and $\chi_G$ is the classical Euler--Poincar\'e characteristic of the  group $G$
(cf.~\cite[pg.~4]{chis:euler} and Remark~\ref{rem:HS disc}).
\end{rem}
\begin{rem}
\label{rem:comptdlc}
Let $\ca O$ be a profinite group. Then $\ca O$ is trivially of type~$\FP$ as compact t.d.l.c.~group (cf.~\cite[Proposition~3.7(a)]{cw:qrat}) and $\tchi_{\ca O}=1\cdot \mu_{\ca O}$, where $\mu_{\ca O}$
is the probability Haar measure on $\ca O$ (cf.  Section \ref{sss:coeul}). 
\end{rem}
The  Euler--Poincar\'e characteristic of an abstract or profinite group is 
an important, but also quite mysterious invariant 
(cf.~\cite[Chap.~IX, \S 8]{brown:coh}, \cite{har:gb}, \cite[\S II.5.4]{ser:gal}, \cite[8.6.14]{nsw:coh}).
It is usually just an integer or a rational number and reflects many quite significant properties.
Surprisingly, in the context of t.d.l.c.~groups, the Euler--Poincar\'e characteristic $\tchi_G$ is no longer just an integer or a rational number, but a rational multiple of a Haar measure, i.e., $\tchi_G\in\boh_G=\Q\cdot\mu_{\caO}$.
In particular, in the best possible situation, one has the following:
\begin{thmB}[\protect{cf.~Theorem~\ref{thm:chi simpl}}]
Let $G$ be a unimodular t.d.l.c.~group acting on a $d$-dimensional abstract
simplicial complex $\Sigma$ with   compact open stabili\-sers.
For every $k\leq d$ let $\Omega_k$ be a set of representatives of the $G$-orbits on the $k$-dimensional simplices of $\Sigma$, and suppose $|\Omega_k|<\infty$. If $\Sigma$ is contractible
then
\begin{equation*}
\tchi_G=\sum_{0\leq k\leq d}(-1)^k\Big(\sum_{\omega\in\Omega_k}1\cdot\mu_{G_{\pm\omega}}\Big),
\end{equation*}
where $G_{\pm\omega}:=\mathrm{stab}_G(\{\omega,-\omega\})$. Here we regard the orbit $G\cdot\omega$ as a subset of the signed $G$-set $\widetilde{\Sigma}_k$ (see~\cite[(A.8)]{cw:qrat}).
\end{thmB}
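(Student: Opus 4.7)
The plan is to construct a finite projective resolution of the trivial $\QG$-module $\Q$ directly from the simplicial complex $\Sigma$ and then read off $\tchi_G$ from the definition \eqref{eq:defep}.

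Since $\Sigma$ is contractible and of dimension $d$, the augmented oriented simplicial chain complex with rational coefficients
\begin{equation*}
0 \to C_d(\Sigma,\Q) \to \cdots \to C_0(\Sigma,\Q) \to \Q \to 0
\end{equation*}
is exact. Each $C_k(\Sigma,\Q)$ carries a natural discrete $\QG$-module structure induced by the $G$-action on $\widetilde\Sigma_k$. First I would show that each $C_k(\Sigma,\Q)$ is finitely generated projective in $\QGdis$ by decomposing it along $G$-orbits on $k$-simplices: for $\omega\in\Omega_k$, set $G_\omega=\stab_G(\omega)$, and observe that either $G_\omega=G_{\pm\omega}$, in which case the orbit of $\omega$ contributes the permutation module $\Q[G/G_{\pm\omega}]$, or $[G_{\pm\omega}:G_\omega]=2$, in which case the relation $\omega+(-\omega)=0$ inside $C_k$ forces the orbit to contribute the induced sign representation $\Q_\eps\uparrow_{G_{\pm\omega}}^G$, where $\eps$ is the nontrivial character of $G_{\pm\omega}/G_\omega$. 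Compactness and openness of the stabilisers, together with $|\Omega_k|<\infty$, make $C_k$ finitely generated projective.

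Next I would compute the Hattori--Stallings rank of each orbit summand. For the permutation module this is exactly Remark~\ref{rem:HSperm}. For the induced sign representation I would use the isotypical decomposition of $\Q[G/G_\omega]$ under the action of $\Z/2\cong G_{\pm\omega}/G_\omega$, namely
\begin{equation*}
\Q[G/G_\omega] \cong \Q[G/G_{\pm\omega}] \oplus \bigl(\Q_\eps\uparrow_{G_{\pm\omega}}^G\bigr),
\end{equation*}
together with additivity of $\trho$, to obtain
\begin{equation*}
\trho\bigl(\Q_\eps\uparrow_{G_{\pm\omega}}^G\bigr) = \mu_{G_\omega}-\mu_{G_{\pm\omega}} = 2\mu_{G_{\pm\omega}} - \mu_{G_{\pm\omega}} = \mu_{G_{\pm\omega}},
\end{equation*}
where $\mu_{G_\omega}=2\mu_{G_{\pm\omega}}$ follows from the normalisation $\mu_H(H)=1$ and $[G_{\pm\omega}:G_\omega]=2$. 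Thus in both cases the contribution is the common value $\mu_{G_{\pm\omega}}$.

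Substituting these rank values into \eqref{eq:defep}, applied to the resolution above, and summing orbit-by-orbit in each dimension gives the stated formula. The main obstacle is the orientation-reversing case: correctly identifying the summand as $\Q_\eps\uparrow_{G_{\pm\omega}}^G$, producing its complementary permutation summand inside $\Q[G/G_\omega]$, and reconciling the two Haar-measure normalisations so that cases (a) and (b) yield the single clean value $\mu_{G_{\pm\omega}}$ that appears in the statement. Once that uniformisation is in place, the remainder is a bookkeeping exercise on a finite acyclic chain complex.
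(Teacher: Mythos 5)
Your proposal is correct and follows essentially the same route as the paper: the augmented oriented chain complex $(C_\bullet(\Sigma),\partial_\bullet,\varepsilon)$ is used as a finite projective resolution of $\Q$ in $\QGdis$, each orbit summand is identified with the induced (trivial or orientation) module from $G_{\pm\omega}$, and its Hattori--Stallings rank is computed from the decomposition of $\Q[G/G_\omega]$ together with Proposition~\ref{prop:HSperm} and additivity. Your isotypical splitting $\Q[G/G_\omega]\cong\Q[G/G_{\pm\omega}]\oplus\idn_{G_{\pm\omega}}^G(\Q_\eps)$ is just the split short exact sequence used in Lemma~\ref{lem:signed} phrased as a direct sum, and the normalisation $\mu_{G_\omega}=2\mu_{G_{\pm\omega}}$ matches the paper's final step.
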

From Theorem~B one obtains the following equivalent of \cite[\S II.2.9 ($\ast$)]{serre:trees} for t.d.l.c.~groups (cf. $\S$\ref{sss:fundeul} and Proposition~\ref{prop:muless}).
\begin{corC}
Let $\Lambda=(\euV,\euE)$ be a finite connected graph, and let $\gog$ be a 
unimodular\footnote{Here the graph of profinite groups $\gog$ is said to be unimodular, if
$\pi_1(\gog,\Lambda,x_0)$ is a unimodular t.d.l.c.~group.}
 graph of profinite groups
based on $\Lambda$ (cf.~\cite[\S~5.5]{cw:qrat}). Then
\begin{equation*}
\label{eq:fund}
\tchi_{\pi_1(\gog,\Lambda,x_0)}=\sum_{v\in\euV}1\cdot\mu_{\gog_v}-\sum_{\eue\in\euE^g}1\cdot\mu_{\gog_\eue},
\end{equation*}
where $\euE^g\subseteq\euE$ denotes a set of representatives of the $\Z/2\Z$-action on $\euE$
given by edge inversion. In particular, $\tchi_{\pi_1(\gog,\Lambda,x_0)}\leq 0$.
\end{corC}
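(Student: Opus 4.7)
The plan is to feed the Bass--Serre tree of $\gog$ into Theorem~B.

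Set $G:=\pi_1(\gog,\Lambda,x_0)$ and let $T=T(\gog,\Lambda,x_0)$ be its Bass--Serre tree, as constructed in \cite[\S~5.5]{cw:qrat}. A tree is a $1$-dimensional contractible abstract simplicial complex. The canonical $G$-action on $T$ has vertex orbits naturally parametrised by $\euV$ and geometric-edge orbits parametrised by $\euE^g$, both finite because $\Lambda$ is finite; moreover the action does not invert edges. The stabiliser of a chosen preimage of $v\in\euV$ (respectively $\eue\in\euE^g$) is conjugate in $G$ to the profinite group $\gog_v$ (respectively $\gog_\eue$), and is therefore a compact open subgroup of $G$. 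By hypothesis $G$ is unimodular, so all assumptions of Theorem~B are fulfilled with $d=1$.

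Applying Theorem~B then yields
\begin{equation*}
\tchi_G \;=\; \sum_{v\in\euV} 1\cdot\mu_{G_{\pm v}} \;-\; \sum_{\eue\in\euE^g} 1\cdot\mu_{G_{\pm \eue}}.
\end{equation*}
Since $0$-simplices carry no orientation sign, $G_{\pm v}=G_v$; since the $G$-action on $T$ does not invert edges, $G_{\pm \eue}=G_\eue$. Conjugate compact open subgroups of a unimodular t.d.l.c.~group share their normalised Haar measure, so $\mu_{G_v}=\mu_{\gog_v}$ and $\mu_{G_\eue}=\mu_{\gog_\eue}$. Substituting these identifications produces the formula of the corollary.

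The concluding inequality $\tchi_G\leq 0$ is then extracted from the companion Proposition~\ref{prop:muless}. The real work lies there rather than in the present reduction: translated into rational numbers via any reference Haar measure $\mu$, the claim becomes $\sum_v 1/\mu(\gog_v)\leq \sum_\eue 1/\mu(\gog_\eue)$, and the naive local estimate $\mu(\gog_\eue)\leq\mu(\gog_v)$ (coming from $\gog_\eue\leq\gog_v$ whenever $v$ is an endpoint of $\eue$) is insufficient because a pendant vertex is covered by only a single edge. A natural way to close the gap is to split each edge contribution equally between its two endpoints and then globalise using the connectedness of $\Lambda$ by means of a spanning-tree argument.
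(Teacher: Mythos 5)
Your treatment of the displayed formula is correct and is exactly the paper's route (Subsection~\ref{sss:fundeul}): feed the Bass--Serre tree, a contractible one-dimensional complex on which $\Pi=\pi_1(\gog,\Lambda,x_0)$ acts without inversions, with compact open stabilisers and finitely many orbits indexed by $\euV$ and $\euE^g$, into Theorem~\ref{thm:chi simpl}, and use unimodularity to identify the normalised Haar measures of conjugate stabilisers with $\mu_{\gog_v}$ and $\mu_{\gog_\eue}$.

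The gap lies in the inequality $\tchi_{\Pi}\leq 0$, which is the part you actually attempt to argue. Halving each edge term between its two endpoints and then appealing to a spanning tree cannot work as stated: such an argument uses only the containments $\gog_\eue\subseteq\gog_v$ and the combinatorics of $\Lambda$, and it would therefore apply equally in situations where the inequality is false. For instance, if $\Lambda$ is a single edge and both edge maps are isomorphisms, then $\Pi$ is isomorphic to a vertex group, hence profinite, and the alternating sum equals $1\cdot\mu_{\Pi}>0$; this is precisely why Proposition~\ref{prop:muless} assumes $\Pi$ non-compact. Quantitatively, for a fixed Haar measure $\mu$ of $\Pi$, equal splitting hands a pendant vertex only $\tfrac{1}{2}\mu(\gog_\eue)^{-1}\geq\tfrac{1}{2}\mu(\gog_v)^{-1}$ of the required $\mu(\gog_v)^{-1}$, with equality in the degenerate case, so the defect you yourself identified is not removed. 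The paper's proof of Proposition~\ref{prop:muless} supplies the two missing ingredients: first, collapse every edge whose attaching map $\alpha_{\eue}\colon\gog_\eue\to\gog_{t(\eue)}$ is surjective; this changes neither $\pi_1$ nor the value of the alternating sum, and non-compactness guarantees the process does not end in a single vertex with no edges. After this reduction every edge group has index at least two in both neighbouring vertex groups, and unimodularity (the two images of $\gog_\eue$ are conjugate in $\Pi$, hence have equal Haar measure) upgrades the naive estimate to the local inequality $\mu_{\gog_\eue}\geq\mu_{\gog_{t(\eue)}}+\mu_{\gog_{o(\eue)}}$ for every edge. Only with this in hand does a globalisation succeed --- the paper sums over the edges of a maximal subtree, and at that stage your equal-splitting would work just as well. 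Finally, if you intend simply to quote Proposition~\ref{prop:muless} for this part (as the paper does), you should note explicitly that it covers only the non-compact case, which is the intended reading of the corollary's last assertion.
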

The final conclusion of Corollary~C holds in a more general context. Indeed, in a forthcoming paper the following more general result will be shown.
\begin{thmD}[\protect{\cite[Theorem~G]{CMW22}}]
Let $G$ be a compactly generated unimodular t.d.l.c.~group satisfying $\ccd_{\Q}(G)=1$. Then
$\tchi_G\leq 0$.
\end{thmD}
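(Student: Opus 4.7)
The strategy is to reduce to a graph-of-groups description of $G$ and invoke Corollary~C. The first step is structural: one needs that a compactly generated unimodular t.d.l.c.~group $G$ satisfying $\ccd_\Q(G)=1$ acts cocompactly and without edge inversions on a tree $T$ with compact open vertex stabilisers---the t.d.l.c.~analogue of Stallings--Swan, to be established via an accessibility/ends argument in \cite{CMW22}. Once this is available, Bass--Serre theory realises $G$ as the fundamental group $\pi_1(\gog,\Lambda,x_0)$ of a finite connected graph of profinite groups $(\gog,\Lambda)$ with $\Lambda=G\backslash T$, and Corollary~C supplies
\begin{equation*}
\tchi_G=\sum_{v\in\euV}\mu_{\gog_v}-\sum_{\eue\in\euE^g}\mu_{\gog_\eue}\in\boh_G.
\end{equation*}
The remaining task is to show this alternating sum is non-positive.

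Next I would pass to a \emph{reduced} graph of profinite groups: if $\eue$ has distinct endpoints $v\neq v'$ with $\gog_\eue=\gog_v$ under the relevant boundary monomorphism, then $\eue$ may be collapsed without changing $\pi_1(\gog,\Lambda,x_0)$, and the cancellation $\mu_{\gog_v}-\mu_{\gog_\eue}=0$ leaves the alternating sum unchanged. After reduction, every non-loop edge $\eue$ with endpoints $v,v'$ satisfies $[\gog_v:\gog_\eue]\geq 2$ and $[\gog_{v'}:\gog_\eue]\geq 2$; the Haar-normalisation identity $\mu_{\gog_\eue}=[\gog_v:\gog_\eue]\cdot\mu_{\gog_v}$ then yields
\begin{equation*}
\mu_{\gog_\eue}\;\geq\;\mu_{\gog_v}+\mu_{\gog_{v'}},
\end{equation*}
whereas a loop at $v$ retains only the weaker bound $\mu_{\gog_\eue}\geq\mu_{\gog_v}$.

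Summing these edge inequalities and grouping the contributions by vertex gives $\sum_{\eue}\mu_{\gog_\eue}\geq\sum_{v}m_v\,\mu_{\gog_v}$, where $m_v$ counts the non-loop edges incident to $v$ plus the loops at $v$. The hypothesis $\ccd_\Q(G)=1$ rules out $G$ being profinite (which would force $\ccd_\Q(G)=0$), so the reduced $\Lambda$ contains at least one edge; connectedness of $\Lambda$ then forces $m_v\geq 1$ for every vertex. Consequently $\sum_\eue\mu_{\gog_\eue}\geq\sum_v\mu_{\gog_v}$, and therefore $\tchi_G\leq 0$.

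The central difficulty is the structural step: promoting the abstract cohomological hypothesis $\ccd_\Q(G)=1$ to a concrete cocompact tree action with compact open stabilisers. For abstract groups this is Stallings--Swan; in the t.d.l.c.~setting it requires a Dunwoody-type accessibility argument formulated inside the category $\QGdis$---likely detected by the non-vanishing of $H^1(G,\caC_c(G,\Q))$---and is precisely the substantive content of \cite{CMW22}. Once that tree action is in hand, the Euler--Poincar\'e computation sketched above is essentially formal.
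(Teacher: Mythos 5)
There is a genuine gap, and it sits exactly where the theorem's difficulty lies. Note first that this paper does not prove Theorem~D at all: it is quoted verbatim from the forthcoming work \cite{CMW22}, and the text around it says explicitly that the result ``will be shown'' there. Your proposal does not close that gap either: the decisive step --- that a compactly generated unimodular t.d.l.c.\ group with $\ccd_{\Q}(G)=1$ acts cocompactly and without inversions on a tree with compact open stabilisers, equivalently is the fundamental group of a \emph{finite} connected graph of profinite groups --- is asserted ``to be established via an accessibility/ends argument in \cite{CMW22}.'' Since that structural statement is essentially equivalent in strength to the theorem you are asked to prove (it is the t.d.l.c.\ Stallings--Swan/Dunwoody input), deferring it to the very reference the theorem cites makes the argument circular as a proof. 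It is also not harmless hand-waving: compact generation does not obviously yield a \emph{finite} quotient graph; accessibility over compact open subgroups and the passage from $\ccd_{\Q}(G)=1$ to an actual splitting are nontrivial in this category, and it is not even clear that \cite{CMW22} proves Theorem~D via such a splitting rather than by a direct cohomological argument avoiding a structure theorem. The side remark that the splitting should be ``detected by the non-vanishing of $H^1(G,\caC_c(G,\Q))$'' is speculation, not an argument.

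The second half of your proposal --- reducing the graph of profinite groups, the inequality $\mu_{\gog_\eue}\geq\mu_{\gog_v}+\mu_{\gog_{v'}}$ for non-loop edges with both boundary indices at least $2$, the weaker bound for loops, and the count over vertices of a connected graph with at least one edge --- is correct, but it is not new relative to the paper: it reproduces, with slightly more care about loops, the content of Corollary~C together with Proposition~\ref{prop:muless} in Subsection~\ref{sss:fundeul} (whose proof likewise collapses edges with a surjective boundary monomorphism and then sums the resulting measure inequalities). So what you have is a correct derivation of Theorem~D \emph{conditional on} a t.d.l.c.\ splitting theorem, plus a reprise of the paper's own argument for fundamental groups of finite graphs of profinite groups; the unconditional content of Theorem~D, which is exactly the unproved structural step, is missing.
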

By
M.~Dunwoody's theorem (see~\cite[Theorem~1.1]{dun:acc})  the conclusion of Theorem~D is well known for finitely generated discrete groups. Moreover,  a well known result - often attributed to M.~Gromov - states that a discrete group $G$ has a Cayley graph quasi-isometric to a tree if and only if it has a finitely gene\-rated free subgroup of finite index. Therefore, the sign of the Euler--Poincar\'e characteristic of a finitely generated group of rational cohomological dimension 1 is determined by the (non-positive) characteristic of its finite-index free subgroup (see~\cite[Theorem 1]{chis:euler}).

\smallskip 

For a discrete group $G$ every finite symmetric generating system $\Sigma$
defines a length function $\ell\colon G\to\Z$ satisfying $\ell(g)=1$ if, and only if, $g\in\Sigma$.
The growth series of $(G,\Sigma)$ is defined by
\begin{equation*}
\label{eq:grwth}
\gamma_{_{G,\Sigma}}(t)=\sum_{g\in G} t^{\ell(g)}\in\Z\dbl t\dbr.
\end{equation*}
By definition, it is just a formal power series with non-negative integral coefficients.
However, in many significant cases - like for Coxeter groups with their canonical generating system
(see \cite[\S~5.12, Proposition]{hum:cox}) - $\gamma_{_{G,\Sigma}}(t)$ coincides 
with the Taylor expansion in 0 of a rational function 
$\tgamma_{_{G,\Sigma}}(t)\in\C(t)$.

It was shown by J-P.~Serre (see~\cite[\S1.9]{ser:coh}) that for any Coxeter group $(W,\Sigma)$ one has
\begin{equation}
\label{eq:ser}
\chi_{_W}=\frac{1}{\tgamma_{_{W,\Sigma}}(1)}.
\end{equation}
By explicit calculations, one knows that \eqref{eq:ser} also holds for right-angled Artin groups
with their standard generating system (cf.~\cite{atpr:raag}) as well as for surface groups with their standard generating system (cf.~\cite{canwag:grsu}). However, it is also known that the free group on two generators
admits a generating system for which \eqref{eq:ser} does not hold (cf.~\cite{parry:count}).

The work of G.~Harder on arithmetic lattices $G(\caO)$,
where $\caO$ is the ring of integers of a number field $F$ and $G$ is a Chevalley group scheme,
has already revealed a mysterious connection between the {\it Dedekind zeta function} 
$\zeta_{_F}$ of $F$,
and the Euler--Poincar\'e characteristic of $G(\caO)$ (cf.~\cite[Chap.~IX, \S 8]{brown:coh}, \cite{har:gb}).
In a different context, S.~Bouc observed that for a finite group $G$ its
{\it probabilistic zeta function} $P(G,s)$ evaluated in $(-1)$ coincides 
with the reduced Euler--Poincar\'e characteristic of the coset poset of $G$ 
multiplied by $(-1)$ (cf.~\cite{brown:zeta}).
The question we are considering in the last section is the following: is there a function or series
which is miraculously
related to the Euler--Poincar\'e characteristic of a t.d.l.c.~group? 

For a t.d.l.c.~group $G$ and a compact open subgroup $\caO\subseteq G$ let 
$\caR$ denote a set of representatives of the $\caO$-double cosets in 
$G$.
We say that $G$ satisfies the {\emph{double coset property with respect to $\caO$}} if
$\caR(n)=\{r\in \caR\mid \mu_\caO(\caO r\caO)=n\}$
is a finite set for every positive integer $n$. 
Note that if $G$ satisfies the double coset property
with respect to some compact open subgroup $\caO$,  then it satisfies the double coset property with respect to every compact open subgroup (cf.~Proposition~\ref{prop:bdcosgrth}).
This property has the following consequence.
\begin{fact}
\label{fcsg:fin}
A discrete group $G$ has the double coset property if, and only if, $G$ is finite.
\end{fact}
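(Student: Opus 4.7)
The plan is to reduce the claim to a direct computation by invoking Proposition~\ref{prop:bdcosgrth}, which guarantees that the double coset property is independent of the choice of compact open subgroup. For a discrete group $G$ the trivial subgroup $\{1\}$ is compact and open, so it suffices to test the property with $\caO=\{1\}$.

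With this choice, $\mu_{\{1\}}$ is the counting measure on $G$, and every $\caO$-double coset collapses to a singleton $\{g\}$ of measure $1$. Hence one may take $\caR=G$, so that $\caR(1)=G$ while $\caR(n)=\emptyset$ for $n\geq 2$. The double coset property therefore holds if, and only if, $\caR(1)=G$ is finite, which proves the nontrivial direction. Conversely, if $G$ is finite then every set of double coset representatives is automatically finite, so the property holds for free. The only potential obstacle is having Proposition~\ref{prop:bdcosgrth} available; once that independence result is in hand, the fact follows from the identification of $\mu_{\{1\}}$ with the counting measure.
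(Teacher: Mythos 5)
Your argument is correct. The paper states this Fact without proof, so there is nothing to compare it against line by line; your route is the natural one and is logically sound: Proposition~\ref{prop:bdcosgrth} is proved independently of the Fact (no circularity), it lets you test the property at the single subgroup $\caO=\{1\}$, and there $\mu_{\{1\}}$ is the counting measure, every double coset is a singleton of measure $1$, so $\caR(1)=G$ and $\caR(n)=\emptyset$ for $n\geq 2$; hence the property holds precisely when $G$ is finite, and the converse is trivial. The only remark worth making is that the appeal to Proposition~\ref{prop:bdcosgrth} is not actually needed: for an arbitrary compact open subgroup $\caO$ of a discrete group, $\caO$ is a finite subgroup and $\mu_\caO(\caO g\caO)=|\caO g\caO|/|\caO|\leq |\caO|$ for every $g$, so only the finitely many values $n\leq|\caO|$ can occur; if each $\caR(n)$ were finite, then $\caR$ would be finite and $G=\bigcup_{r\in\caR}\caO r\caO$ would be a finite union of finite sets, hence finite. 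This direct argument proves the Fact for every choice of $\caO$ simultaneously and keeps the statement self-contained within the introduction, but your version is perfectly acceptable.
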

For a t.d.l.c.~group $G$ with the double coset property and any compact open subgroup $\caO$ of $G$ one defines the formal Dirichlet series $\zeta_{_{G,\caO}}(s)$ by
\begin{equation*}
\label{def:dirc}
	\zeta_{_{G,\caO}}(s)=\sum_{n\geq 1} |\caR(n)| 
	n^{-s}=\sum_{r\in\caR}\mu_\caO(\caO 
	r\caO)^{-s}.
\end{equation*}
Our interest in this formal Dirichlet series arose from the following result.
\begin{thmE}[\protect{cf.~Proposition~\ref{prop:zeta build} and Theorem~\ref{thm:PJ}}]
   Let $\Delta$ be a locally finite building of type $(W,S)$ and assume that $\Delta$ has uniform thickness $q+1$.  Let $G$ be a t.d.l.c.~group acting Weyl-transitively on $\Delta$ with compact open stabilizers. Hence,
for every chamber stabilizer $\caO$ in $G$ one has
   \begin{equation}\label{eq:zeta build}
   \zeta_{_{G,\caO}}(s)=\sum_{w\in W}\mu_{\caO}(\caO 
	g_w\caO)^{-s}=\gamma_{_{W,S}}(q^{-s}).
 \end{equation} 
	Moreover, for every subgroup $\caO$ of $G$ which is  either a chamber stabilizer or a spherical parabolic subgroup, one has that
	$\zeta_{_{G,\caO}}(s)$ defines a  meromorphic function 
	$\tzeta_{_{G,\caO}}\colon\C\to \C\cup\{\infty\}$ of the complex plane and 
 \begin{equation}\label{eq:chi zeta}
     \tchi(G)=\frac{1}{\tzeta_{_{G,\caO}}(-1)}\cdot\mu_{\caO}.
     \end{equation}
\end{thmE}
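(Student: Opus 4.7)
The plan is to split the proof into two strands: establishing the Dirichlet-series formula \eqref{eq:zeta build} from the combinatorics of a thick building, and then deducing the identity \eqref{eq:chi zeta} by combining Theorem~B with a Poincar\'e-series identity for the Coxeter system $(W,S)$.

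For \eqref{eq:zeta build} I would invoke the Bruhat-type decomposition $G=\bigsqcup_{w\in W}\caO g_w\caO$ arising from Weyl-transitivity. The map $g\mapsto g\cdot C$, where $C$ is the chamber stabilised by $\caO$, induces an $\caO$-equivariant bijection between $\caO g_w\caO/\caO$ and the set of chambers of $\Delta$ at Weyl-distance $w$ from $C$. Uniform thickness $q+1$ forces this set to have exactly $q^{\ell(w)}$ elements, so that $\mu_\caO(\caO g_w\caO)=q^{\ell(w)}$ and
\begin{equation*}
\zeta_{_{G,\caO}}(s)=\sum_{w\in W}q^{-s\ell(w)}=\gamma_{_{W,S}}(q^{-s}).
\end{equation*}
Meromorphicity of $\tzeta_{_{G,\caO}}$ is then immediate from the classical rationality of the Coxeter growth series $\gamma_{_{W,S}}(t)\in\Q(t)$.

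For the Euler--Poincar\'e identity I would apply Theorem~B to the $G$-action on $\Delta$. In the non-spherical case of principal interest, $\Delta$ has contractible geometric realisation; the $G$-orbits on $k$-dimensional simplices are indexed by subsets $T\subseteq S$ with $|T|=k+1$, a standard representative of type $T$ being stabilised by the standard parabolic $\caO_{S\setminus T}$, for which $\mu_{\caO_{S\setminus T}}=|W_{S\setminus T}|_q^{-1}\cdot\mu_\caO$ by $[\caO_{S\setminus T}:\caO]=|W_{S\setminus T}|_q$. Substituting and re-indexing by $J=S\setminus T$ yields
\begin{equation*}
\tchi_G=\Big(\sum_{J\subsetneq S}(-1)^{|S|-|J|-1}\,|W_J|_q^{-1}\Big)\cdot\mu_\caO.
\end{equation*}
Equating the bracketed sum with $\gamma_{_{W,S}}(q)^{-1}=|W|_q^{-1}$ reduces \eqref{eq:chi zeta} to the Poincar\'e-series identity $\sum_{J\subseteq S}(-1)^{|J|}\,|W_J|_q^{-1}=0$.

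The main obstacle is precisely the verification of this Poincar\'e-series identity for the infinite Coxeter systems at hand, essentially a Steinberg-type alternating-sum formula. Once it is in hand, the case where $\caO$ is a spherical standard parabolic subgroup $\caO_J$ follows by the same pattern: the $\caO_J$-double cosets in $G$ are parametrised by $W_J\backslash W/W_J$, the rescaling $\mu_{\caO_J}=|W_J|_q^{-1}\mu_\caO$ keeps $\zeta_{_{G,\caO_J}}(s)$ a rational function of $q^{-s}$, and the same Poincar\'e-series identity, applied after rescaling, recovers \eqref{eq:chi zeta}.
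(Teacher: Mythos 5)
The first strand of your argument (the chamber--stabilizer case of \eqref{eq:zeta build}) is correct and is essentially the paper's: $\mu_\caO(\caO g_w\caO)=q^{\ell(w)}$ by uniform thickness, hence $\zeta_{_{G,\caO}}(s)=\gamma_{_{W,S}}(q^{-s})$, and rationality of the Coxeter growth series gives the meromorphic continuation. The gap is in your route to \eqref{eq:chi zeta}. You apply Theorem~B to a cell structure on $\Delta$ whose $G$-orbits of $k$-simplices are indexed by all $T\subseteq S$ with stabilizers the standard parabolics $\caO_{S\setminus T}$. Theorem~E makes no assumption that $(W,S)$ is affine, so for a general type the subgroups $\caO_J$ with $W_J$ infinite are \emph{not} compact: Theorem~B does not apply, $[\caO_J:\caO]$ is infinite, and the quantities $|W_J|_q^{-1}$ you insert are not measures of compact open subgroups (nor is contractibility of that realization the available statement). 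The paper circumvents this by working with the Davis realization, whose simplices are chains of \emph{spherical} residues, so all stabilizers are compact open and the complex is contractible; the orbit sum then runs over chains in $\euS(W,S)$, and the alternating sum one must evaluate is not your identity $\sum_{J\subseteq S}(-1)^{|J|}|W_J|_q^{-1}=0$ but the Charney--Davis identity $\sum_{T\in\euS(W,S)}\bigl(1-\chi(\mathrm{Lk}(T))\bigr)/\gamma_{_{W_T,T}}(q)=1/\tgamma_{_{W,S}}(q)$. This is exactly the ``main obstacle'' you defer, and in the generality required it does not take the Steinberg-type form you wrote (that form is meaningful only when every proper $J\subsetneq S$ is spherical, e.g.\ in the affine case). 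You also need unimodularity of $G$ to define $\tchi_G$ and to invoke Theorem~B; the paper quotes this from the literature on Weyl-transitive automorphism groups, while your proposal is silent on it.

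The spherical-parabolic case is essentially unproved in your sketch. The $P_J$-double cosets are indeed indexed by ${}^JW^J\cong W_J\backslash W/W_J$, but their volumes are not a uniform rescaling of the chamber-stabilizer ones: one has $\mu_{P_J}(P_Jg_xP_J)=|P_J:P_{Q(x)}|\,q^{\ell(x)}$ with $Q(x)=J\cap x^{-1}Jx$, obtained by decomposing $P_Jg_xP_J$ into $B$-double cosets and using the length additivity $\ell(yxz)=\ell(y)+\ell(x)+\ell(z)$ for $y\in W_J$, $x\in{}^JW^J$, $z\in{}^{Q(x)}W_J$. Hence $\zeta_{_{G,P_J}}(s)=\sum_{Q\subseteq J}|P_J:P_Q|^{-s}\,\gamma_{p_{_{Q,J}},S}(q^{-s})$, where $p_{_{Q,J}}=\{x\in{}^JW^J\mid J\cap x^{-1}Jx=Q\}$, and the meromorphic continuation hinges on the rationality of these partial growth series $\gamma_{p_{_{Q,J}},S}(t)$ --- a genuinely nontrivial input which the paper imports from the theory of a Boolean algebra of subsets of $W$ with rational Poincar\'e series. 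Finally, the value at $s=-1$ is computed from the factorization $\gamma_{_{W,S}}(t)=\gamma_{_{W_J,J}}(t)\sum_{Q\subseteq J}\gamma_{_{{}^{Q}W_J,J}}(t)\,\gamma_{p_{_{Q,J}},S}(t)$, yielding $\tzeta_{_{G,P_J}}(-1)=\tgamma_{_{W,S}}(q)/\gamma_{_{W_J,J}}(q)$ and then \eqref{eq:chi zeta} after rescaling the Haar measure. None of this is supplied by ``the same Poincar\'e-series identity, applied after rescaling'', so as it stands the second half of the theorem is not proved by your proposal.
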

Note that the latter result applies to topological Kac-Moody groups 
of Remy-Ronan type and to  split semisimple simply-connected algebraic groups (cf.~Example~\ref{ex:EPKM} and Remark~\ref{rem:chi build}). In the latter case more can be proved:
\begin{thmF}[cf.~\protect{Theorems~\ref{thm:p-rad}}]
Let $\boG$ be a simple simply-connected Chevalley group scheme and
 $K$  a non-archimedean locally compact field with
finite residue field. Let $G=\boG(K)$
and let $(\widetilde W,\widetilde S)$ be the associated affine Weyl group.
Then,
for every pro-$p$-radical $P^1_
J$ of a parahoric subgroup $P_J$, the series
$\zeta_{_{G,P^1_
J}}(s)$ defines a meromorphic function $\tzeta_{_{G,P^1_
J}}\colon\C\to\C\cup\{\infty\}$
of the complex plane.
Moreover, one has
\begin{equation*}
\label{eq:epid}
\tchi_G=\frac{1}{\tzeta_{_{G,P^1_
J}}(-1)}\cdot \mu_{P^1_
J}.
\end{equation*}
\end{thmF}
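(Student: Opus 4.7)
The plan is to bootstrap Theorem~E, which already covers chamber stabilisers and spherical parahoric subgroups of a group acting Weyl-transitively on a locally finite thick building, and then to transfer the statement from a parahoric $P_J$ to its normal pro-$p$-radical $P^1_J$. Since $\boG$ is a simple simply-connected Chevalley group scheme and $K$ is non-archimedean locally compact with residue field $k$ of size $q$, the group $G=\boG(K)$ acts strongly (hence Weyl-)transitively on its Bruhat--Tits building $\Delta$, a locally finite affine building of type $(\widetilde W,\widetilde S)$ of uniform thickness $q+1$, with compact open parahoric stabilisers. In particular, Theorem~E applies verbatim to the Iwahori $I$ and to every spherical parahoric $P_J$, yielding meromorphicity of $\tzeta_{_{G,P_J}}$ and the identity $\tchi_G=\tzeta_{_{G,P_J}}(-1)^{-1}\cdot\mu_{P_J}$.

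The core step is to relate $\zeta_{_{G,P^1_J}}(s)$ to $\zeta_{_{G,P_J}}(s)$. Since $P^1_J\triangleleft P_J$ with finite reductive quotient $L_J(k)=P_J/P^1_J$ of order a known polynomial in $q$, each $P_J$-double coset $P_J g P_J$ refines into a finite union of $P^1_J$-double cosets indexed by orbits of the $g$-twisted $L_J(k)\times L_J(k)$-action on itself. First I would compute these orbit sizes and the corresponding values $\mu_{P^1_J}(P^1_J u g v P^1_J)$ using $\mu_{P^1_J}=[P_J:P^1_J]\cdot\mu_{P_J}$ together with the Bruhat decomposition of the finite reductive group $L_J(k)$. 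Summing these contributions over a set of $P_J$-double coset representatives should produce an identity of the form
\[
\zeta_{_{G,P^1_J}}(s)=f_J(s)\cdot\zeta_{_{G,P_J}}(s),\qquad f_J(s)\in\Q(q^{-s}),
\]
with $f_J$ a rational function of $q^{-s}$ built from Poincar\'e-type polynomials associated to $L_J(k)$.

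Granting this factorisation, meromorphicity of $\tzeta_{_{G,P^1_J}}$ on $\C$ is immediate from the meromorphicity of $\tzeta_{_{G,P_J}}$ (Theorem~E) together with the rationality of $f_J$ in $q^{-s}$. The Euler--Poincar\'e identity then reduces, via the same application of Theorem~E to $P_J$, to the numerical check $f_J(-1)=[P_J:P^1_J]=|L_J(k)|$, which I would extract directly from the explicit shape of $f_J$ obtained in the previous step.

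The hard part will be the orbit analysis underpinning the factorisation: controlling the twisted $L_J(k)\times L_J(k)$-action on $L_J(k)$ uniformly over all double coset representatives $g\in G$. The subtlety is that $P_J\cap gP_Jg^{-1}$ is not itself a parahoric in general, so the usual parahoric calculus is unavailable; one must instead work inside the finite reductive quotient $L_J(k)$ and account for the permutation of roots induced by the element of the extended affine Weyl group represented by $g$. Once this combinatorial analysis is in place, the remainder of the proof is formal.
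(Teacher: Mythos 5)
Your reduction to Theorem~E is the right starting point, and your final arithmetic (the identity at $s=-1$ follows once one knows how $\tzeta_{_{G,P^1_J}}(-1)$ compares with $\tzeta_{_{G,P_J}}(-1)$, using $\mu_{P^1_J}=|P_J:P^1_J|\cdot\mu_{P_J}$) matches the paper's endgame in Theorem~\ref{thm:p-rad}. But the pivotal intermediate claim, a global factorisation $\zeta_{_{G,P^1_J}}(s)=f_J(s)\cdot\zeta_{_{G,P_J}}(s)$ with a single $f_J\in\Q(q^{-s})$, is false in general, and your route to meromorphy collapses with it. The refinement of $P_Jg_xP_J$ into $P^1_J$-double cosets (for $x\in{}^J\widetilde W^J$ of minimal length) consists of $|P_J:P_{Q(x)}|\cdot|P_J:P^1_{Q(x)}|$ cosets, all of the same $\mu_{P^1_J}$-measure (cf.\ Remark~\ref{rem:munorma} and Lemma~\ref{lemma:doublePJ1}), and by \eqref{eq:dPJ1} the ratio between the contribution of $P_Jg_xP_J$ to $\zeta_{_{G,P^1_J}}$ and to $\zeta_{_{G,P_J}}$ equals
\begin{equation*}
|P_J:P_{Q(x)}|^{\,1+s}\,|P_J:P^1_{Q(x)}|^{\,1+s}\,|P_J:P^1_J|^{-s},
\qquad Q(x)=J\cap x^{-1}Jx,
\end{equation*}
which genuinely depends on $Q(x)$ as soon as $J\neq\emptyset$ (it is constant only for $J=\emptyset$, i.e.\ for the pro-$p$ Iwahori). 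So summing over double cosets does not produce a common factor $f_J(s)$, and meromorphy of $\tzeta_{_{G,P^1_J}}$ cannot be inherited from Theorem~E in the way you propose. What does survive is that this ratio becomes the constant $|P_J:P^1_J|$ at $s=-1$, which is exactly how the paper gets $\tzeta_{_{G,P^1_J}}(-1)=|P_J:P^1_J|\,\tzeta_{_{G,P_J}}(-1)$ and hence the Euler--Poincar\'e identity; your "numerical check" is thus salvageable, but only after the analytic continuation has been established by other means.

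The paper's actual mechanism for both the double-coset count and the meromorphy is what you flagged as the open ``hard part'' but did not resolve. The structural input is Morris's lemmas, giving $P_{J\cap xJx^{-1}}=P^1_J(P_J\cap g_xP_Jg_x^{-1})$ and $P^1_{J\cap x^{-1}Jx}=P^1_J(P_J\cap g_x^{-1}P^1_Jg_x)$ for minimal double-coset representatives $x$; this is precisely the parahoric calculus that lets one parametrise the $P^1_J$-double cosets inside $P_Jg_xP_J$ (Lemma~\ref{lemma:doublePJ1}) without any twisted-orbit analysis in the finite reductive quotient $L_J(k)$. The resulting expression \eqref{eq:zetaPJ1b} is a finite sum over $Q\subseteq J$ of terms $|P_J:P_Q|\,|P_J:P^1_Q|\,|P^1_Q:P^1_J|^{-s}\,\gamma_{p_{_{Q,J}},\widetilde S}(q^{-s})$, and meromorphy (indeed rationality in $q^{-s}$) comes from Proposition~\ref{prop:pQJJrational} --- the rationality of the partial Poincar\'e series $\gamma_{p_{_{Q,J}},\widetilde S}(t)$ --- together with the fact that $|P^1_Q:P^1_J|$ is a power of $q$; none of this is supplied by Theorem~E. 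To repair your argument you would either have to import these ingredients, or genuinely carry out the $L_J(k)$-orbit analysis uniformly in $x$ and accept a $Q$-graded (not globally factorised) comparison with $\zeta_{_{G,P_J}}$, which is in effect the paper's proof.
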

\begin{rem}
\label{rem:iwa}
In Subsection~\ref{ss:algebraic} we deal with the special case $\caO=\Iw$, where $\Iw$ denotes  the Iwahori subgroup.
The Bruhat decomposition and Bott's theorem imply that
\begin{equation*}
\tzeta_{_{G,\Iw}}(s)=\tgamma_{_{W,S}}(q^{-s})\prod_{1\leq i\leq n}
\frac{1}{1-q^{-s(d_i-1)}}
\end{equation*}
where $n=\rk(G)$ is the toral rank of $G$, $(W,S)$ is the spherical Weyl group of $G$,
and $(d_i)_{1\leq i\leq n}$ are the degrees of the finite reflection group $(W,S)$.
In particular, for $s\not=0$ the meromorphic function $\tzeta_{_{G,\Iw}}(s)$ satisfies the functional equation
\begin{equation*}
\label{eq:funceq}
\tzeta_{_{G,\Iw}}(-s)=(-1)^{\rk(G)}\cdot\tzeta_{_{G,\Iw}}(s).
\end{equation*}
Moreover, by Theorem~E, one deduces that
 $\tchi_G\in\boh^+(G)$ if $n$ even and $\tchi_G\in\boh^-(G)$ if $n$ odd.
\end{rem}
The results in the last section can be seen as an attempt to give at least a partial answer to the following 
question.

\begin{quesG}
\textup{(a)} For which t.d.l.c.~groups $G$ with the double coset property and for which compact open subgroups $\caO\subseteq G$ does $\zeta_{_{G,\caO}}(s)$ given by \eqref{def:dirc} define a meromorphic function 
$\tzeta_{_{G,\caO}}\colon\C\to\bC$?

\noindent
\textup{(b)} For which unimodular t.d.l.c.~groups $G$ with the double coset property and for which compact open subgroups 
$\caO\subseteq G$
does $\zeta_{_{G,\caO}}(s)$ define a meromorphic function 
$\tzeta_{_{G,\caO}}\colon\C\to\bC$ satisfying $\tchi_G=\tfrac{1}{\tzeta_{_{G,\caO}}(-1)}\cdot\mu_\caO$?
\end{quesG}
\begin{rem}
\label{rem:quesG}
Note that Question~G  has an affirmative answer whenever $G$ is a compact t.d.l.c.~group. Indeed, for $\ca O=G$, $\zeta_{_{G,\ca O}}(s)=\mu_{\ca O}(\caO 1_G\ca O)^{-s}$. Hence, $\tzeta_{_{G,\ca O}}(s)$ is the constant function with value 1 and $\tchi_G=1\cdot\mu_{\ca O}$ (cf. Remark~\ref{rem:comptdlc}).
\end{rem}

\begin{example}
\label{ex:tree}
Let $\euT=\euT_{d+1}$ be a regular locally-finite tree for which every vertex $v\in\euV(\euT)$
has precisely $d+1$ neighbours. We consider the vertices of $\euT$ to be colored by red and blue,
i.e., $\euV(\euT)=\euV(\euT)_r\sqcup\euV(\euT)_b$. All neighbours of a red vertex are blue vertices
and vice versa. Let $A=\Aut(\euT)^\circ$ be the group of color--Preserving automorphisms
of $\euT$. Then $\Aut(\euT)^\circ=\pi_1(\gog,\Lambda,x_0)$, where 
$\Lambda\colon\xymatrix{\bullet_r\ar@{-}[r]^{\eue}&\bullet_b}$. Thus, by \eqref{eq:fund}, one has
\begin{equation}
\label{eq:EPtree}
\tchi_A=\mu_{\gog_{\bullet_b}}+\mu_{\gog_{\bullet_r}}-\mu_{\gog_\eue}=(\tfrac{2}{d+1}-1)\cdot\mu_{\gog_\eue}
=\tfrac{1-d}{1+d}\cdot\mu_{\gog_\eue}.
\end{equation}

\noindent
(a) For $\caO=\gog_v$, where $v\in\{\bullet_r,\bullet_b\}$, one has 
\begin{equation}
\label{eq:treeA2}
\tzeta_{_{A,\gog_v}}(s)=1+\frac{(1+d)^{-s}}{d^s-d^{-s}}.
\end{equation}
For every $g\in \caR$, by the orbit-stabiliser theorem, the integer  $\mu_{\ca O}(\ca Og\caO)=|\caO\colon\ca O\cap g\caO g^{-1}|$ is the cardinality of the $\caO$-orbit of the vertex $gv$. In particular, if $g\neq 1$ then
\begin{equation*}
\mu_{\ca O}(\ca Og\caO)=|\{w\in\euV(\euT)\mid \mathrm{dist}_{\ca T}(v,w)=2k\}|=(d+1)d^{2k-1}
\end{equation*}
where  $2k=\mathrm{dist}_{\ca T}(v,gv)$ and $k\geq1$. Notice that each $gv$ is at even distance from $v$ since the tree has two colors and $g$ is color--Preserving.
Therefore,
\begin{eqnarray*}\displaystyle
\zeta_{_{A,\gog_v}}(s)=&\displaystyle\mu_{\caO}(\caO)^{-s}+\sum_{g\in\caR\setminus\{1\}}\mu_\caO(\caO g\caO)^{-s}&=\displaystyle 1+\sum_{k\geq 1}\big((d+1)d^{2k-1}\big)^{-s}=
\\
=&\displaystyle 1+(1+d)^{-s}d^{s}\sum_{k\geq1}(d^{-2s})^k.&
\end{eqnarray*}
Within the radius of convergence, the sum defines a meromorphic function $\tzeta_{A,\gog_v}$ of the complex plane given by
\begin{equation}\displaystyle
1+\frac{(1+d)^{-s}}{d^{-s}}\Big(\frac{1}{1-d^{-2s}}-1\Big)
=1+\frac{d^{-s}(1+d)^{-s}}{1-d^{-2s}}=1+\frac{(1+d)^{-s}}{d^s-d^{-s}},
\end{equation}
i.e., \eqref{eq:treeA2} holds.
 Moreover,
\begin{equation}
\tzeta_{_{A,\gog_v}}(-1)=1+\frac{1+d}{d^{-1}-d}=\frac{d^{-1}+1}{d^{-1}-d}=\frac{1}{(d^{-1}-1)d}
=\frac{1}{1-d}
\end{equation}
As $\mu_{\gog_v}=\tfrac{1}{d+1}\cdot\mu_{\gog_\eue}$ this implies that 
(b) of Question G  holds for $(A,\gog_v)$.

\noindent
(b) For $\caO=\gog_\eue$ one obtains 
\begin{equation}
\label{eq:treeA1}
\tzeta_{_{A,\gog_\eue}}(s)=\frac{1+d^{-s}}{1-d^{-s}},
\end{equation}
and hence by \eqref{eq:EPtree}, the relation (b) of Question G also holds for $(A,\gog_\eue)$. Indeed, for every $g\in \caR$, the integer  $\mu_{\ca O}(\ca Og\caO)=|\caO\colon\ca O\cap g\caO g^{-1}|$ is the cardinality of the $\caO$-orbit of the edge $g\eue$. Let $\euT_r$ and $\euT_b$ be the two $d$-regular rooted trees obtained from $\euT$ after removing the edge $\eue$. If $g\neq 1$, the edge $g\eue$ (together with its own $\caO$-orbit) belongs either to $\euT_r$ or $\euT_b$. Accordingly, the set of $\ca O$-double coset representatives decomposes as $$\ca R=\{1\}\sqcup\ca R_r\sqcup\ca R_b.$$ In particular, if $g\in\ca R_\bullet$ (with $\bullet\in\{r,b\}$), then $\mu_{\ca O}(\ca Og\caO)=d^k$,
where  $k\geq 1$ is the level of the rooted tree $\euT_\bullet$ containing $g\eue$.
Therefore,
\begin{align*}\displaystyle
\zeta_{_{A,\gog_v}}(s)&=\displaystyle\mu_{\caO}(\caO)^{-s}+\sum_{g\in\caR_r}\mu_\caO(\caO g\caO)^{-s}+\sum_{g\in\caR_b}\mu_\caO(\caO g\caO)^{-s}=\\
&=\displaystyle 1+2\sum_{k\geq 1}\big(d^{k}\big)^{-s}=\displaystyle 1+2\sum_{k\geq 1}\big(d^{-s}\big)^{k}.
\end{align*}
Within the radius of convergence, the sum defines a meromorphic function given by
\begin{equation}\displaystyle
1+2\Big(\frac{1}{1-d^{-s}}-1\Big)
=1+\frac{2d^{-s}}{1-d^{-s}}=\frac{1+d^{-s}}{1-d^{-s}},
\end{equation}
i.e., \eqref{eq:treeA1} holds.

A deep investigation of the double-coset zeta functions for groups acting on trees has been performed by B.~Marchionna in~\cite{M24}.
\end{example}

Although Question~G might be very difficult to answer in general, the following more specific question might be the starting point for further investigations.

\begin{quesH}
For $G$  p-adic analytic and $\caO\subseteq G$ compact open subgroup does part (a) of Question G has an affirmative answer in general?
\end{quesH}


\subsection*{Acknowledgement} We thank Bianca Marchionna for some useful comments on a first draft of our manuscript. The first named
author was supported by the Deutsche Forschungsgemeinschaft (DFG, German Research Foundation) – SFB-TRR 358/1 2023
– 491392403. 
The first and third authors are members of the Gruppo Nazionale per le Strutture Algebriche, Geometriche e le loro
Applicazioni (GNSAGA), which is part of the Istituto Nazionale di Alta Matematica (INdAM).

\section{Idempotents in $C^\ast$-algebras}
\label{s:Cidem}

\subsection{$C^\ast$-algebras}
\label{ss:cstar}
A $\C$-algebra $A$ together with a {\it norm} $\nor \argu\nor\colon A\to\R_0^+$
and an {\it involution} $\argu^\ast\colon A^{\op}\to A$ is said to be a {\it $C^\ast$-algebra} 
(cf.~\cite[\S 1.1]{sakai:cast}) if
\begin{enumerate}[label={(C\arabic*})]
\item$(A,\nor \argu\nor)$ is a Banach space;
\item$\nor x y\nor\leq \nor x\nor \nor y\nor$ for all $x,y\in A$;
\item $\argu^\ast$ is an isomorphism of $\R$-algebras satisfying 
$$(\lambda x)^\ast= \bar{\lambda} x^\ast,\quad\forall\lambda\in\C, x\in A;$$
\item $\nor x^\ast x\nor=\nor x\nor^2$ for all $x\in A$.
\end{enumerate}

In a $C^\ast$-algebra $A$ one has $\nor x\nor=\nor x^\ast\nor$ for all $x\in A$
(cf.~\cite[Lemma~1.1.6]{sakai:cast}).
If the $C^\ast$-algebra $A$ contains a unit $1\in A$, it is called {\it unital}.
For such a $C^\ast$-algebra $A$, the {\it spectrum} of an element $a\in A$ is defined by
\begin{equation}
\label{eq:spec}
\spect_A(a)=\{\,\lambda\in\C\mid a-\lambda 1\ \text{is not invertible}\,\}.
\end{equation}
Elements $a\in A$ satisfying $a=a^\ast$ are called {\it self-adjoint},
and elements $a\in A$ which are self-adjoint satisfying $\spect_A(a)\subseteq\R^+_0$
are said to be {\it positive}.
By I.~Kaplansky's theorem, one knows that an element $a\in A$ is positive if, and only if,
there exists $b\in A$ such that $a=b^\ast b$ (cf.~\cite[Thm.~1.4.4]{sakai:cast}).
As a consequence, in a unital $C^\ast$-algebra all elements of the form
$1+b^\ast b$, $b\in A$, are invertible.
We put $A^+=\{\,a\in A\mid a\ \text{positive}\,\}$.
\subsection{$C^\ast$-algebras acting on a Hilbert space}
\label{ss:CHil}

It is well known, that if $(V,\langle.,.\rangle)$ is a complex Hilbert space, the algebra $\caB(V)$ of
bounded linear operators from $V$ to $V$ together with the adjoint map $\argu^\ast\colon \caB(V)^{\op}
\to\caB(V)$ and the operator norm $\nor\argu\nor\colon \caB(V)\to\R_0^+$
is a $C^\ast$-algebra (cf.~\cite[\S 1.15]{sakai:cast}).

Let $A$ be a unital $C^\ast$-algebra, and let $V$ be a complex Hilbert space
which is also a left $A$-module. Then we say that $A$ is {\it acting on the complex Hilbert space} $V$ if the
induced map $\sigma\colon A\to\caB(V)$ is a $\ast$-homomorphism of $C^\ast$-algebras satisfying
\begin{equation}
\label{eq:Hact}
\nor \sigma(a)\nor=\nor a\nor
\end{equation}
for all $a\in A$. Obviously, \eqref{eq:Hact} implies that $\sigma$ must be injective.

\subsection{Trace functions and special elements}
\label{ss:special}
A {\it trace function} of a unital $C^\ast$-algebra $A$ is defined as a $\C$-linear function $\tau\colon A\to\C$ satisfying $$\tau(ab)=\tau(ba),\quad\text{for all $a,b\in A$.}$$

Suppose $A$ is acting on the complex Hilbert space $(V,\langle.,.\rangle)$.
An element $\delta\in V$ is said to be {\it special} if the following are satisfied:
\begin{enumerate}[label={(S\arabic*})]
\item\label{eq:S1} $\langle\delta,\delta\rangle=1$;
\item\label{eq:S2} the canonical map $\rho_\delta\colon A\to V$, $\rho_\delta(a)=a.\delta$, $a\in A$, is injective;
\item\label{eq:S3} $\langle \rho_\delta(ab),\delta\rangle=\langle \rho_\delta(ba),\delta\rangle$ for all $a,b\in A$.
\end{enumerate}
A special element $\delta\in V$ defines the trace function 
$\tau_\delta\colon A\to\C$ by  $$\tau_\delta(a)=\langle \rho_\delta(a),\delta\rangle,\quad\text{for every $a\in A$.}$$
\begin{prop}
\label{prop:spec}
Let $A$ be a unital $C^\ast$-algebra acting on the complex Hilbert space $(V,\langle.,.\rangle)$,
and let $\delta\in V$ be a special element. Then
\begin{itemize}
\item[(a)] $\tau_\delta(1)=1$;
\item[(b)] $\tau_\delta(ab)=\tau_\delta(ba)$ for all $a,b\in A$;
\item[(c)] $\tau_\delta(a)\in\R^+_0$ for all $a\in A^+$;
\item[(d)] For $a\in A^+$ one has
$\tau_\delta(a)=0$ if, and only if, $a=0$.
\end{itemize} 
\end{prop}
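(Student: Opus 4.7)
The plan is to verify the four items (a)--(d) in turn, each being a short calculation from axioms (S1)--(S3) together with Kaplansky's characterization of positive elements recalled in Subsection~\ref{ss:cstar} (every $a\in A^+$ is of the form $a=b^\ast b$ for some $b\in A$). The only fact beyond the three axioms and Kaplansky that I would need to invoke explicitly is that the representation $\sigma\colon A\to\caB(V)$ is a $\ast$-homomorphism, so that $\sigma(b^\ast)=\sigma(b)^\ast$ as adjoints of Hilbert-space operators.

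Items (a) and (b) require essentially no work. For (a) I would unfold the definitions:
\begin{equation*}
\tau_\delta(1)=\langle \rho_\delta(1),\delta\rangle=\langle 1\cdot\delta,\delta\rangle=\langle\delta,\delta\rangle,
\end{equation*}
which equals $1$ by \ref{eq:S1}. Part (b) is, after unfolding $\tau_\delta$, a verbatim restatement of \ref{eq:S3}.

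Item (c) is the heart of the proposition. Given $a\in A^+$, Kaplansky's theorem produces $b\in A$ with $a=b^\ast b$. Since $\sigma$ is a $\ast$-homomorphism, the action of $a$ on $\delta$ factors as $a\cdot\delta=b^\ast\cdot(b\cdot\delta)$ with $\sigma(b)^\ast$ being the Hilbert-space adjoint of $\sigma(b)$. Therefore
\begin{equation*}
\tau_\delta(a)=\langle b^\ast b\cdot\delta,\delta\rangle=\langle b\cdot\delta,b\cdot\delta\rangle=\nor b\cdot\delta\nor^2\geq 0,
\end{equation*}
giving the desired positivity.

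For item (d), the ``if'' direction is immediate. For the ``only if'' direction, assume $a\in A^+$ with $\tau_\delta(a)=0$ and again write $a=b^\ast b$. The computation from (c) yields $\nor b\cdot\delta\nor^2=0$, hence $\rho_\delta(b)=b\cdot\delta=0$. The injectivity of $\rho_\delta$ supplied by \ref{eq:S2} then forces $b=0$, and consequently $a=b^\ast b=0$. No step looks like a serious obstacle: the proposition is in essence a piece of axiom-tracking, with Kaplansky's theorem serving as the bridge between algebraic positivity in $A$ and the geometric positivity visible once the action on $V$ is taken into account.
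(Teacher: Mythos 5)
Your proof is correct and follows the same route as the paper: (a) and (b) are direct unfoldings of (S1) and (S3), while (c) and (d) use Kaplansky's decomposition $a=b^\ast b$ to get $\tau_\delta(a)=\langle b\cdot\delta,b\cdot\delta\rangle\geq 0$ and then invoke injectivity of $\rho_\delta$ from (S2). Your explicit remark that the $\ast$-homomorphism property of $\sigma$ is what justifies moving $b^\ast$ across the inner product is a point the paper leaves implicit, but the argument is otherwise identical.
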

\begin{proof}
(a) is a consequence of property \ref{eq:S1} of a special element, and (b) is a consequence of \ref{eq:S3}.
(c) By Kaplansky's theorem, for $a\in A^+$ there exists $b\in A$ such that $a=b^\ast b$. Hence
\begin{equation}
\label{eq:specprop}
\tau_\delta(a)=\langle (b^\ast b).\delta,\delta\rangle=\langle b.\delta,b.\delta\rangle\in\R^+_0.
\end{equation}
(d) By \eqref{eq:specprop}, $\tau_\delta(a)=0$ if, and only if $\rho_\delta(b)=b.\delta=0$.
Hence, by property \ref{eq:S2}, this is equivalent to $b=0$ and, therefore, $a=0$.
\end{proof}
A trace function $\tau\colon A\to\C$ of a unital $C^\ast$-algebra $A$ satisfying property (c) of Proposition~\ref{prop:spec}
is called {\it positive}; if it satisfies property (d), then it is called {\it faithful}.

\subsection{Matrices over a unital $C^*$-algebra} Let $A$ be a unital $C^*$-algebra acting on the complex Hilbert space $(V,\langle.,.\rangle)$. There is a natural structure on the set $M_n(A)$ of $n\times n$ matrices over $A$ that makes it a unital $C^*$-algebra acting on a complex Hilbert space. The algebraic structure on $M_n(A)$ corresponds to the usual one and the involution $\argu^\ast\colon M_n(A)^{\op}\to M_n(A)$ is defined by 
$$[m_{jk}]^*=[m^*_{kj}],\quad j,k\in\{1,\ldots,n\},$$
for every matrix $M=[m_{jk}]\in M_n(A)$. Moreover, the involutive algebra $M_n(A)$ acts on the $n$-fold direct sum $V\oplus\cdots\oplus V$ via the usual matrix action on column vectors. Therefore, the linear injective $*$-homomorphism
$$\bar\sigma\colon M_n(A)\to \ca B(V\oplus\cdots\oplus V)$$
defines the norm on $M_n(A)$ by $||M||=||\bar\sigma(M)||,\ M\in M_n(A)$.
\begin{fact}\label{fact:matrix tr}
 Let $\tau\colon A\to\C$ be a trace function on the unital $C^*$-algebra $A$. The map
 $$\bar\tau\colon M_n(A)\to\C,\quad [m_{ij}]\mapsto \sum_{i=1}^n \tau(m_{ii}),$$
 is a  trace function on the unital $C^*$-algebra $M_n(A)$. Moreover, $\bar\tau$ is faithful and positive whenever $\tau$ is.
\end{fact}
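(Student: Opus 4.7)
The proof splits into four verifications, three of which are essentially bookkeeping, while the positivity step requires the key input from Kaplansky's theorem.

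First, $\C$-linearity of $\bar\tau$ is immediate from the definition, since each $M\mapsto m_{ii}$ is linear and $\tau$ is linear. For the trace identity, given $M=[m_{ij}]$ and $N=[n_{ij}]$ in $M_n(A)$, I would compute
\begin{equation*}
\bar\tau(MN)=\sum_{i=1}^n\tau\Bigl(\sum_{k=1}^n m_{ik}n_{ki}\Bigr)=\sum_{i,k}\tau(m_{ik}n_{ki})=\sum_{i,k}\tau(n_{ki}m_{ik})=\bar\tau(NM),
\end{equation*}
where the third equality uses the trace property of $\tau$ and the last uses Fubini on the finite double sum together with relabelling.

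Next, assuming $\tau$ is positive, I would show $\bar\tau$ is positive as follows. For $M\in M_n(A)^+$, by Kaplansky's theorem (applied in the unital $C^*$-algebra $M_n(A)$, as stated in \S\ref{ss:cstar}) there is $N=[n_{ij}]\in M_n(A)$ with $M=N^\ast N$. The diagonal entries are
\begin{equation*}
(N^\ast N)_{ii}=\sum_{k=1}^n (N^\ast)_{ik}\,N_{ki}=\sum_{k=1}^n n_{ki}^\ast n_{ki}\in A^+,
\end{equation*}
each summand being positive in $A$ (again by Kaplansky). Applying $\tau$ and summing over $i$ yields $\bar\tau(M)=\sum_{i,k}\tau(n_{ki}^\ast n_{ki})\geq 0$.

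Finally, for faithfulness I would start from the same expression $\bar\tau(M)=\sum_{i,k}\tau(n_{ki}^\ast n_{ki})$ with $M=N^\ast N\in M_n(A)^+$. If $\bar\tau(M)=0$, then since $\tau$ is positive each term $\tau(n_{ki}^\ast n_{ki})$ is non-negative, so all of them vanish. Faithfulness of $\tau$ forces $n_{ki}^\ast n_{ki}=0$ for every $i,k$, and the $C^\ast$-identity (C4) then gives $\nor n_{ki}\nor^2=\nor n_{ki}^\ast n_{ki}\nor=0$, whence $n_{ki}=0$. Thus $N=0$ and $M=N^\ast N=0$. The only genuine obstacle is ensuring that Kaplansky's characterisation of positive elements is available in $M_n(A)$, but this is granted by the standing identification of $M_n(A)$ as a unital $C^\ast$-algebra acting on $V^{\oplus n}$ established just before the statement.
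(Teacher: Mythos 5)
Your proof is correct. The paper states this result as a Fact without any proof, and your verification — linearity and the trace identity by direct computation on the double sum, and positivity/faithfulness by writing a positive $M\in M_n(A)^+$ as $N^\ast N$ via Kaplansky's characterisation and using $(N^\ast N)_{ii}=\sum_{k} n_{ki}^\ast n_{ki}$ together with the $C^\ast$-identity — is exactly the standard argument the authors leave to the reader; the one prerequisite you flag, that $M_n(A)$ is itself a unital $C^\ast$-algebra (so Kaplansky's theorem applies there), is indeed supplied by the construction in the paragraph immediately preceding the statement.
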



\subsection{Faithful positive traces and idempotents}
\label{ss:idem}
The following theorem - which is basically due to
M.~Burger and A.~Valette - is essential for our purpose.
Although they did not state it in this general form, the proof
of the following is identical to their proof of \cite[Thm.~2.1]{bv:idem} which relies on the following lemma due to I.~Kaplansky.
\begin{lem}[\protect{\cite[Theorem~26]{kap}}]\label{lem:kap}
For every idempotent $e$ of a unital $C^*$-algebra there exists a self-adjoint idempotent $f$ such that $ef=f$ and $fe=e$.
\end{lem}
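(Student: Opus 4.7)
The plan is to construct the projection $f$ explicitly as $f := ee^\ast h^{-1}$ for a carefully chosen positive invertible self-adjoint element $h \in A$ that commutes with both $e$ and $e^\ast$.

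First I would set $b := e - e^\ast$ and $h := 1 + b^\ast b = 1 + (e^\ast - e)(e - e^\ast)$. As recalled in $\S$\ref{ss:cstar}, any element of the form $1 + b^\ast b$ is invertible in a unital $C^\ast$-algebra; by construction $h$ is also self-adjoint and positive. Using the idempotent identities $e^2 = e$ and $(e^\ast)^2 = e^\ast$, the defining expression collapses into the manifestly symmetric form
\begin{equation*}
h = 1 + e^\ast e + ee^\ast - e - e^\ast.
\end{equation*}
A short direct computation from this expansion yields $eh = he = ee^\ast e$, so $e$ — and hence by applying the involution $e^\ast$ — commutes with $h$. Combining these identities gives $(ee^\ast) h = h(ee^\ast) = (ee^\ast)^2$, which is the key algebraic identity driving the rest of the proof.

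With these commutation relations in hand, I would verify the four required properties of $f := ee^\ast h^{-1}$ in turn. Self-adjointness of $f$ follows from the self-adjointness of $ee^\ast$ and of $h^{-1}$ together with the fact that they commute. The equality $f^2 = f$ reduces, via $[ee^\ast, h^{-1}] = 0$, to $(ee^\ast)^2 = (ee^\ast) h$, which is exactly what has already been established. The relation $ef = f$ is immediate from $e \cdot ee^\ast = ee^\ast$ (using $e^2 = e$). Finally, $fe = e$ follows from the commutation $e h^{-1} = h^{-1} e$ together with the identity $ee^\ast e = eh$, after cancelling $h^{-1}$.

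The only creative step, and hence the main obstacle, is the choice of $h$: one must find a positive invertible element (so that a form like $1 + b^\ast b$ is natural, in view of Kaplansky's positivity criterion recalled in Section~\ref{s:Cidem}) whose expansion via $e^2 = e$ both commutes with $e$ and satisfies the magic identity $(ee^\ast) h = (ee^\ast)^2$. The particular combination $1 + (e^\ast - e)(e - e^\ast)$ works precisely because the cross terms $e^2$ and $(e^\ast)^2$ collapse under idempotency, producing the symmetric expression above. Once this $h$ is identified, the four verifications are direct algebraic manipulations with no further obstacle.
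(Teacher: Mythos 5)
Your proof is correct and is essentially the paper's own argument: your element $h=1+(e^\ast-e)(e-e^\ast)$ coincides with the paper's $z=1+(e^\ast-e)^\ast(e^\ast-e)$, and the projection $f=ee^\ast h^{-1}$ together with the commutation identities $eh=he=ee^\ast e$ and $(ee^\ast)h=(ee^\ast)^2$ reproduces exactly the verification given there. No gaps.
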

\begin{proof}
Let $e$ be an idempotent and set
\begin{equation}
\label{eq:alma1}
z=1+(e^\ast-e)^\ast(e^\ast-e).
\end{equation}
In particular, $z$ is self-adjoint and invertible.
Put $f=e e^\ast z^{-1}$.
A straightforward computation shows that $ze=ee^*e=ez$. Hence,
$z$ commutes with $e$, and thus also with $e^\ast$.
This implies that $f=ee^*z^{-1}=z^{-1}ee^*=f^\ast$. Moreover,
\begin{equation}
\label{eq:alma2}
f^2=ee^\ast z^{-1}ee^\ast z^{-1}=z^{-1}(ee^\ast e)e^\ast z^{-1}=z^{-1}(ze)e^\ast z^{-1}=f,
\end{equation}
 i.e., $f$ is a self-adjoint idempotent. Clearly, $ef=f$. On the other hand, $fe=ee^*z^{-1}e=(ee^*e)z^{-1}=(ez)z^{-1}=e$.
\end{proof}

\begin{thm}[M.~Burger \& A.~Vallette]
\label{thm:traceidem}
Let $A$ be a unital $C^\ast$-algebra with a positive and faithful trace function $\tau\colon A\to\C$. For every  $e\in A$ idempotent, one has  $\tau(e)\in[0,\tau(1)]\subset\R^+_0$. Moreover, $\tau(e)=0$ if and only if $e=0$. 
\end{thm}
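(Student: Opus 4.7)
The plan is to reduce the statement about an arbitrary idempotent $e$ to a statement about a self-adjoint idempotent, where the positive-and-faithful trace hypothesis applies directly. The key tool is Kaplansky's Lemma~\ref{lem:kap}, which produces a self-adjoint idempotent $f\in A$ with $ef=f$ and $fe=e$.

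First I would use the trace property to compute
\begin{equation*}
\tau(e)=\tau(fe)=\tau(ef)=\tau(f),
\end{equation*}
so it suffices to prove the conclusion for the self-adjoint idempotent $f$. Since $f=f^\ast=f^2=f^\ast f$, Kaplansky's characterization of positive elements (cited in Subsection~\ref{ss:cstar}) gives $f\in A^+$, and therefore $\tau(f)\in\R^+_0$ by positivity of $\tau$. For the upper bound I would observe that $1-f$ is also self-adjoint and idempotent, hence $1-f=(1-f)^\ast(1-f)\in A^+$, so $\tau(1-f)\geq 0$ by the same reasoning. Combining gives $0\leq\tau(f)\leq\tau(1)$, i.e.\ $\tau(e)\in[0,\tau(1)]$.

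For the faithfulness statement, assume $\tau(e)=0$, so $\tau(f)=0$. Since $f\in A^+$ and $\tau$ is faithful, we conclude $f=0$. Then $e=fe=0$, as required. Conversely, $e=0$ trivially gives $\tau(e)=0$.

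No step looks genuinely hard: the whole argument is an application of Lemma~\ref{lem:kap} followed by the elementary observation that both $f$ and $1-f$ are positive when $f$ is a self-adjoint idempotent. The only point requiring slight care is making sure one invokes the trace property (to pass from $e$ to $f$) before invoking positivity/faithfulness (which only apply on $A^+$), since $e$ itself need not be positive.
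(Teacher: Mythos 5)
Your argument is correct and follows essentially the same route as the paper's proof: apply Lemma~\ref{lem:kap} to obtain a self-adjoint idempotent $f$ with $ef=f$, $fe=e$, use the trace identity to get $\tau(e)=\tau(f)$, deduce positivity and the bound $\tau(f)\leq\tau(1)$ from $f, 1-f\in A^+$, and use faithfulness to conclude $f=0$ and hence $e=fe=0$. Nothing to add.
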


\begin{proof} 
Let $e$ be an idempotent of $A$. By Lemma~\ref{lem:kap}, there is $f\in A$ 
self-adjoint idempotent such that $ef=f$ and $fe=e$. As $\tau$ is a trace 
function, this yields
$\tau(f)=\tau(ef)=\tau(fe)=\tau(e)$.
In particular, $\tau(e)\in\R^+_0$ because $f=f^2=ff^*\in A^+$ and  $\tau$ is 
positive. Moreover, if $\tau(e)=0$ then $\tau(f)=0$ and, as $\tau$ is faithful, 
this yields $f=0$ and thus $e=fe=0$.
Finally, since $1-f=(1-f)^*(1-f)\in A^+$, one has $\tau(1-f)\geq0$ and so 
$\tau(f)\leq \tau(1)$.
\end{proof}




\section{Hecke algebras for t.d.l.c.~groups}
\label{s:ftdlc}
Throughout this section $G$ will denote a 
t.d.l.c.~group, and $\caO\subseteq G$
will be a compact open subgroup of $G$.


\subsection{Function spaces}
\label{ss:funcomsupp}
Any ring $\E\in\{\,\Z,\Q,\R,\C\,\}$  will be always considered to be endowed with the discrete topology. By $\euF(G,\E)$ we denote  the $\E$-module of functions
from $G$ to $\E$. This $\E$-module is canonically a left/right $\E[G]$-bimodule,
where for $f\in \euF(G,\E)$, $g,x\in G$, one has
\[(g \ldot f)(x)= f(g^{-1}x),\qquad (f\ldot g)(x)=f(x g^{-1}).\]
By
\[\begin{aligned}
\euF(G,\E)^{\caO}&=\{\,f\in\euF(G,\E)\mid \forall \omega\in\caO:\  f\ldot\omega=f\,\},\text{ and}\\
{}^{\caO}\euF(G,\E)^{\caO}&=\{\,f\in\euF(G,\E)\mid \forall \omega_1,\omega_2\in\caO:\  
\omega_1\ldot(f\ldot\omega_2)=f\,\},\\
\end{aligned}\]
we denote the $\E$-module of right and the $\E$-module of left and right fixed points with respect to $\caO$, respectively.
Any function of either of these $\E$-modules is locally constant and hence continuous.
By $\caC_c(G,\E)\subseteq\euF(G,\E)$ we denote
the $\E$-submodule of {\it continuous functions with compact support}  and put
\begin{equation*}
\begin{aligned}
\ca C_c(G,\E)^{\caO}&=\ca C_c(G,\E)\cap\euF(G,\E)^{\caO}\ \text{and}\\ 
{}^{\caO}\caC_c(G,\E)^{\caO}&=\caC_c(G,\E)\cap{}^{\caO}\euF(G,\E)^{\caO}.
\end{aligned}
\end{equation*}
\begin{rem}\label{rem:loc_const}
As $\E$ is discrete, every function $f\in\caC_c(G,\E)$ must be locally constant: for $x\in G$ there exists a compact open subgroup
$\caO_x$ such that $f$ is constant on $x\caO_x$; since $\supp(f)$ is compact, there exists a finite set
$\Omega\subseteq \supp(f)$ such that $\supp(f)\subseteq \bigcup_{x\in\Omega} 
x\caO_x$. 
In particular, $f\in \caC_c(G,\E)^{\caO}$ for $\caO=\bigcap_{x\in\Omega}\caO_x$ and hence
\[\textstyle{\caC_c(G,\E)=\bigcup_{\caO\subseteq G}\caC_c(G,\E)^{\caO},}\]
where the union is running over all compact open subgroups of $G$.
\end{rem}
Given $S\subseteq G$, let $I_S(\argu)\colon G\to\E$ be the characteristic function defined by  $I_S(x)=1$ if $x\in S$ and $I_S(x)=0$ otherwise.
For all $g\in G$, one has $g\ldot I_S=I_{gS}$ and $I_S\ldot g=I_{Sg}$. Moreover, we define
$$[g\dbr:= I_{g\caO}\in \caC_c(G,\E)^{\caO}\quad\text{and}\quad \dbl g\dbr:= I_{\caO g\caO}\in {}^{\caO}\caC_c(G,\E)^{\caO},$$
whenever $\ca O$ is a compact open subgroup of $G$.

In view of the remark above, the following fact is straightforward.

\begin{fact}
\label{fact:perm}
Let $G$ be a t.d.l.c.~group, let $\caO$ be a compact open subgroup of $G$, and 
let $\E\in\{\,\Z,\Q,\R,\C\,\}$. 
\begin{itemize}
\item[(a)] The $\E$-linear map $\phi\colon \E[G/\caO]\longrightarrow\caC_c(G,\E)^{\caO}$,
$\phi(y\caO)=[y\dbr$, $y\in G$, is an isomorphism of discrete left $\E[G]$-modules.
\item[(b)] The $\E$-linear map $\psi\colon \E[\caO\backslash G/\caO]\longrightarrow{}^{\caO}\caC_c(G,\E)^{\caO}$,
$\psi(\caO y\caO)=\dbl y\dbr$, $y\in G$, is an isomorphism of $\E$-modules.
\end{itemize}
\end{fact}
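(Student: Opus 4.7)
The plan is to handle (a) and (b) in parallel, since both reduce to the same basic observation: any element of $\caC_c(G,\E)^{\caO}$ (resp.\ ${}^{\caO}\caC_c(G,\E)^{\caO}$) is a finite $\E$-linear combination of characteristic functions of cosets (resp.\ double cosets), because support is compact and cosets/double cosets are open.

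For part (a), I would first check that $\phi$ is well-defined: the function $[y\dbr=I_{y\caO}$ depends only on the coset $y\caO$, so $\phi$ extends $\E$-linearly from the basis $\{y\caO\mid y\in G/\caO\}$ of the free $\E$-module $\E[G/\caO]$. For surjectivity, take $f\in\caC_c(G,\E)^{\caO}$. The condition $f\cdot\omega=f$ for all $\omega\in\caO$ means $f(x\omega^{-1})=f(x)$, so $f$ is constant on each right coset $x\caO$; hence $\supp(f)$ is a union of right cosets. Since each coset $x\caO$ is open and $\supp(f)$ is compact, this union is finite, say $\supp(f)=\bigsqcup_{i=1}^{n}y_i\caO$, and then
\[f=\sum_{i=1}^{n}f(y_i)\,I_{y_i\caO}=\phi\Big(\sum_{i=1}^{n}f(y_i)\cdot y_i\caO\Big).\]
Injectivity is immediate: evaluating $\sum_i\alpha_i[y_i\dbr=0$ at a representative $y_j$ of a distinct coset gives $\alpha_j=0$. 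Finally, $G$-equivariance follows from $g\cdot I_{y\caO}=I_{gy\caO}$, so $\phi(g\cdot y\caO)=g\cdot\phi(y\caO)$.

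For part (b), the same strategy applies once I verify that each $\dbl y\dbr=I_{\caO y\caO}$ really lies in $\caC_c(G,\E)$. The double coset $\caO y\caO$ is open, because it is a union of right cosets $\omega y\caO$ with $\omega\in\caO$; and it is compact, as the image of the compact set $\caO\times\caO$ under the continuous multiplication map $G\times G\to G$. Equivalently, writing $\caO y\caO=\bigsqcup \omega y\caO$ where $\omega$ runs through a transversal of $(\caO\cap y\caO y^{-1})$ in $\caO$, the compactness of $\caO$ together with the openness of the stabiliser $\caO\cap y\caO y^{-1}$ forces finitely many cosets, so $\dbl y\dbr$ has compact open support. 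After this point the argument mirrors (a): any $f\in{}^{\caO}\caC_c(G,\E)^{\caO}$ is constant on double cosets, its compact support is a finite union of such double cosets, and one writes $f=\sum_i f(y_i)\dbl y_i\dbr$; injectivity of $\psi$ follows from evaluation at representatives of distinct double cosets.

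There is no real obstacle here; the only point requiring a moment's care is the topological lemma that each double coset $\caO y\caO$ is both open and compact, which ensures $\dbl y\dbr\in\caC_c(G,\E)$ and allows the decomposition arguments to terminate in finite sums.
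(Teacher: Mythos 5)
Your argument is correct and is exactly the elaboration the paper has in mind: the paper states this Fact without proof, calling it straightforward in view of Remark~\ref{rem:loc_const} (local constancy plus compactness of the support forces a finite decomposition into open cosets, resp.\ double cosets), which is precisely the decomposition you carry out, together with the routine checks of well-definedness, injectivity by evaluation, and $G$-equivariance via $g\cdot I_S=I_{gS}$. Your extra verification that each double coset $\caO y\caO$ is compact open is the right point to make explicit, and it is handled correctly.
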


For $f\in\euF(G,\E)$ we will denote by $f^\ast\in\euF(G,\E)$
the function given by
\begin{equation}\label{eq:star}
f^\ast(x)=\overline{f(x^{-1})},\qquad x\in G,
\end{equation}
where $\overline{\cdot}\colon\C\to\C$ is complex conjugation,
i.e., $\argu^\ast\colon\euF(G,\E)\to\euF(G,\E)$ is skew-linear and
$\argu^{\ast\ast}=\iid_{\euF(G,\E)}$.


\subsection{Integrable functions}
\label{ss:intfun}
By $\mu\colon\Bor(G)\to\R^+_0\cup\{\infty\}$ we denote 
a left-invariant
Haar measure of $G$.
Here $\Bor(G)$ denotes the {\it set of Borel sets}, i.e., the
$\sigma$-algebra generated by all open subsets of $G$. Let $\Delta\colon 
G\to\Q^+$ be the {\it modular function} of $G$: $\mu(Sg)=\Delta(g)\mu(S)$ for 
all $S\in\Bor(G)$ and $g\in G$. The integral defines the $\E$-linear map
$\eps_\mu\colon\caC_c(G,\E)\longrightarrow \C$ given  by
\[\eps_\mu(f)=\int_G f(\omega)\,\dd\mu(\omega),\qquad
f\in \caC_c(G,\E),\]
which depends on  $\mu$ and, by Remark~\ref{rem:loc_const}, satisfies the following:
\begin{align}
\eps_\mu(g.f)&=\int_G f(g^{-1}\omega)\,\dd\mu(\omega)=\eps_\mu(f),\label{eq:int1}\\
\eps_\mu(f.g)&=\int_G f(\omega g^{-1})\,\dd\mu(\omega)=\Delta(g)\eps_\mu(f),\label{eq:int2}\\
\int_G f(\omega^{-1})\,\dd\mu(\omega)&=\int_G f(\omega)\Delta(\omega^{-1})\,\dd\mu(\omega)\label{eq:int3}
\end{align}
for all $g\in G$ and $f\in \caC_c(G,\E)$. 

A function $f\in\euF(G,\E)$ is called {\it integrable} if 
\[\eps_\mu(|f|)=\int_G |f(\omega)|\,\dd\mu(\omega)\]
exists and is less than $\infty$, and {\it square integrable} if $f^2$ is integrable.
We denote by $L^1(G,\E)\subseteq\euF(G,\E)$ and $L^2(G,\E)\subseteq\euF(G,\E)$ the $\E$-vector spaces of 
{\it integrable} and {\it
square integrable functions}, respectively. We also put
\begin{equation}\label{eq:L1ast}
\begin{aligned}
L^1_\ast(G,\E)&=\{\,f\in L^1(G,\E)\mid f^\ast\in L^1(G,\E)\,\}\\
&=\{\,f\in L^1(G,\E)\mid f\cdot\Delta^\ast\in L^1(G,\E)\,\}.
\end{aligned}
\end{equation}
(cf.~\eqref{eq:star},\eqref{eq:int3}). By definition, the integral can be extended to
\[\eps_\mu\colon L^1(G,\E)\longrightarrow \C,
\qquad
\eps_\mu(f)=\int_G f(\omega)\,\dd\mu(\omega),\qquad
f\in L^1(G,\E),\]
satisfying \eqref{eq:int1} and \eqref{eq:int2}. Moreover, if $f\in L^1_\ast(G,\E)$ then also \eqref{eq:int3} holds.
By $L^k(G,\E)^{\caO}$ and ${}^{\caO}L^k(G,\E)^{\caO}$, $k\in\{\,1,2,1_\ast\,\}$, we denote the intersection
with $\euF(G,\E)^{\caO}$ and ${}^{\caO}\euF(G,\E)^{\caO}$, respectively.


\subsection{The Hilbert space $L^2(G,\C)^{\caO}$}
\label{ss:hilb}
From now on, we will choose the Haar measure $\mu_\caO$ that satisfies $\mu_\caO(\caO)=1$.
On $L^2(G,\C)^{\caO}$ one has a skew-symmetric sesqui-linear form
\begin{equation}\label{eq:sequilin}
\begin{gathered}
\langle \argu\, ,\argu\rangle\colon L^2(G,\C)^{\caO}\times L^2(G,\C)^{\caO}\longrightarrow\C,\\
\langle f,h\rangle=\int_G f(\omega)\cdot \overline{h(\omega)}\,\dd\mu_\caO(\omega),\qquad f,h\in L^2(G,\C)^{\caO},
\end{gathered}
\end{equation}
making $(L^2(G,\C)^{\caO},\langle.,.\rangle)$ a {\it Hilbert space}. Indeed, if $f\in L^2(G,\C)^{\caO}$, and 
$x\in\supp(f)$, then $x\caO\subseteq\supp(f)$. Hence two functions $f_1,f_2\in L^2(G,\C)^{\caO}$ which
differ on a set of measure $0$ must be equal. Note that by \eqref{eq:int1}, the 
skew-symmetric sesqui-linear form $\langle\argu\, ,\argu\rangle$ is $G$-invariant, i.e.,
one has
\[\langle g\ldot f,g\ldot h\rangle=\langle f,h\rangle\]
for all $f,h\in L^2(G,\C)^{\caO}$ and $g\in G$. By $\nor\argu\nor_2\colon L^2(G,\C)^{\caO}\to\R_0^+$ we will denote the Hilbert norm associated
to $(L^2(G,\C)^{\caO},\langle.,.\rangle)$. The following fact is straight\-forward (cf.~\cite[\S III.5.2, Cor., p.263]{bou:top}).

\begin{fact}
\label{fact:suppL2}
Let $G$ be a t.d.l.c.~group, let $\caO$ be a compact open subgroup,
and let $\caS\subseteq G$ be a set of representatives for $G/\caO$.
\begin{itemize}
\item[(a)] $\{\,[u\dbr\mid u\in\caS\,\}$ is an orthonormal Hilbert basis of the Hilbert space $L^2(G,\C)^{\caO}$, i.e.,
\[\textstyle{L^2(G,\C)^{\caO}=\Big\{\,\sum_{u\in\caS} \lambda_u\, [u\dbr\mid \lambda_u\in\C,\ \sum_{u\in\caS} |\lambda_u|^2<\infty\,\Big\}.}\]
\item[(b)] For $f\in L^2(G,\C)^{\caO}$, $\supp(f)\subseteq G$ is $\sigma$-finite subset.
\end{itemize}
\end{fact}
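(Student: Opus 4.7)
The assertion is essentially the standard Haar-measure decomposition argument, so I expect no serious obstacle; the only care needed is in linking right-$\caO$-invariance to constancy on cosets and then invoking the left-invariance of $\mu_\caO$ (plus the normalisation $\mu_\caO(\caO)=1$) to identify the $L^2$-norm with the $\ell^2$-norm of coset values.

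\emph{Step 1: orthonormality.} Fix $u,v\in\caS$. Since $\caO$ is a subgroup, the cosets $u\caO$ and $v\caO$ are either equal (when $u=v$, as $\caS$ is a transversal) or disjoint. Hence
\[
\langle [u\dbr,[v\dbr\rangle = \int_G I_{u\caO}(\omega)\,\overline{I_{v\caO}(\omega)}\,\dd\mu_\caO(\omega)
= \mu_\caO(u\caO\cap v\caO),
\]
which equals $\mu_\caO(\caO)=1$ if $u=v$ by left-invariance, and $0$ otherwise. Thus $\{[u\dbr\}_{u\in\caS}$ is orthonormal.

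\emph{Step 2: completeness / expansion.} Let $f\in L^2(G,\C)^{\caO}$. Right-$\caO$-invariance means that for each $u\in\caS$ the function $f$ is constant on the coset $u\caO$, with value $f(u)$; consequently $f = \sum_{u\in\caS} f(u)\,[u\dbr$ pointwise. Using left-invariance of $\mu_\caO$ and the pairwise disjointness of the cosets,
\[
\nor f\nor_2^{\,2} = \int_G |f(\omega)|^2\,\dd\mu_\caO(\omega)
= \sum_{u\in\caS} |f(u)|^2\,\mu_\caO(u\caO)
= \sum_{u\in\caS} |f(u)|^2,
\]
which is finite because $f\in L^2$. Setting $\lambda_u:=f(u)$ gives the claimed $\ell^2$-expansion, and conversely any such $\ell^2$-sum converges in $L^2(G,\C)^{\caO}$ by the Pythagorean identity. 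This proves (a).

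\emph{Step 3: $\sigma$-finiteness of $\supp(f)$.} From Step 2, $\sum_{u\in\caS}|f(u)|^2<\infty$, so only countably many $u\in\caS$ satisfy $f(u)\neq 0$; call this countable subset $\caS_f$. By right-$\caO$-invariance, $\supp(f)=\bigcup_{u\in\caS_f} u\caO$, a countable union of cosets each of measure $\mu_\caO(u\caO)=1<\infty$. Hence $\supp(f)$ is $\sigma$-finite, proving (b).
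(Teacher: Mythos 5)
Your argument is correct and is exactly the straightforward computation the paper leaves to the reader (it offers no proof of this fact, only the reference to Bourbaki), so there is no divergence of method to report. One small point of rigour: since $\caS$ may be uncountable, the identity $\int_G|f|^2\,\dd\mu_\caO=\sum_{u\in\caS}|f(u)|^2$ in Step 2 should be preceded by the observation that, by Chebyshev's inequality together with $\mu_\caO(u\caO)=1$, each set $\{\,|f|\geq 1/n\,\}$ can meet only finitely many cosets $u\caO$, so $f$ vanishes off countably many cosets; this legitimises the coset-by-coset summation (countable additivity is all one has) and in fact yields (b) directly, rather than as a consequence of the norm identity as in your Step 3.
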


By $\caB_G(G/\caO)=\caB_G(L^2(G,\C)^{\caO})$ we denote the set of {\it bounded linear operators} on $L^2(G,\C)^{\caO}$
which commute with the left $G$-action. It is well-known that $\caB_G(G/\caO)$
together with the adjoint map $\argu^\ast\colon \caB_G(G/\caO)^{\op}
\to\caB_G(G/\caO)$ and the operator norm $\nor\argu\nor\colon \caB_G(G/\caO)\to\R_0^+$
is a $C^\ast$-algebra (cf.~\cite[\S 1.15]{sakai:cast}).
By definition, $\caB_G(G/\caO)\subseteq\caB(L^2(G,\C)^{\caO})$ is closed with respect to the
weak operator topology (cf.~\cite[Lemma~1.16]{emm:idem}), and thus it is also a 
$W^\ast$- or van Neumann algebra (cf.~\cite[p.~34]{sakai:cast}).


\subsection{The algebra ${}^\caO L^1_\ast(G,\C)^\caO$}
\label{ss:L1star}
For $f\in {}^\caO L^1_\ast(G,\C)^\caO$ (cf.~\eqref{eq:L1ast}), we define 
\begin{equation}
\label{eq:L1star}
\nor f\nor_1=\int_G |f(\omega)|\,\dd \mu_\caO(\omega).
\end{equation}
It is straightforward to verify that $\nor\argu\nor_1\colon{}^\caO L^1_\ast(G,\C)^\caO\to\R_0^+$ is a norm making
$({}^\caO L^1_\ast(G,\C)^\caO,\nor\argu\nor_1)$ a normed complex vector space. It comes equipped with a $\C$-linear map $\eps\colon {}^\caO L^1_\ast(G,\C)^\caO\longrightarrow \C$ given by
\begin{equation}\label{eq:eps}
\eps(f)=\int_G f(\omega)\,\dd\mu_\caO(\omega),
\qquad f\in {}^\caO L^1_\ast(G,\C)^\caO,
\end{equation}
and a skew-linear map $\argu^\ast\colon {}^\caO L^1_\ast(G,\C)^\caO\to {}^\caO L^1_\ast(G,\C)^\caO$ (cf.~\eqref{eq:star}).

The following fact is straightforward 
(cf.~\cite[\S III.5.2, Cor.,~p.263]{bou:top}).

\begin{fact}
\label{fact:suppL1}
Let $G$ be a t.d.l.c.~group, let $\caO$ be a compact open subgroup,
and let $\caR\subseteq G$ be a set of representatives of $\caO\backslash G/\caO$ containing $1\in G$.
\begin{itemize}
\item[(a)] One has 
\[\begin{aligned}
{}^\caO L^1_\ast&(G,\C)^\caO=\Big\{\,f=\sum_{s\in\caR} \nu_s \dbl s\dbr\mid \nu_s\in\C,\\
&\sum_{s\in\caR}  \mu_{\caO}(\caO s\caO)\cdot |\nu_s|<\infty\ \text{and}\  
\sum_{s\in\caR} \mu_{\caO}(\caO s^{-1}\caO)\cdot |\nu_s| <\infty\,\Big\}.
\end{aligned}\]
In particular, $({}^\caO L^1_\ast(G,\C)^\caO,\nor\argu\nor_1)$ is complete.
\item[(b)] For $f\in {}^\caO L^1_\ast(G,\C)^\caO$ its support $\supp(f)$ is $\sigma$-finite.
\end{itemize}
\end{fact}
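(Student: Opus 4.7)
The plan is to reduce both assertions to statements about the coefficient sequence indexed by $\caR$, using that any $f\in{}^\caO\euF(G,\C)^\caO$ is constant on each double coset $\caO s\caO$. Setting $\nu_s=f(s)$ for $s\in\caR$ yields the pointwise equality $f=\sum_{s\in\caR}\nu_s\dbl s\dbr$, where at each point of $G$ at most one summand is non-zero. The content of (a) is then to translate the two conditions $f\in L^1(G,\C)$ and $f^\ast\in L^1(G,\C)$ defining $L^1_\ast$ into summability conditions on the $\nu_s$.

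The key observation is that the double cosets $\{\caO s\caO\}_{s\in\caR}$ are pairwise disjoint Borel sets of finite measure: $\caO\cap s\caO s^{-1}$ is an open subgroup of the compact group $\caO$, so $\mu_\caO(\caO s\caO)=[\caO:\caO\cap s\caO s^{-1}]<\infty$. Monotone convergence applied to the partial sums of $|f|$ therefore gives $\nor f\nor_1=\sum_{s\in\caR}|\nu_s|\mu_\caO(\caO s\caO)$, which is the first summability condition. For the second, I would first compute $\dbl s\dbr^\ast$: since $(\caO s\caO)^{-1}=\caO s^{-1}\caO$ and $\dbl s\dbr$ is real-valued, \eqref{eq:star} gives $\dbl s\dbr^\ast=\dbl s^{-1}\dbr$, whence $f^\ast=\sum_{s\in\caR}\overline{\nu_s}\dbl s^{-1}\dbr$. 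The same disjointness argument applied to $f^\ast$ then yields $\int_G|f^\ast|\,\dd\mu_\caO=\sum_{s\in\caR}|\nu_s|\mu_\caO(\caO s^{-1}\caO)$, which is the second summability condition.

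For completeness, I would take a Cauchy sequence $(f_n)$ in ${}^\caO L^1_\ast(G,\C)^\caO$ with coefficients $\nu_s^{(n)}$, and use the bound $|\nu_s^{(n)}-\nu_s^{(m)}|\mu_\caO(\caO s\caO)\leq\nor f_n-f_m\nor_1$ to see each $\nu_s^{(n)}$ is Cauchy in $\C$; passing to the pointwise limit $\nu_s$ and applying Fatou's lemma to the partial sums transfers the uniform bounds on $\nor f_n\nor_1$ and on $\nor f_n^\ast\nor_1$ to the formal series $f=\sum_{s\in\caR}\nu_s\dbl s\dbr$, which therefore lies in ${}^\caO L^1_\ast(G,\C)^\caO$ with $\nor f-f_n\nor_1\to 0$. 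Part (b) is then immediate from (a): since the series $\sum_s|\nu_s|\mu_\caO(\caO s\caO)$ converges with strictly positive summands, only countably many $\nu_s$ are non-zero, so $\supp(f)$ is a countable disjoint union of double cosets each of finite measure. The most delicate bookkeeping is in the completeness step, since $\nor\argu\nor_1$ directly controls only the first summability condition; one must verify that the sequence $(\nor f_n^\ast\nor_1)$ is also uniformly bounded in order to invoke Fatou for the second condition, which is clean in the unimodular case (where the two sums agree) but in general requires treating the $\ast$-involution as an isometry onto the dual norm.
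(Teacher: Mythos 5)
Your identification of ${}^\caO L^1_\ast(G,\C)^\caO$ with the coefficient space is correct and is exactly the routine argument the paper has in mind (it offers nothing beyond a reference): bi-$\caO$-invariant functions are constant on the pairwise disjoint open double cosets, each of finite measure $\mu_\caO(\caO s\caO)=|\caO:\caO\cap s\caO s^{-1}|$, the computation $\dbl s\dbr^\ast=\dbl s^{-1}\dbr$ converts the condition $f^\ast\in L^1$ into the second summability condition, and (b) follows because a convergent sum of strictly positive terms has countably many nonzero terms.

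The completeness step, however, contains a genuine gap, and you have put your finger on it yourself: $\nor\argu\nor_1$-Cauchyness gives no control on $\nor f_n^\ast\nor_1$, and ``treating the $\ast$-involution as an isometry onto the dual norm'' is not an argument. In fact the gap cannot be closed in the stated generality. If $G$ is not unimodular, the image of $\Delta$ is an unbounded subgroup of $\Q^{>0}$ and $\Delta$ is constant on $\caO$-double cosets, so one may choose representatives $s_k$ in pairwise distinct double cosets with $\Delta(s_k)\leq 4^{-k}$; setting $f_n=\sum_{1\leq k\leq n}2^{-k}\,\mu_\caO(\caO s_k\caO)^{-1}\dbl s_k\dbr\in\caH(G,\caO)_\C\subseteq{}^\caO L^1_\ast(G,\C)^\caO$ and using $\mu_\caO(\caO s_k^{-1}\caO)=\Delta(s_k)^{-1}\mu_\caO(\caO s_k\caO)$ (cf.~\eqref{eq:deco2}), the sequence $(f_n)$ is $\nor\argu\nor_1$-Cauchy but its $L^1$-limit violates the second summability condition, so it has no limit in the space. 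Thus completeness with respect to $\nor\argu\nor_1$ alone holds only when $G$ is unimodular, where $\mu_\caO(\caO s\caO)=\mu_\caO(\caO s^{-1}\caO)$ makes the second condition redundant and your Cauchy/Fatou scheme closes as written; for general $G$ the same scheme proves completeness of the symmetrized norm $\nor f\nor_1+\nor f^\ast\nor_1$. Since the paper only exploits this algebra (trace, $C^\ast$-Hecke algebra) for unimodular $G$, restricting the completeness claim to that case, or switching to the symmetrized norm, is the correct repair; your write-up should state this explicitly rather than appeal to an isometry that is not there.
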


\begin{rem}
\label{rem:L1star}
For $s\in G$ the set $\caO s\caO$ is compact, and one has decompositions
\begin{equation*}
\label{eq:deco1}
\caO s\caO=\textstyle{\bigsqcup_{1\leq i\leq m(s)} \caO s_i^r=\bigsqcup_{1\leq j\leq n(s)} s_j^l\caO}
\end{equation*}
for suitable elements $s_1^r,\ldots,s_{m(s)}^r, s_1^l,\ldots, s_{n(s)}^l\in G$.
In particular, one has
\begin{equation}
\label{eq:deco2}
\textstyle{\mu_\caO(\caO s\caO)=n(s)
=\sum_{1\leq i\leq m(s)} \Delta(s_i^r)=
\Delta(s)\cdot m(s)}
\end{equation}
and $\dbl s\dbr=\sum_{1\leq j\leq n(s)} [s_j^l\dbr$.
Hence ${}^\caO L^1_\ast(G,\C)^\caO$ is $\C$-subspace of $L^2(G,\C)^\caO$ by Facts~\ref{fact:suppL2} and~\ref{fact:suppL1}.
\end{rem}

For functions $f,h\in {}^\caO L^1_\ast(G,\C)^\caO$ the {\it convolution} $f\ast h$ is defined by
\begin{equation}
\label{eq:conv}
(f\ast h)(x)=\int_{G} f(\omega)\cdot h(\omega^{-1} x)\, \dd \mu_\caO(\omega)
\end{equation}
(cf.~\cite[Thm.~20.10]{hero:ah1}). By Fact~\ref{fact:suppL1}(b), and as $\supp(f\ast h)\subseteq \supp(f)\cdot\supp(h)$, one has
by Fubini's theorem that 
\begin{equation}\label{eq:conv2}
\nor f\ast h\nor_1\leq \nor f\nor_1\cdot \nor h\nor_1,
\end{equation}
i.e., the convolution is continuous, and $f\ast h\in L^1(G,\C)$ (cf.~\cite[Cor.~20.14]{hero:ah1}). Moreover,
\begin{align}
(h^\ast\ast f^\ast) (x)& 
=\int_G \overline{h(\omega^{-1})}\cdot \overline{f(x^{-1}\omega)}\,\dd\mu_\caO(\omega)\notag\\
\intertext{and, substituting $\omega=x\tau$, one obtains (cf.~\eqref{eq:int1})}
&=\int_G \overline{f(\tau)}\cdot \overline{h(\tau^{-1}x^{-1})}\,\dd\mu_\caO(\tau)=(f\ast h)^\ast(x);\label{eq:conv3}
\end{align}
i.e., $(f\ast h)^\ast= h^\ast\ast f^\ast$ and $f\ast h\in L^1_\ast(G,\C)$. 
Finally, for $\omega\in\caO$, one has by \cite[Remark~20.11]{hero:ah1} that
\begin{equation}
\label{eq:astinv}
\begin{aligned}
\omega\ldot(f\ast h)&=(\omega\ldot f)\ast h=f\ast h,\\
(f\ast h)\ldot \omega&=f\ast(h\ldot \omega)=f\ast h,
\end{aligned}
\end{equation}
and $f\ast h\in{}^\caO L^1_\ast(G,\C)^\caO$. This has the following consequence.

\begin{prop}
\label{prop:conv}
Let $\caO$ be a compact open subgroup of the t.d.l.c.\,group $G$.
\begin{itemize}
\item[(a)] $({}^\caO L^1_\ast(G,\C)^\caO,\ast)$ is an associative algebra with 
unit $\dbl 1 \dbr$.
\item[(b)] $\argu^\ast\colon ({}^\caO L^1_\ast(G,\C)^\caO)^{\op}\to {}^\caO L^1_\ast(G,\C)^\caO$ is an involution, i.e., it is a homomorphism of algebras
satisfying $\argu^{\ast\ast}=\iid_{{}^\caO L^1_\ast(G,\C)^\caO}$. 
\item[(c)] $({}^\caO L^1_\ast(G,\C)^\caO,\nor\argu\nor_1,\ast)$ is a Banach $\ast$-algebra. If $G$ is unimodular, one has
$\nor f^\ast\nor_1=\nor f\nor_1$ for all $f\in {}^{\caO}L^1_\ast(G,\C)^{\caO}$.
\item[(d)] $\eps\colon {}^\caO L^1_\ast(G,\C)^\caO\to\C$ is a homomorphism of $\C$-algebras. Moreover, if $G$
is unimodular, then $\eps(f^\ast)=\overline{\eps(f)}$ for all $f\in {}^\caO L^1_\ast(G,\C)^\caO$.
\end{itemize}
\end{prop}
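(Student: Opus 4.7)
The plan is to verify the four assertions by combining Fubini's theorem with the transformation laws \eqref{eq:int1}--\eqref{eq:int3} and the identities already collected in \eqref{eq:conv2}--\eqref{eq:astinv}, treating each item in turn while reusing estimates to keep everything inside ${}^\caO L^1_\ast(G,\C)^\caO$.

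For part (a), associativity of $\ast$ is the standard calculation: I would write out $((f\ast h)\ast k)(x)$ as an iterated integral, apply Fubini (justified by \eqref{eq:conv2} applied to $|f|,|h|,|k|$), and substitute $\omega\mapsto \omega\tau$ to recognise $(f\ast(h\ast k))(x)$. The unit is $\dbl 1\dbr = I_\caO$ with $\mu_\caO(\caO)=1$: for $f\in{}^\caO L^1_\ast(G,\C)^\caO$, $(\dbl 1\dbr \ast f)(x) = \int_\caO f(\omega^{-1}x)\,\dd\mu_\caO(\omega) = f(x)$ by left $\caO$-invariance, and $(f\ast\dbl 1\dbr)(x) = \int_G f(\omega)I_\caO(\omega^{-1}x)\,\dd\mu_\caO(\omega) = \int_{x\caO} f(\omega)\,\dd\mu_\caO(\omega) = f(x)$ by right $\caO$-invariance. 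Part (b) is already essentially discharged by \eqref{eq:conv3} together with the tautology $f^{\ast\ast}=f$ and the skew-linearity of $\argu^\ast$, so I would simply cite these and note that $\argu^\ast$ stabilises ${}^\caO L^1_\ast(G,\C)^\caO$ by construction.

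Part (c) combines completeness of $({}^\caO L^1_\ast(G,\C)^\caO,\nor\argu\nor_1)$ (Fact~\ref{fact:suppL1}(a)), the submultiplicativity \eqref{eq:conv2}, and continuity of $\argu^\ast$; the first two give the Banach-algebra structure and the third upgrades it to a Banach $\ast$-algebra. The isometry $\nor f^\ast\nor_1=\nor f\nor_1$ in the unimodular case is the substitution $\omega\mapsto\omega^{-1}$ in \eqref{eq:L1star} combined with \eqref{eq:int3} and $\Delta\equiv1$. For part (d), to see $\eps(f\ast h) = \eps(f)\eps(h)$ I would expand, apply Fubini, and then use the left-invariance identity \eqref{eq:int1} in the inner integral to collapse $\int_G h(\omega^{-1}x)\,\dd\mu_\caO(x)$ to $\eps(h)$, leaving $\eps(f)$ out front. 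The final identity $\eps(f^\ast)=\overline{\eps(f)}$ is \eqref{eq:star} followed by \eqref{eq:int3} and $\Delta\equiv 1$.

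No single step is genuinely hard; the main care point is bookkeeping the integrability hypotheses so that each Fubini application is legal and each substitution stays within $L^1_\ast$ rather than just $L^1$. For associativity and for the homomorphism property of $\eps$ this amounts to dominating by $|f|\ast|h|\ast|k|$ and $|f|\ast|h|$ respectively and invoking \eqref{eq:conv2}; for the involutive identities one uses the equivalent description $L^1_\ast = \{f\in L^1\mid f\cdot\Delta^\ast\in L^1\}$ from \eqref{eq:L1ast}, which is precisely what makes the substitution $\omega\mapsto\omega^{-1}$ legitimate.
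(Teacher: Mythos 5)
Your proposal is correct and follows essentially the same route as the paper: associativity and the unit via Fubini and the substitution/invariance identities \eqref{eq:int1}--\eqref{eq:int3}, part (b) from \eqref{eq:conv3}, part (c) from Fact~\ref{fact:suppL1}(a) and \eqref{eq:conv2}, and part (d) from Fubini with \eqref{eq:int1} and \eqref{eq:int3}. Your write-up is in fact slightly more explicit than the paper's (e.g.\ spelling out the Fubini domination and the homomorphism property of $\eps$, which the paper only attributes to \eqref{eq:int3}), but the substance is identical.
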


\begin{proof} (a)
Let $f,g,h\in {}^\caO L^1_\ast(G,\C)^\caO$. By Fubini's theorem, one has that
\begin{align}
((f\ast g)\ast h)(z)&=\int_G\Big(
\int_G f(\omega_1)\cdot g(\omega_1^{-1}\omega_2)\,\dd \mu(\omega_1)
\Big)\,h(\omega_2^{-1}z)\,\dd \mu(\omega_2)\notag\\
&=\int_{G\times G} f(\omega_1)\cdot g(\omega_1^{-1}\omega_2)\cdot 
h(\omega_2^{-1} z)\,\dd \mu^{(2)}(\omega_1,\omega_2)\notag\\
&=\int_G f(\omega_1)\cdot\Big(
\int_G  g(\omega_1^{-1}\omega_2)\cdot h(\omega_2^{-1}z)\,\dd \mu(\omega_2)
\Big)\,\dd \mu(\omega_1)\notag\\
\intertext{and substituting $\tau_2=\omega_1^{-1}\omega_2$ yields (cf.~\eqref{eq:int1})}
&=\int_G f(\omega_1)\cdot\Big(
\int_G g(\tau_2)\cdot h(\tau_2^{-1}\omega_1^{-1}z)\,\dd \mu(\tau_2)
\Big)\,\dd\mu(\omega_1)\notag\\
&=(f\ast(g\ast h))(z).\notag
\end{align}
Hence $({}^\caO L^1_\ast(G,\C)^\caO,\ast)$ is an associative algebra. For $f\in {}^\caO L^1_\ast(G,\C)^\caO$,
\begin{align}
(f\ast \dbl 1 \dbr)(z)&=\int_G f(\omega)\cdot 
I_{\caO}(\omega^{-1}z)\,\dd\mu_\caO(\omega)\notag\\
&=\int_G f(\omega)\cdot I_{\caO z^{-1}}(\omega^{-1})\,\dd\mu_\caO(\omega)\notag\\
\intertext{and as $I_{\caO z^{-1}}(\omega^{-1})=I_{z\caO}(\omega)$, this yields}
&=\int_{z\caO} f(\omega)\,\dd\mu_\caO(\omega)=f(z)\cdot\mu_{\caO}(z\caO)=f(z),\notag
\end{align}
i.e., $f\ast \dbl 1 \dbr=f$. Similarly, $\dbl 1 \dbr\ast f=f$.
Part (b) is a direct consequence of \eqref{eq:conv3}. For (c) note that
Fact~\ref{fact:suppL1}(a) implies that $({}^\caO L^1_\ast(G,\C)^\caO,\nor\argu\nor_1)$ is a 
Banach space, and from \eqref{eq:conv2} concludes that
$({}^\caO L^1_\ast(G,\C)^\caO,\nor\argu\nor_1,\ast)$ is a Banach $\ast$-algebra.
If $G$ is unimodular, then $\nor f^\ast\nor_1=\nor f\nor_1$ for all $f\in 
{}^\caO L^1_\ast(G,\C)^\caO$
by \eqref{eq:int3}. Finally, part (d) follows from \eqref{eq:int3}.
\end{proof}


\subsection{The Hecke algebra $\caH(G,\caO)_{\E}$}
\label{ss:hecke}
We define the {\it Hecke $\E$-algebra} of the Hecke pair $(G,\caO)$ by
\begin{equation}
\label{eq:Hec1}
\caH(G,\caO)_\E={}^{\caO}\ca C_c(G,\E)^{\caO}
=\spn_{\E}\{\,\dbl u\dbr\mid u\in G\,\}\subset {}^\caO L^1_\ast(G,\C)^\caO
\end{equation}
(cf. Fact~\ref{fact:perm}(b)). 
Let $\caR\subseteq G$ be a set of representatives of $\caO\backslash G/\caO$ satisfying
\begin{itemize}
\item[(i)] $1\in\caR$, and
\item[(ii)] $r\in\caR$ implies $r^{-1}\in\caR$.
\end{itemize}
By \eqref{eq:deco1}, one has for $s,t\in \caR$ that
\begin{alignat}{2}
(I_{\caO s\caO}\ast I_{\caO t\caO})(z)&=\int_G I_{\caO s\caO}(\omega) \cdot I_{\caO t\caO}(\omega^{-1}z)\,\dd \mu_\caO(\omega)&\notag\\
&=\int_{\caO s\caO} I_{\caO t\caO}(\omega^{-1}z)\, \dd \mu_\caO(\omega)&& \notag\\
&=\sum_{1\leq j\leq n(s)} \int_{s_j^l\caO} I_{\omega\caO t\caO}(z)\,\dd \mu_{\caO}(\omega)&&=\sum_{1\leq j\leq n(s)} I_{s_j^l\caO t\caO}(z).\notag
\end{alignat}
In particular,
\begin{equation}
\label{eq:deco3}
\dbl s\dbr\ast \dbl t\dbr=\sum_{1\leq j\leq n(s)} s_j^l\dbl t\dbr.
\end{equation}
As $\caO s\caO\cdot \caO t\caO\subseteq G$ is compact, there is a finite subset
$\caR(s,t)\subseteq\caR$ such that
\begin{equation}
\label{eq:deco4}
\textstyle{\bigcup_{1\leq j\leq n(s)} s_j^l\caO t\caO=\caO s\caO\cdot \caO t\caO=\bigsqcup_{r\in\caR(s,t)} \caO r\caO.}
\end{equation}
For $s, t\in G$ and $r\in\caR(s,t)$ let $a_{s,t;r}$ be the positive integer satisfying
\begin{equation}
\label{eq:deco5}
a_{s,t;r}=\sum_{1\leq j\leq n(s)} I_{s_j^l\caO t\caO}(r).
\end{equation}
In particular, $0<a_{s,t;r}\leq n(s)$. For $r\in\caR\setminus\caR(s,t)$ put $a_{s,t;r}=0$.
From \eqref{eq:deco3} and \eqref{eq:deco4} one concludes that
\begin{equation}
\label{eq:deco6}
\dbl s\dbr\ast \dbl t\dbr=\sum_{r\in\caR} a_{s,t;r} \dbl r\dbr
\end{equation}
showing that $\caH(G,\caO)_\E$ is an $\E$-subalgebra of $({}^\caO L^1_\ast(G,\C)^\caO,\ast)$.

By construction, one has
\begin{equation}
\label{eq:epsI}
\eps(\dbl s\dbr)=\mu_{\caO}(\caO s\caO)=n(s)=\Delta(s)\cdot m(s)\in\N
\end{equation}
(cf.~\eqref{eq:eps}, \eqref{eq:deco2}), i.e., 
$\eps(\caH(G,\caO)_\E)\subseteq\E$. Furthermore, \eqref{eq:int3} implies also 
\begin{equation}
\label{eq:deco11}
\eps(\dbl s^{-1}\dbr)=\Delta(s^{-1})\cdot \eps(\dbl s\dbr).
\end{equation}


\subsection{The right action of ${}^\caO L^1_\ast(G,\C)^\caO$ on $L^2(G,\C)^{\caO}$}
\label{ss:l12}
Let $\caR\subseteq G$ be a set of representatives of $\caO\backslash G/\caO$ as in the previous subsection. We put
\[\begin{aligned}
\caR^+&=\{\,s\in\caR\mid \Delta(s)>1\,\},\\
\caR^0&=\{\,s\in\caR\mid \Delta(s)=1\,\},\\
\caR^-&=\{\,s\in\caR\mid \Delta(s)<1\,\};\\
\end{aligned}\]
i.e., $\caR=\caR^+\sqcup\caR^0\sqcup\caR^-$.
For an element $f=\sum_{s\in\caR}\nu_s\cdot\dbl s\dbr\in{}^\caO L^1_\ast(G,\C)^\caO$ (cf. Fact~\ref{fact:suppL1}(a)) we put, 
for $x\in\{+,-,0\}$,
\[\textstyle{\nor f\nor_1^x=\sum_{s\in\caR^x}|\nu_s|\cdot \mu_\caO(\caO s\caO).}\]
In particular, $\nor f\nor_1=\nor f\nor_1^+ +\nor f\nor_1^0 + \nor f\nor_1^-$. Therefore,
\begin{equation}\label{eq:decoR3}
\begin{aligned}
&\nor \Delta^{-1/2}\cdot f\nor_1^+ &=&\textstyle{\sum_{s\in\caR^+}|\nu_s|\cdot \mu_\caO(\caO s\caO)\cdot \Delta(s)^{-1/2}}&
\leq& \nor f\nor_1^+,\\
&\nor \Delta^{-1/2}\cdot f\nor_1^0 &=&\textstyle{\sum_{s\in\caR^0}|\nu_s|\cdot 
\mu_\caO(\caO s\caO)}&=&\nor f\nor_1^0,\\
&\nor \Delta^{-1/2}\cdot f\nor_1^- &=&\textstyle{\sum_{s\in\caR^-}|\nu_s|\cdot 
\mu_\caO(\caO s\caO)\cdot \Delta(s)^{-1/2}}
&\leq&\nor \Delta^\ast\cdot f\nor_1^-\\
\end{aligned}
\end{equation}
with $\Delta^\ast\cdot f\in {}^\caO L^1_\ast(G,\C)^\caO$ by \eqref{eq:L1ast}.
In particular, $\Delta^{-1/2}\cdot f\in {}^\caO L^1_\ast(G,\C)^\caO$.
Thus, by \cite[Cor.~20.14]{hero:ah1}, for $\beta\in L^2(G,\C)^{\caO}$ and $f\in {}^\caO L^1_\ast(G,\C)^\caO$ one
concludes that $\beta\ast f\in L^2(G,\C)$, and
\begin{equation}
\label{eq:decoR5}
\nor \beta\ast f\nor_2\leq \nor\beta\nor_2\cdot \nor\Delta^{-1/2}\cdot f\nor_1.
\end{equation}
Moreover, 
\cite[Rem.~20.11(ii)]{hero:ah1} implies that $\beta\ast f\in L^2(G,\C)^{\caO}$.

One has the following property.

\begin{prop}
\label{prop:act}
Let $\caO$\,be a compact open subgroup of the t.d.l.c.\,group $G$.
\begin{itemize}
\item[(a)] Convolution defines a right action of ${}^\caO L^1_\ast(G,\C)^\caO$
on the Hilbert space $L^2(G,\C)^\caO$, and a continuous, injective homomorphism
of Banach $\ast$-algebras $\phi\colon ({}^\caO L^1_\ast(G,\C)^\caO)^{\op}\to \caB_G(G/\caO)$
given by
\[\phi(f)(\beta)=\beta\ast f,
\quad f\in {}^\caO L^1_\ast(G,\C)^\caO,\beta\in L^2(G,\C)^\caO.
\]
\item[(b)] For $f\in \caH(G,\caO)_{\E}$ one has $\phi(f)(\caC_c(G,\E)^{\caO})\subseteq \caC_c(G,\E)^{\caO}$.
In particular, $\phi$ induces an isomorphism
\[\phi_\ast\colon\caH(G,\caO)_{\E}^{\op}\longrightarrow 
\End_{G}(\caC_c(G,\E)^{\caO}).\]
\end{itemize}
\end{prop}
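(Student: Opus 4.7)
The plan is to verify each clause by unpacking the convolution structure already developed in Subsections~\ref{ss:L1star} and~\ref{ss:l12}, so that every step reduces either to Fubini's theorem applied to one of the estimates \eqref{eq:decoR3}--\eqref{eq:decoR5} or to a direct calculation with characteristic functions.

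For part~(a), the well-definedness $\phi(f)\in\caB(L^2(G,\C)^\caO)$ and the operator-norm bound $\nor\phi(f)\nor\leq\nor\Delta^{-1/2}\cdot f\nor_1$ are already granted by \eqref{eq:decoR5} together with \eqref{eq:decoR3}, so $\phi$ is continuous. I would then check the right-action axioms: associativity $(\beta\ast g)\ast f=\beta\ast(g\ast f)$ is obtained by exactly the Fubini argument used in the proof of Proposition~\ref{prop:conv}(a) (the hypothesis $\beta\in L^2$ instead of $L^1_\ast$ affects only the absolute-convergence bookkeeping), and $\beta\ast\dbl 1\dbr=\beta$ is the short calculation carried out there. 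This makes $\phi$ a homomorphism from the opposite algebra because $\phi(f\cdot_{\op} g)(\beta)=\beta\ast(g\ast f)=(\beta\ast g)\ast f=\phi(f)(\phi(g)(\beta))$. The $G$-equivariance of $\phi(f)$ is a one-line left-translation substitution, and injectivity is obtained by evaluating on $\dbl 1\dbr=I_\caO\in L^2(G,\C)^\caO$: since $\dbl 1\dbr\ast f=f$ by the computation already performed in the proof of Proposition~\ref{prop:conv}(a), $\phi(f)=0$ forces $f=0$. The $\ast$-property requires two applications of Fubini: compute $\langle\beta\ast f,\gamma\rangle$ and substitute $x=\omega y$ in the inner integral, then compute $\langle\beta,\gamma\ast f^\ast\rangle$ and substitute $\omega=xy$; using the left invariance of $\mu_\caO$, both double integrals collapse to
\[
\int_G f(y)\int_G\beta(z)\overline{\gamma(zy)}\,\dd\mu_\caO(z)\,\dd\mu_\caO(y),
\]
whence $\phi(f^\ast)=\phi(f)^\ast$.

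For part~(b), a direct computation from \eqref{eq:conv} using the coset decomposition \eqref{eq:deco1} gives $[y\dbr\ast\dbl u\dbr=\sum_{1\leq j\leq n(u)}[y s_j^l\dbr\in\caC_c(G,\E)^\caO$, so $\phi_\ast$ is well defined. Its injectivity is inherited from part~(a). For surjectivity, given $T\in\End_G(\caC_c(G,\E)^\caO)$, set $f:=T(I_\caO)$. By definition $f\in\caC_c(G,\E)^\caO$; the $G$-equivariance of $T$ combined with $\omega\ldot I_\caO=I_\caO$ for $\omega\in\caO$ forces $\omega\ldot f=f$, placing $f$ in ${}^\caO\caC_c(G,\E)^\caO=\caH(G,\caO)_\E$. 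A short calculation (again using the left $\caO$-invariance of $f$ after substituting $\omega=y\omega'$) shows $[y\dbr\ast f=y\ldot f$, hence $\phi(f)([y\dbr)=y\ldot f=y\ldot T(I_\caO)=T([y\dbr)$, and $\E$-linearity together with Fact~\ref{fact:perm}(a) yields $\phi_\ast(f)=T$.

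The principal obstacle is the $\ast$-property in part~(a): it is the only step where the involution $\argu^\ast$, the Hilbert inner product, and the non-trivial modular function $\Delta$ must interact simultaneously, and the disappearance of $\Delta$ from the final identity is not immediate from the definitions — it is forced by a careful matching of the two left-translation substitutions. Every other step is essentially a reorganisation of estimates and decompositions already recorded in Subsections~\ref{ss:L1star} and~\ref{ss:l12}.
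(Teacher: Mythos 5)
Your proposal is correct and follows essentially the same route as the paper: boundedness and continuity from the estimates \eqref{eq:decoR3}--\eqref{eq:decoR5}, associativity and the unit law via the Fubini argument of Proposition~\ref{prop:conv}(a), injectivity by evaluating at $I_\caO$, the adjoint identity $\langle\beta\ast f,\gamma\rangle=\langle\beta,\gamma\ast f^\ast\rangle$ by Fubini, and for (b) the observation that an equivariant endomorphism of $\caC_c(G,\E)^{\caO}$ is determined by its value at $[1\dbr$. The only cosmetic difference is in the adjoint identity, where the paper avoids your two substitutions by swapping the order of integration once and using $\overline{f^\ast(\tau^{-1}\omega)}=f(\omega^{-1}\tau)$ directly, so the modular function never enters in either version.
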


\begin{proof}
(a) By the previously mentioned remark, convolution induces a map
\begin{equation}
\label{eq:act3}
\argu\ast\argu\colon L^2(G,\C)^\caO\times {}^\caO 
L^1_\ast(G,\C)^\caO\longrightarrow L^2(G,\C)^\caO.
\end{equation}
Moreover, the argument used in the proof of Proposition~\ref{prop:conv}(a) can be used verbatim for showing that
\[(\beta\ast f)\ast h=\beta\ast (f\ast h)\]
for $\beta\in L^2(G,\C)^\caO$ and $f,h\in {}^\caO L^1_\ast(G,\C)^\caO$, and
$\beta\ast \dbl 1 \dbr=\beta$, i.e., \eqref{eq:act3} defines a continuous right 
action
of ${}^\caO L^1_\ast(G,\C)^\caO$ on $L^2(G,\C)^\caO$ (cf.~\eqref{eq:decoR3} and \eqref{eq:decoR5}).
In particular, for $f\in {}^\caO L^1_\ast(G,\C)^\caO$, the mapping
\[\argu\ast f\colon L^2(G,\C)^\caO\to L^2(G,\C)^\caO\]
is a bounded linear operator.
By \cite[Rem.~20.11(i)]{hero:ah1}, it commutes with the left $G$-action on $L^2(G,\C)^{\caO}$.
The mapping
\[ [1 \dbr\ast\argu\colon {}^\caO L^1_\ast(G,\C)^\caO \longrightarrow 
L^2(G,\C)^\caO\]
coincides with the (canonical) inclusion ${}^\caO L^1_\ast(G,\C)^\caO \subseteq L^2(G,\C)^\caO$
(cf. Remark~\ref{rem:L1star}). Hence $\phi$ is injective. Let $f\in {}^\caO L^1_\ast(G,\C)^\caO$ and 
$\beta,\gamma\in L^2(G,\C)^\caO$. Then, by \eqref{eq:sequilin}, \eqref{eq:conv} and Fubini's theorem
\begin{align}
\langle \beta\ast f,\gamma\rangle&=\int_G\Big(\int_G \beta(\omega)\cdot f(\omega^{-1}\tau)\,\dd\mu_\caO(\omega)\Big)\cdot
\overline{\gamma(\tau)}\,\dd\mu_\caO(\tau)\notag\\
&=\int_G \beta(\omega)\cdot\Big(\int_G f(\omega^{-1}\tau)\cdot 
\overline{\gamma(\tau)}\,\dd\mu_\caO(\tau)\Big)\,\dd\mu_\caO(\omega)\notag\\
&=\int_G \beta(\omega)\cdot\Big(\int_G \overline{\gamma(\tau)}\cdot\overline{f^\ast(\tau^{-1}\omega)}
\,\dd\mu_\caO(\tau)\Big)\,\dd\mu_\caO(\omega)\notag\\
&=\langle \beta, \gamma\ast f^\ast\rangle.\label{eq:phiast}
\end{align}
In particular, $\phi$ is a morphism of Banach $\ast$-algebras.

\noindent
(b) By (a) and \eqref{eq:deco1}, one has for $u,s\in G$ that
\[ [u\dbr\ast \dbl s\dbr=u\cdot([1\dbr\ast \dbl s\dbr)
=u\cdot \dbl s\dbr
=\sum_{1\leq j\leq n(s)} [us_j^l\dbr.\]
Hence $\phi(f)(\beta)\in \caC_c(G,\E)^\caO$ for $f\in\caH(G,\caO)_{\E}$ and $\beta\in  \caC_c(G,\E)^\caO$
(cf. Fact~\ref{fact:perm}(a)). Thus, by (a), $\phi(f)\in\End_G(\caC_c(G,\E)^{\caO})$, and it suffices to show that
$\phi_\ast$ is surjective. Let $\alpha\in\End_G(\caC_c(G,\E)^{\caO})$.
Then $\alpha([1\dbr)\in\caC_c(G,\E)^{\caO}$. Since $\alpha$ commutes with the left $\caO$-action, and as
$\omega\cdot [1\dbr=[1\dbr$ for all $\omega\in\caO$, one has even 
$\alpha([1\dbr)\in{}^\caO\caC_c(G,\E)^{\caO}=\caH(G,\caO)_{\E}$.
Let $\gamma=\phi_\ast(\alpha([1\dbr))$. Then, by construction,
$(\alpha-\gamma)([1\dbr)=0$. Since $\alpha-\gamma$ commutes with the left $G$-action,
this yields $(\alpha-\gamma)([u\dbr)=0$ for all $u\in G$. Hence, by
Fact~\ref{fact:perm}(a), $\alpha=\gamma$ and this yields the claim.
\end{proof}

The map 
$\iota\colon \caB_G(G/\caO)\to L^2(G,\C)^{\caO}$ given by
$\iota(\alpha)=\alpha([ 1\dbr)$, $\alpha\in\caB_G(G/\caO)$, is a 
continuous homomorphism of topological $\C$-vector spaces.
It has the following property.

\begin{lem}
\label{lem:iota}
The canonical $\C$-linear map $\iota\colon \caB_G(G/\caO)\to L^2(G,\C)^{\caO}$ is injective.
\end{lem}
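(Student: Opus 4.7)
The plan is very short. Suppose $\alpha\in\caB_G(G/\caO)$ satisfies $\iota(\alpha)=\alpha([1\dbr)=0$; I will show $\alpha=0$ by evaluating on the orthonormal Hilbert basis provided by Fact~\ref{fact:suppL2}(a).

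First I would record the identity $[u\dbr=u\ldot[1\dbr$ for every $u\in G$, which is immediate from the definitions since $I_{u\caO}(x)=1\iff u^{-1}x\in\caO\iff I_{\caO}(u^{-1}x)=1$, i.e.\ $u\ldot I_{\caO}=I_{u\caO}$. Next, because $\alpha\in\caB_G(G/\caO)$ commutes with the left $G$-action,
\begin{equation*}
\alpha([u\dbr)=\alpha(u\ldot[1\dbr)=u\ldot\alpha([1\dbr)=0
\end{equation*}
for every $u\in G$. Picking a set $\caS\subseteq G$ of representatives of $G/\caO$, this means $\alpha$ vanishes on the orthonormal Hilbert basis $\{[u\dbr\mid u\in\caS\}$ of $L^2(G,\C)^{\caO}$.

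Finally, $\alpha$ is a bounded, hence continuous, $\C$-linear operator, so it vanishes on the closed $\C$-linear span of $\{[u\dbr\mid u\in\caS\}$, which by Fact~\ref{fact:suppL2}(a) is all of $L^2(G,\C)^{\caO}$. Therefore $\alpha=0$, proving injectivity of $\iota$. There is no real obstacle here; the only point worth being a little careful about is that one needs the full $\ell^2$-Hilbert basis (not just the algebraic span) to conclude vanishing everywhere, and this is exactly what Fact~\ref{fact:suppL2}(a) together with continuity of $\alpha$ supplies.
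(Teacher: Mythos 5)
Your proof is correct and follows essentially the same route as the paper: use $[u\dbr=u\ldot[1\dbr$ and $G$-equivariance of $\alpha$ to get $\alpha([u\dbr)=0$ for all $u\in G$, then conclude via Fact~\ref{fact:suppL2}(a). You merely make explicit the final step (boundedness of $\alpha$ plus density of the span of the Hilbert basis) that the paper leaves implicit.
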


\begin{proof}
Let $\alpha\in\kernel(\iota)$, i.e., $[1\dbr\in\kernel(\alpha)$. As
$\alpha([u\dbr)=u\cdot\alpha([1\dbr)$, this yields $[u\dbr\in\kernel(\alpha)$ for all $u\in G$.
Thus, by Fact~\ref{fact:suppL2}(a), $\alpha=0$. 
\end{proof}

Note that for $\alpha\in \caB_G(G/\caO)$ one has
\begin{equation}
\label{eq:norhom}
\nor \iota(\alpha)\nor_2=\nor \alpha([1\dbr)\nor_2\leq \nor\alpha\nor.
\end{equation}


\subsection{The C${}^\ast$-Hecke algebra $\bcH(G,\caO)$}
\label{ss:bcH}
By \eqref{eq:Hec1}, Proposition~\ref{prop:act} and Lemma~\ref{lem:iota}, one 
has canonical injections
\begin{equation}
\label{eq:homs}
\xymatrix{
\caH(G,\caO)_\C^{\op}\ar@{^(->}[r]&\big({}^\caO L^1_\ast(G,\C)^{\caO}\big)^{\op}\ar[r]^-{\phi}\ar[r]&\caB_G(G/\caO)\ar[r]^-{\iota}& L^2(G,\C)^{\caO}.}
\end{equation}
We denote by $\bcH(G,\caO)\subseteq\caB_G(G/\caO)$ the closure of 
$\phi(\caH(G,\caO)_\C^{\op})$ in $\caB_G(G/\caO)$. 

Thus, by definition, $\bcH(G,\caO)$ together with the standard operator norm $\nor\argu\nor\colon \bcH(G,\caO)\to\R^+_0$
is a $C^\ast$-subalgebra of $\caB_G(G/\caO)$ which we will call the {\it $C^\ast$-Hecke algebra} associated to $(G,\caO)$. 
\begin{rem}
    Since the weak operator topology is coarser than the operator norm topology, the
map \eqref{eq:homs} remains continuous if $\caB_G(G/\caO)$ is carrying the weak operator topology.
Denote by $W(G,O)$ the $W^\ast$-algebra generated by $\phi(\caH(G,\caO)_\C^{\op})$, which
will be called the {\em $W^\ast$-algebra associated to the Hecke pair $(G, O)$} (cf.~\cite[\S~3.4]{CMW22}, \cite[\S~4.2]{pst:betti})
\end{rem}
By construction, the unital $C^\ast$-algebra $\bcH(G,\caO)$ is acting on the complex Hilbert space $L^2(G,\C)^{\caO}$  (cf. Section~\ref{ss:CHil}).
\begin{thm}\label{thm:1special} Let $G$ be a unimodular t.d.l.c.~group and let $\ca O\subseteq G$ be a compact open subgroup of $G$. The element $[1\dbr=I_\caO$ is a special element of the complex Hilbert space $L^2(G,\C)^{\caO}$.
\end{thm}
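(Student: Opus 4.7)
The plan is to check the three defining properties \ref{eq:S1}, \ref{eq:S2}, \ref{eq:S3} of a special element for $\delta=[1\dbr=I_\caO$ with respect to the action of $\bcH(G,\caO)$ on $L^2(G,\C)^{\caO}$. Properties \ref{eq:S1} and \ref{eq:S2} should be essentially immediate from what has already been established, while \ref{eq:S3} will require a density argument and will be the step where unimodularity of $G$ enters.

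For \ref{eq:S1} one simply computes $\langle I_\caO,I_\caO\rangle=\int_G I_\caO(\omega)\,\dd\mu_\caO(\omega)=\mu_\caO(\caO)=1$ by the normalisation of $\mu_\caO$. For \ref{eq:S2} note that since $\bcH(G,\caO)\subseteq\caB_G(G/\caO)$, the canonical map $a\mapsto a.[1\dbr=\iota(a)$ is just the restriction of the map $\iota$ of Lemma~\ref{lem:iota} to $\bcH(G,\caO)$, and Lemma~\ref{lem:iota} provides injectivity.

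The main step is \ref{eq:S3}. By definition $\bcH(G,\caO)$ is the operator-norm closure of $\phi(\caH(G,\caO)_\C^{\op})$, so I would first verify \ref{eq:S3} on this dense subalgebra and then propagate it to the closure by continuity. For $a=\phi(f)$ and $b=\phi(g)$ with $f,g\in\caH(G,\caO)_\C$, the contravariance of $\phi$ gives $ab=\phi(g\ast f)$ and $ba=\phi(f\ast g)$, so using that $[1\dbr=I_\caO$ is the convolution unit of ${}^\caO L^1_\ast(G,\C)^\caO$ (Proposition~\ref{prop:conv}(a)) one obtains $ab.[1\dbr=g\ast f$ and $ba.[1\dbr=f\ast g$. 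Since both $g\ast f$ and $f\ast g$ are left $\caO$-invariant, the inner products collapse to point evaluations at $1$:
\begin{equation*}
\langle ab.[1\dbr,[1\dbr\rangle=(g\ast f)(1)=\int_G g(\omega)\,f(\omega^{-1})\,\dd\mu_\caO(\omega),
\end{equation*}
and analogously for $ba$. Under unimodularity the involution $\omega\mapsto\omega^{-1}$ preserves the Haar integral by \eqref{eq:int3}, so substituting $\omega\mapsto\omega^{-1}$ turns the first integral into $\int_G f(\tau)\,g(\tau^{-1})\,\dd\mu_\caO(\tau)=(f\ast g)(1)$, which yields \ref{eq:S3} on the dense subalgebra.

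To pass to the closure, I would use that the bilinear form $F(a,b)=\langle ab.[1\dbr,[1\dbr\rangle-\langle ba.[1\dbr,[1\dbr\rangle$ is separately (indeed jointly) operator-norm continuous on $\caB_G(G/\caO)\times\caB_G(G/\caO)$, because $|F(a,b)|\leq 2\,\nor a\nor\,\nor b\nor\,\nor[1\dbr\nor_2^{\,2}$ and multiplication is jointly norm continuous. Since $F$ vanishes on the dense set $\phi(\caH(G,\caO)_\C^{\op})\times\phi(\caH(G,\caO)_\C^{\op})$ it vanishes on $\bcH(G,\caO)\times\bcH(G,\caO)$, completing \ref{eq:S3}. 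The only delicate point is the unimodularity input in the final substitution; every other step is formal or already packaged in earlier results of the section.
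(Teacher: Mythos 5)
Your proof is correct and follows essentially the same route as the paper: (S2) comes from Lemma~\ref{lem:iota}, and (S3) is checked on the dense subalgebra $\phi(\caH(G,\caO)_\C^{\op})$ using that $[1\dbr=\dbl 1\dbr$ is the convolution unit together with the inversion-invariance of the Haar integral (unimodularity), then propagated to $\bcH(G,\caO)$ by norm continuity. The only cosmetic differences are that you evaluate $(g\ast f)(1)=(f\ast g)(1)$ directly rather than routing the computation through the adjoint identity \eqref{eq:phiast} on the generators $\dbl s\dbr$, and that you make explicit the density/continuity step which the paper leaves implicit.
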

\begin{proof} Lemma~\ref{lem:iota} implies that  $[1\dbr$ satisfies the 
condition \ref{eq:S2} of being a special element (cf. Section 
\ref{ss:special}). 
Since $\phi(\ca H(G,\caO)_\C)$ is dense in  $\bcH(G,\caO)$, it suffices to 
verify the 
condition \ref{eq:S3} on the set $\{\phi(\dbl s\dbr)\mid s\in G\}\subseteq 
\bcH(G,\caO)$ (cf.~\eqref{eq:Hec1}). Let $s,t\in G$. Since $G$ is unimodular,  
one computes
\begin{alignat}{2}
\langle [1\dbr \ast \dbl t\dbr,[1\dbr\ast \dbl s\dbr^*\rangle&=\langle \dbl t\dbr, \dbl s^{-1}\dbr\rangle\notag\\
&=\int_G I_{\caO t\caO}(\omega) \cdot I_{\caO s^{-1}\caO}(\omega )\,\dd \mu_\caO(\omega) \notag\\
&=\int_G I_{\caO t\caO}(\omega) \cdot I_{\caO s\caO}(\omega^{-1} )\,\dd \mu_\caO(\omega)\notag\\
&=\int_G I_{\caO t\caO}(\omega^{-1}) \cdot I_{\caO s\caO}(\omega)\cdot\Delta(\omega^{-1})\,\dd \mu_\caO(\omega)\notag\\
&=\int_G I_{\caO t^{-1}\caO}(\omega) \cdot I_{\caO s\caO}(\omega)\,\dd \mu_\caO(\omega)\notag\\
&=\langle \dbl s\dbr, \dbl t^{-1}\dbr\rangle=\langle [1\dbr\ast \dbl s\dbr,[1\dbr\ast \dbl t\dbr^*\rangle\notag
\end{alignat}
(cf. proof of Proposition~\ref{prop:conv}(a)). In particular, the equality~\eqref{eq:phiast} yields 
\begin{alignat}{2}
\langle \phi(\dbl s\dbr\ast\dbl t\dbr)\,.\, [1\dbr, [1\dbr\rangle&=\langle [1\dbr \ast \dbl t\dbr,[1\dbr\ast \dbl s\dbr^*\rangle\notag\\
&=\langle [1\dbr\ast \dbl s\dbr,[1\dbr\ast\dbl t\dbr^*\rangle=\langle \phi(\dbl 
t\dbr\ast\dbl s\dbr)\,.\, [1\dbr, [1\dbr\rangle,\notag
\end{alignat}
i.e., $[1\dbr$ is a special element.
\end{proof}
Therefore, one obtains the trace function 
$$\tau_{[1\dbr}\colon \bcH(G,\caO)\to\C,\quad \tau_{[1\dbr}(h)=\langle h\,.\,[1\dbr,[1\dbr\rangle,\quad h\in \bcH(G,\caO)$$
which is faithful and positive; see Section~\ref{ss:special}.
Moreover, from Theorem~\ref{thm:traceidem} one deduces the following.

\begin{cor}
\label{cor:hecidem}
Let $\caO$ be a compact open subgroup of the unimodular
t.d.l.c.~group $G$, and let $e\in 
\phi(\caH(G,\caO)_\E^{\op})\subseteq\bcH(G,\caO)$ be an idempotent.
Then $\tau_{[1\dbr}(e)\in\E\cap[0,1]$, and $\tau_{[1\dbr}(e)=0$ if and only if $e=0$.
\end{cor}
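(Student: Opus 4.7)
The plan is to split the statement into two parts: the inclusion $\tau_{[1\dbr}(e)\in[0,1]$ together with faithfulness on idempotents, which I would extract from the machinery already in place; and the $\E$-integrality, which I would reduce to a short computation on the distinguished $\E$-basis of the Hecke algebra.

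For the first part, I would invoke Theorem~\ref{thm:1special} to conclude that $[1\dbr$ is a special element of $L^2(G,\C)^{\caO}$, and then apply Proposition~\ref{prop:spec} to obtain that $\tau_{[1\dbr}$ is a positive faithful trace on the unital $C^\ast$-algebra $\bcH(G,\caO)$, with $\tau_{[1\dbr}(1)=\langle[1\dbr,[1\dbr\rangle=\mu_\caO(\caO)=1$. Feeding this into Theorem~\ref{thm:traceidem} for the idempotent $e$ immediately yields $\tau_{[1\dbr}(e)\in[0,1]$ together with the equivalence $\tau_{[1\dbr}(e)=0\iff e=0$.

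For the integrality, I would fix a set $\caR$ of representatives of $\caO\backslash G/\caO$ containing $1$, so that $\{\dbl r\dbr\mid r\in\caR\}$ is an $\E$-basis of $\caH(G,\caO)_\E$, and write $e=\phi(f)$ with $f=\sum_{r\in\caR}\nu_r\dbl r\dbr$ a finite $\E$-linear combination. Since $\dbl 1\dbr=[1\dbr$ is the convolution unit (Proposition~\ref{prop:conv}(a)), one has $e([1\dbr)=[1\dbr\ast f=f$, so
\[
\tau_{[1\dbr}(e)=\langle f,[1\dbr\rangle=\sum_{r\in\caR}\nu_r\,\mu_\caO(\caO r\caO\cap\caO).
\]
Because distinct $\caO$-double cosets are disjoint, the intersection $\caO r\caO\cap\caO$ equals $\caO$ when $r=1$ and is empty otherwise, giving $\tau_{[1\dbr}(e)=\nu_1\in\E$.

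No step presents a real obstacle: the analytic content is already packaged in Theorem~\ref{thm:traceidem} combined with the special-element property of $[1\dbr$ from Theorem~\ref{thm:1special}, and the $\E$-integrality reduces to the orthogonality of the Hecke-algebra basis against $[1\dbr$ in the $L^2$-inner product. The mild care needed is in tracking the opposite-algebra convention in $\phi\colon\caH(G,\caO)^{\op}\to\caB_G(G/\caO)$ and remembering that $\dbl 1\dbr=[1\dbr$ is simultaneously the trace vector and the algebra unit.
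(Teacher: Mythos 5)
Your proposal is correct and follows essentially the same route as the paper: the range $[0,1]$ and faithfulness come from the special-element property of $[1\dbr$ (Theorem~\ref{thm:1special}, Proposition~\ref{prop:spec}) fed into Theorem~\ref{thm:traceidem}, while the paper obtains the $\E$-integrality by the equivalent basis-wise computation $\tau_{[1\dbr}(\phi(\dbl s\dbr))=I_{\caO s\caO}(1)\in\{0,1\}$, which is the same double-coset disjointness argument you perform with $\langle f,[1\dbr\rangle=\nu_1$.
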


\begin{proof} 
For $s\in G$, one has
\begin{equation}
\label{eq:cts}
\tau_{[1\dbr}(\phi(\dbl s\dbr))=\langle [1\dbr\ast\dbl s\dbr,[1\dbr\rangle=I_{\caO s\caO}(1)=\begin{cases}1&\text{if\ }s\in\ca O\\ 0&\text{otherwise}\end{cases}.
\end{equation}
In particular, for $h\in \phi(\caH(G,\caO)_\E^{\op})$ one has $\tau_{[1\dbr}(h)\in\E$.
Hence Theorem~\ref{thm:traceidem} yields the claim.
\end{proof}

\begin{rem}
\label{rem:Zform}
By Corollary \ref{cor:hecidem}, any idempotent of $\caH(G,\caO)_\Z$ must be 
$0$ or $1$, i.e., the ring $\caH(G,\caO)_\Z$ is directly indecomposable.
\end{rem}

\begin{rem}
\label{rem:hecke}
The interested reader might have wondered why we have called $\bcH(G,\caO)$ the
$C^\ast$-Hecke algebra associated to $(G,\caO)$ and not the
$C^\ast$-algebra $\caB_G(G/\caO)$. The answer to this question is given
by the fact that, if $G$ is unimodular, then $\tau_{[1\dbr}$ defines
a trace function on $\bcH(G,\caO)$ but we could not find any reason why
this should be true for $\caB_G(G/\caO)$.
Actually, in general, $\alpha_1(\alpha_2([1\dbr))(1)$ is not equal to 
$\alpha_2(\alpha_1([1\dbr))(1)$ for $\alpha_1,\alpha_2\in\caB_G(G/\caO)$.
\end{rem}

\section{Hattori--Stallings rank of  projective discrete  $\QG$-modules}
\label{ss:ratdis}
For a t.d.l.c.~group $G$, a left $G$-set $\Omega$ is said to be {\it discrete} if the pointwise stabilizer $G_\omega$ is an open subgroup of $G$ for every $\omega\in\Omega$. A {\it rational discrete left $G$-module} $M$ is a left $\QG$-module that is discrete as a $G$-set. 
  E.g., for every discrete left $G$-set $\Omega$, the permutation module $\Q[\Omega]$ is a discrete left $\QG$-module.
Denote by $\QGmod$ the abelian category of left $\QG$-modules. The full subcategory $\QGdis$ of $\QGmod$, whose objects are the discrete left $\QG$-modules, turns out to be an abelian category with both enough injectives and projectives (cf.~\cite[Prop.~3.2]{cw:qrat}). In particular one has the following characterization.
\begin{prop}[\protect{\cite[Corollary 3.3]{cw:qrat}}]\label{prop:proj} Let $G$ be a t.d.l.c.~group. A  discrete left $\QG$-module $M$ is projective in $\QGdis$ if, and only if, $M$ is a direct summand of a discrete $\QG$-permutation module $\Q[\Omega]$ for some discrete left $G$-set $\Omega$ with compact pointwise stabilizers.
\end{prop}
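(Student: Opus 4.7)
The plan is to prove the two implications separately, each time reducing to the basic permutation modules $\Q[G/\caO]$ for $\caO\subseteq G$ compact open.

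For sufficiency, direct summands of projectives are projective and arbitrary coproducts of projectives in the abelian category $\QGdis$ are projective; decomposing $\Omega$ into $G$-orbits thus reduces the claim to showing that $\Q[G/\caO]$ is projective for every compact open subgroup $\caO\subseteq G$. I would use the natural isomorphism
\[
\Hom_{\QG}(\Q[G/\caO],M)\;\cong\; M^{\caO},\qquad M\in\QGdis,
\]
which identifies projectivity of $\Q[G/\caO]$ with exactness of the fixed-point functor $(-)^{\caO}$ on $\QGdis$. The key observation is that for any $M\in\QGdis$ and $m\in M$ the stabilizer $G_m$ is open, so the orbit $\caO\cdot m \cong \caO/(\caO\cap G_m)$ is finite because $\caO$ is compact. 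Given a surjection $\pi\colon N\twoheadrightarrow M$ in $\QGdis$ and $m\in M^{\caO}$, I pick any preimage $n\in N$ and form the $\Q$-average
\[
n':=\frac{1}{|\caO\cdot n|}\sum_{x\in\caO\cdot n}x\;\in\; N^{\caO}.
\]
Since $m\in M^{\caO}$ one has $\pi(n')=m$; hence $(-)^{\caO}$ preserves surjections, and $\Q[G/\caO]$ is projective.

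For necessity, I would produce an explicit surjection onto $M$ from a permutation module of the required type. For each $m\in M$ the stabilizer $G_m$ is open in $G$, and by van Dantzig's theorem it contains some compact open subgroup $\caO_m\subseteq G$. Setting $\Omega:=\bigsqcup_{m\in M}G/\caO_m$, whose point stabilizers are conjugates of the $\caO_m$ and hence compact open, the $\QG$-linear map $\Q[\Omega]\to M$ sending $g\caO_m\mapsto g\cdot m$ is well defined, because $\caO_m\subseteq G_m$, and surjective, because every $m\in M$ lies in the image. If $M$ is projective, the identity on $M$ lifts along this surjection, splitting it and exhibiting $M$ as a direct summand of $\Q[\Omega]$.

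The main obstacle is conceptual rather than technical: one must recognize that the finiteness of $\caO$-orbits on discrete modules, rather than any measure-theoretic or analytic input, is what makes the averaging retraction work, and this is precisely why rational (as opposed to integral) coefficients are essential. Beyond this observation and the single use of van Dantzig's theorem to produce compact open subgroups inside arbitrary open subgroups, the argument is formal.
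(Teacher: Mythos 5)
Your proof is correct. Note that the paper itself offers no argument for this statement: it is quoted from \cite[Corollary~3.3]{cw:qrat}, so the comparison is with the proof given there. Your route is essentially the standard one from that source, only packaged more directly: the cited reference establishes projectivity of $\Q[G/\caO]$ by observing that $\Hom_G(\Q[G/\caO],-)\cong(-)^{\caO}$ and that taking $\caO$-fixed points is exact on discrete modules, which is exactly your averaging argument over the finite $\caO$-orbits (finiteness coming from openness of stabilizers plus compactness of $\caO$, and divisibility from the rational coefficients); the converse there is likewise the canonical surjection $\coprod_m \Q[G/\caO_m]\twoheadrightarrow M$ with $\caO_m$ a compact open subgroup inside the open stabilizer $G_m$, split by projectivity. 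The only cosmetic difference is that the reference phrases the projectivity of $\Q[G/\caO]$ through the induction functor from the compact subgroup $\caO$ (left adjoint to the exact restriction), whereas you verify the lifting property by hand; both hinge on the same finiteness-plus-averaging observation you correctly single out as the crux.
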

\subsection{The rational discrete standard bimodule}
\label{ss:stand}
For a t.d.l.c.~group $G$, let $\CO(G)$ be the set of all compact open subgroups 
of $G$. For every $U,V\in\CO(G)$ let 
\[|U:V|=\frac{|U:U\cap V|}{|V:U\cap V|}\in\Q^+\]
be their commensurability index.

By inverse inclusion, $(\CO(G),\subseteq)$ is clearly a directed set. Following 
\cite{cw:qrat}, whenever $U$ and $V$ are in $\CO(G)$ such that $V\subseteq U$, one 
may define an injective mapping
\begin{equation}\label{eq:tran}
\eta_{U,V}\colon\Q[G/U]\to\Q[G/V],\quad \eta_{U,V}(gU)=\frac{1}{|U:V|}\sum_{r\in\caR}grV,\quad g\in G
\end{equation}
of discrete left $\QG$-permutation modules, where $\caR$ is a set of coset representatives of $V$ in $U$. Since $\eta_{U,W}=\eta_{V,W}\circ\eta_{U,V}$ for $W\subseteq V\subseteq U$ and $W\in\CO(G)$, by \eqref{eq:tran}, one obtains a direct system of projective discrete $\QG$-permutation modules over $\CO(G)$. The discrete left $\QG$-module given by
\begin{equation}\label{eq:BG}
\Bi(G)=\varinjlim_{U,V\in\CO(G)}(\Q[G/U],\eta_{U,V}),
\end{equation}
is called the {\it rational discrete standard bimodule of $G$}. Indeed $\Bi(G)$ inherits a compatible structure of discrete right $\QG$-module via the isomorphisms
\begin{equation}
c_{U,g}\colon\Q[G/U]\to \Q[G/U^g],\quad c_{U,g}(xU)=xgU^g,
\end{equation}
where $U^g=g^{-1}Ug$ for $U\in\CO(G)$ and $g\in G$. Moreover, for every rational discrete $\QG$-module $M$, there exists an isomorphism
\begin{equation}
\theta_M\colon \Bi(G)\otimes_G M\to M,
\end{equation}
which, for all $U\in \CO(G),g\in G$ and $m\in M$, can be described as follows:
\begin{equation}
\theta_M(gU\otimes m)= \frac{1}{|U:U\cap G_m|}\sum_{r\in\ca R}gr\cdot m,
\end{equation}
where $G_m$ is the stabiliser of $m$ and $\ca R$ is a finite set of representatives of left 
cosets of the open set $U\cap G_m$ in the compact subgroup $U$ of $G$ (see 
\cite[\S~4.2]{cw:qrat}).

\medskip
Let $\ca{C}_c(G,\Q)$ be the set of all continuous functions from $G$ to $\Q$ with compact support, where $\Q$ is endowed with the discrete topology. Thus $\caC_c(G,\Q)$ consists of locally constant functions from $G$ to $\Q$ with compact support and
\begin{equation*}\label{eq:CcG}
\caC_c(G,\Q)=\mathrm{span}_\Q\{I_{gU}\mid g\in G, U\in\CO(G)\},
\end{equation*}
where $I_{gU}$ denotes the characteristic function of the coset $gU$ (cf.~\S~\ref{ss:funcomsupp}). 
Since $\caC_c(G,\Q)$ comes equipped with the $G$-actions given by
\begin{equation*}
(g\cdot f)(x)=f(g^{-1}x),\quad (f\cdot g)(x)=f(xg^{-1}),\quad g,x\in G, f\in\caC_c(G,\Q),
\end{equation*}
it is a discrete $\QG$-bimodule. Moreover, the rational discrete left $G$-module $\Bi(G)$ is isomorphic to the left $\Q[G]$-module $\caC_c(G,\Q)$, but the isomorphism is non-canonical: for a fixed compact open subgroup $\caO$ of $G$, one has an isomorphism of rational discrete left $G$-modules
\begin{equation}\label{eq:left iso}
\psi^{(\caO)}\colon\Bi(G)\to\caC_c(G,\Q),
\end{equation}
induced by $\psi^{(\caO)}(gU)=|\caO:U|\cdot I_{gU}$ for any $U\in\CO(G)$.

 If the group $G$ is assumed to be unimodular, then $\psi^{(\caO)}$ turns out to be a (non-canonical) isomorphism of right $\QG$-modules as well (cf.~\cite[Proposition 4.11 (b)]{cw:qrat}).
\subsection{The trace function on $\Bi(G)$}
\label{ss:trace}
Let $G$ be a unimodular t.d.l.c.~group. For a compact open subgroup $\caO$, let 
$\mu_\caO$ denote the Haar measure of $G$ satisfying $\mu_\caO(\caO)=1$. In 
particular, $\mu_\caO=|U:\caO |\cdot\mu_U$ for every $U\in\CO(G)$. We also 
denote by 
\begin{equation}\label{eq:h(G)}
\boh(G)=\Q\cdot\mu_\caO.
\end{equation}
the 1-dimensional $\Q$-vector space describing all Haar measures $\mu_\caO$ where $\caO$ ranges in $\CO(G)$. By the isomorphism \eqref{eq:left iso}, one has a $\Q$-linear map
\begin{equation}\label{eq:trace map}
\tr=\psi^{(\caO)}(\argu)(1)\cdot\mu_{\ca O}\colon\Bi(G)\to\boh(G),
\end{equation}
which is independent on the choice of the compact open subgroup $\caO$, i.e., 
for all $x\in\Bi(G)$ and $\caO, U\in\CO(G)$ one has
\begin{equation}
\tr(x)=\psi^{(\caO)}(x)(1)\cdot\mu_\caO= 
\psi^{(U)}(x)(1)\cdot\mu_{U}\in\boh(G),
\end{equation}
since $\psi^{(U)}=\mu_\caO(U)\cdot\psi^{(\caO)}$ (cf.~\eqref{eq:left iso}).
\begin{prop}\label{prop:tr} 
Let $G$ be a unimodular t.d.l.c.~group. Then
$$\tr(g\cdot u)=\tr(u\cdot g)\quad\text{for all $u\in\Bi(G)$ and $g\in G$}.$$
Moreover, putting $\underline\Bi(G)=\Bi(G)/\langle g\cdot u-u\cdot g\mid u\in\Bi(G), g\in G\rangle_\Q$ the $\Q$-linear map $\tr$ induces the $\Q$-linear map $\underline\tr\colon\underline\Bi(G)\to\boh(G).$
\end{prop}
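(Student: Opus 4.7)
The plan is to reduce to generators, carry out a direct computation on the permutation modules $\Q[G/U]$, and then invoke the universal property of the quotient for the second assertion.

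First, I would use the fact that $\Bi(G)$ is a direct limit of discrete permutation $\QG$-modules $\Q[G/U]$ with $U\in\CO(G)$, so by $\Q$-linearity it suffices to verify the identity $\tr(g\cdot u)=\tr(u\cdot g)$ for $u=hU$ with $h\in G$ and $U\in\CO(G)$. Pick any compact open subgroup $\caO$; since $\tr$ is independent of this choice, we compute both sides through the left-$\QG$-module isomorphism $\psi^{(\caO)}$ of \eqref{eq:left iso}. By the explicit formula $\psi^{(\caO)}(hU)=|\caO:U|\cdot I_{hU}$, we have
\begin{equation*}
\tr(g\cdot hU)=\tr(ghU)=|\caO:U|\cdot I_{ghU}(1)\cdot\mu_\caO,
\end{equation*}
which is $|\caO:U|\cdot\mu_\caO$ when $gh\in U$ and $0$ otherwise.

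For the right side, recall that the right $G$-action on $\Bi(G)$ is realised through the isomorphisms $c_{U,g}\colon \Q[G/U]\to\Q[G/U^g]$, $hU\mapsto hg\cdot U^g$, where $U^g=g^{-1}Ug$. Hence $hU\cdot g$ is represented by $hg\cdot U^g\in\Q[G/U^g]$, and
\begin{equation*}
\tr(hU\cdot g)=|\caO:U^g|\cdot I_{hgU^g}(1)\cdot\mu_\caO.
\end{equation*}
The indicator condition $I_{hgU^g}(1)=1$ is equivalent to $hg\in g^{-1}Ug$, i.e.\ $gh\in U$, so it matches the indicator condition arising from the left-hand side. It remains to check that the coefficients $|\caO:U|$ and $|\caO:U^g|$ coincide: this is exactly where unimodularity enters. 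Indeed, by simultaneous left- and right-invariance of $\mu_\caO$ one has $\mu_\caO(U^g)=\mu_\caO(g^{-1}Ug)=\mu_\caO(U)$, and using the relation $\mu_\caO=|V:\caO|\cdot\mu_V$ this forces $|\caO:U^g|=|\caO:U|$. Combining the three observations gives $\tr(g\cdot hU)=\tr(hU\cdot g)$ on generators, and the identity extends to all of $\Bi(G)$ by $\Q$-linearity.

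For the second assertion, since the $\Q$-linear map $\tr\colon\Bi(G)\to\boh(G)$ vanishes on every generator of the subspace $\langle g\cdot u-u\cdot g\mid u\in\Bi(G),g\in G\rangle_\Q$ by what was just proved, it vanishes on the whole subspace; the universal property of the quotient then yields the induced $\Q$-linear map $\underline{\tr}\colon\underline{\Bi(G)}\to\boh(G)$. The only non-routine step is the verification that the commensurability index is preserved under conjugation, which is precisely where the unimodularity hypothesis is used; everything else is a careful bookkeeping of the definitions of the left and right actions on the colimit.
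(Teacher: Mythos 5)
Your proof is correct, but it takes a different route from the paper's. The paper disposes of the identity in two lines: since $G$ is unimodular, $\psi^{(\caO)}\colon\Bi(G)\to\caC_c(G,\Q)$ is an isomorphism of $\QG$-\emph{bimodules} (this is the cited \cite[Proposition~4.11(b)]{cw:qrat}), so the claim reduces to the observation $(g\cdot f)(1)=f(g^{-1})=(f\cdot g)(1)$ for $f\in\caC_c(G,\Q)$; here all the unimodularity input is outsourced to the cited right-equivariance of $\psi^{(\caO)}$. You instead work directly on the colimit: you evaluate $\tr$ on a coset representative $hU$ for both the left action and the right action realised through $c_{U,g}$, check that the two indicator conditions both read $gh\in U$, and use unimodularity in the form $\mu_\caO(U^g)=\mu_\caO(U)$ (equivalently, conjugation preserves the commensurability index) to match the coefficients $|\caO:U|$ and $|\caO:U^g|$. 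In effect you re-prove, at the point $1\in G$, exactly the fragment of the bimodule equivariance of $\psi^{(\caO)}$ that the trace needs. Your argument buys self-containedness --- it needs only the defining formula of $\psi^{(\caO)}$ on cosets, its well-definedness on the direct limit, and the description of the right action via $c_{U,g}$ --- at the cost of the bookkeeping on generators; the paper's argument is shorter and makes transparent that the whole statement is a shadow of the bimodule isomorphism. Your reduction to generators is legitimate since every element of $\Bi(G)$ is a $\Q$-linear combination of images of cosets and both actions and $\tr$ are $\Q$-linear, and your final step (that $\tr$ kills the subspace $\langle g\cdot u-u\cdot g\rangle_\Q$ and hence factors through $\underline\Bi(G)$) is the same routine universal-property argument the paper leaves implicit.
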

\begin{proof}
Since $\psi^{(\caO)}$ is an isomorphism of $\QG$-bimodules, it suffices to notice that $(g\cdot f)(1)=f(g^{-1})=(f\cdot g)(1)$ for all $g\in G$ and $f\in\caC_c(G,\Q)$.
\end{proof}

\subsection{The Hattori--Stallings rank}
\label{ss:hatsta}
A discrete $\QG$-module $M$ is said to be {\it finitely generated} if 
there exist finitely many elements $U_1,\ldots,U_n$ of $\CO(G)$ and a 
surjective morphism of $\QG$-modules $\coprod_{i=1}^n\Q[G/U_i] 
\twoheadrightarrow M$. Let $G$ be a unimodular t.d.l.c.~group, and let $P$ be a 
finitely generated projective discrete $\QG$-module. The evaluation map
\begin{equation}
\ev_P\colon\Hom_G(P,\Bi(G))\otimes_\Q P\to\Bi(G),
\end{equation}
given by $ \ev_P(\phi\otimes p)=\phi(p)$, for $\phi\in\Hom_G(P,\Bi(G))$ and 
$p\in P$, induces a mapping $\underline\ev_P\colon\Hom_G(P,\Bi(G))\otimes_G 
P\to\underline\Bi(G)$ such the the diagram
\begin{equation}
\xymatrix{\Hom_G(P,\Bi(G))\otimes_\Q P\ar[r]\ar[d]^{\ev_P}&\Hom_G(P,\Bi(G))\otimes_G P\ar[d]^{\underline\ev_P}\\
\Bi(G)\ar[r]&\underline\Bi(G)}
\end{equation}
commutes, where the horizontal maps are the canonical ones. Since $P$ is finitely generated and projective, by the $\Hom-\otimes$ identity established in \cite[\S 4.3]{cw:qrat}, there exists an isomorphism $\xi_{P,P}\colon \Hom_G(P,\Bi(G))\otimes_G P\to\Hom_G(P,P)$. Thus one defines the map
\begin{equation}
\xymatrix{
\rho_P\colon\Hom_G(P,P)\ar[r]^-{\xi_{P,P}^{-1}}&\Hom_G(P,\Bi(G))\otimes_G P\ar[r]^-{\underline\ev_P}&\underline\Bi(G)\ar[r]^-{\underline\tr}&\boh(G).}
\end{equation}
The value $\trho(P):=\rho_P(\mathrm{id}_P)\in\boh(G)$ will be called the \mbox{{\it Hattori--Stallings rank of $P$}.}
\begin{prop}\label{prop:HSperm} Let $G$ be a unimodular t.d.l.c.~group, and let $\caO\in\CO(G)$. Then $\trho(\Q[G/\caO])=1\cdot \mu_\caO\in\boh(G)$.
\end{prop}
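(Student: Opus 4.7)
The plan is to unpack each of the three maps in the definition of $\rho_{\Q[G/\caO]}$ applied to $\iid_{\Q[G/\caO]}$, using the natural identification $\Hom_G(\Q[G/\caO], M) \cong M^{\caO}$ given by evaluation at the coset $\caO \in G/\caO$, valid for every $M\in\ob(\QGdis)$. In particular this gives
\[
\Hom_G(\Q[G/\caO],\Bi(G)) \cong \Bi(G)^{\caO},
\qquad
\Hom_G(\Q[G/\caO],\Q[G/\caO]) \cong \Q[G/\caO]^{\caO},
\]
and under these identifications $\iid_{\Q[G/\caO]}$ corresponds to the element $\caO \in \Q[G/\caO]^{\caO}$.

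The main step is to exhibit an explicit preimage of $\iid_{\Q[G/\caO]}$ under the Hom--tensor isomorphism $\xi_{P,P}$. The natural candidate is $\varphi \otimes \caO$, where $\varphi \in \Hom_G(\Q[G/\caO],\Bi(G))$ is the canonical map from the direct system defining $\Bi(G)$, sending $\caO \mapsto \caO \in \Bi(G)$. To check this, one uses the explicit formula for $\theta_{\Q[G/\caO]}$: applied to $\caO \otimes \caO$ with $U=\caO=G_{\caO}$, the index $|U:U\cap G_{\caO}|$ is $1$ and the sum collapses to the single term $\caO$. Hence $\xi_{P,P}(\varphi\otimes\caO)$ sends $\caO\in G/\caO$ to $\caO$ and, by $G$-equivariance, agrees with $\iid_{\Q[G/\caO]}$.

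Applying $\underline{\ev_P}$ to $\varphi\otimes\caO$ gives $\varphi(\caO) = \caO$ in $\underline{\Bi(G)}$. To finish, I evaluate the trace $\underline{\tr}$ on this class using the defining isomorphism $\psi^{(\caO)}\colon \Bi(G)\to\caC_c(G,\Q)$: by \eqref{eq:left iso}, $\psi^{(\caO)}(\caO) = |\caO:\caO| \cdot I_{\caO} = I_{\caO}$, and $I_{\caO}(1)=1$. Therefore
\[
\tilde\rho(\Q[G/\caO]) \;=\; \underline{\tr}(\caO) \;=\; \psi^{(\caO)}(\caO)(1)\cdot\mu_{\caO} \;=\; 1\cdot\mu_{\caO},
\]
which is the asserted formula. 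The only real obstacle is the bookkeeping identification of $\xi_{P,P}^{-1}(\iid)$; everything else is a direct evaluation of the definitions.
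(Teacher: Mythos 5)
Your proof is correct and follows essentially the same route as the paper: both exhibit $\eta_\caO\otimes\caO$ (your $\varphi\otimes\caO$) as the explicit preimage of $\iid_{\Q[G/\caO]}$ under $\xi_{P,P}$, then push it through $\underline{\ev}_P$ and $\underline{\tr}$ to get $I_\caO(1)\cdot\mu_\caO=1\cdot\mu_\caO$. The only difference is that you spell out the verification via the formula for $\theta_{\Q[G/\caO]}$, which the paper leaves as an easy check.
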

\begin{proof} Let $\eta_\caO\colon\Q[G/\caO]\to\Bi(G)$ be the canonical embedding. One verifies easily that
$$\xi_{\Q[G/\caO],\Q[G/\caO]}(\eta_\caO\otimes_G\caO)=\mathrm{id}_{\Q[G/\caO]},$$
and 
$$\underline\ev_{\Q[G/\caO]} 
(\xi^{-1}_{\Q[G/\caO],\Q[G/\caO]}(\mathrm{id}_{\Q[G/\caO]}))=\underline\caO,$$
where $\underline\caO$ is the image of $\caO$ in $\underline\Bi(G)$. Thus by definition of $\underline\tr$ (cf. Proposition~\ref{prop:tr}), one has
$$\trho(\Q[G/\caO])=\underline\tr(\underline\caO)= I_{\ca O}(1)\cdot \mu_{\ca O}=1\cdot\mu_\caO$$
and this completes the proof.
\end{proof}
\begin{thm}\label{thm:kap}
Let $G$ be a unimodular t.d.l.c.~group and $P$ be a finitely generated projective discrete left $\QG$-module with Hattori--Stallings rank $\trho(P)$. Then $\trho(P)\geq 0$, and $\trho(P)= 0$ if and only if $P=0$.
\end{thm}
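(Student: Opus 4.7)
The plan is to present $P$ as the image of an idempotent matrix over $\caH(G,\caO)_\Q$, transfer positivity to the $C^\ast$-algebra $M_n(\bcH(G,\caO))$ by means of the faithful positive trace $\bar\tau_{[1\dbr}$, and finally match the resulting number with $\trho(P)/\mu_\caO$.

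First I would carry out the reduction. By Proposition~\ref{prop:proj}, since $P$ is finitely generated and projective, it is a direct summand of a permutation module $\Q[\Omega] = \bigoplus_{i=1}^n\Q[G/\caO_i]$ with $\caO_1,\ldots,\caO_n\in\CO(G)$. Pick any compact open $\caO\subseteq\bigcap_{i=1}^n\caO_i$. For each $i$, the averaging map
\[
\Q[G/\caO_i]\longrightarrow\Q[G/\caO],\qquad g\caO_i\longmapsto \frac{1}{[\caO_i:\caO]}\sum_{r\in\caO_i/\caO}gr\caO,
\]
is a $\QG$-linear splitting of the canonical surjection $\Q[G/\caO]\twoheadrightarrow\Q[G/\caO_i]$, so each $\Q[G/\caO_i]$ is a direct summand of $\Q[G/\caO]$. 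Hence $P$ becomes a direct summand of $Q:=\Q[G/\caO]^n$, cut out by an idempotent $\pi\in\End_G(Q)$.

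Next I would translate to the $C^\ast$-algebraic side. Proposition~\ref{prop:act}(b) combined with Fact~\ref{fact:perm}(a) gives $\End_G(\Q[G/\caO])\cong\caH(G,\caO)_\Q^{\op}$, whence $\End_G(Q)\cong M_n(\caH(G,\caO)_\Q^{\op})$. Under this isomorphism $\pi$ corresponds to an idempotent $e\in M_n(\caH(G,\caO)_\Q^{\op})$, which sits inside the unital $C^\ast$-algebra $M_n(\bcH(G,\caO))$ of Section~\ref{ss:bcH}. By Theorem~\ref{thm:1special} and Proposition~\ref{prop:spec}, $\tau_{[1\dbr}$ is a faithful positive trace on $\bcH(G,\caO)$; by Fact~\ref{fact:matrix tr} its matrix extension $\bar\tau_{[1\dbr}$ is a faithful positive trace on $M_n(\bcH(G,\caO))$. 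Applying Theorem~\ref{thm:traceidem} to $e$ yields $\bar\tau_{[1\dbr}(e)\geq 0$, with equality if and only if $e=0$, i.e., $P=0$. Equation~\eqref{eq:cts} moreover gives $\bar\tau_{[1\dbr}(e)=\sum_{i=1}^n e_{ii}(1)\in\Q$.

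Finally I would identify $\trho(P)$ with $\bar\tau_{[1\dbr}(e)\cdot\mu_\caO$. The Hattori--Stallings construction extends verbatim to a $\Q$-linear map $\widetilde\rho_Q\colon\End_G(Q)\to\boh(G)$ vanishing on commutators and satisfying $\widetilde\rho_Q(\pi')=\trho(\image(\pi'))$ for every idempotent $\pi'$. Additivity over direct sums of the Hom-tensor identity $\xi_{Q,Q}$ reduces the computation of $\widetilde\rho_Q$ on $M_n(\caH(G,\caO)_\Q^{\op})$ to the diagonal blocks, where it agrees with $\widetilde\rho_{\Q[G/\caO]}$. For the single summand, a computation extending that of Proposition~\ref{prop:HSperm} --- using \eqref{eq:left iso} and \eqref{eq:trace map} to identify $\underline\ev$ followed by $\underline\tr$ with the functional $h\mapsto h(1)\cdot\mu_\caO$ --- shows $\widetilde\rho_{\Q[G/\caO]}(\phi_\ast(h))=h(1)\cdot\mu_\caO=\tau_{[1\dbr}(\phi(h))\cdot\mu_\caO$ for every $h\in\caH(G,\caO)_\Q$. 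Combining these identifications yields $\trho(P)=\widetilde\rho_Q(\pi)=\bar\tau_{[1\dbr}(e)\cdot\mu_\caO$, and the previous step finishes the proof.

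The hardest part will be the last step. Although it ultimately consists in unwinding the Hom-tensor isomorphism $\xi_{Q,Q}$, the evaluation map $\underline\ev_Q$, and the quotient $\Bi(G)\to\underline\Bi(G)$, the reduction of $\widetilde\rho_Q$ to a matrix trace requires verifying that off-diagonal matrix entries contribute zero, which rests precisely on the relations $g\cdot u-u\cdot g\equiv 0$ in $\underline\Bi(G)$ and must be bookkept carefully. The earlier steps are direct applications of the $C^\ast$-algebraic machinery already assembled in Sections~\ref{s:Cidem} and~\ref{s:ftdlc}.
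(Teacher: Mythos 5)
Your proposal is correct and follows essentially the same route as the paper's proof: realise $P$ as the image of an idempotent in $\End_G(\Q[G/\caO]^n)\cong M_n(\caH(G,\caO)_\Q^{\op})$, identify $\trho(P)$ with the sum of diagonal Hecke-algebra entries evaluated at $1$ (i.e.\ with $\bar\tau_{[1\dbr}$ of the idempotent, times $\mu_\caO$), and conclude by Theorem~\ref{thm:traceidem} together with Fact~\ref{fact:matrix tr}. The only cosmetic differences are that you spell out the reduction from several $\caO_i$ to a single $\caO$ (which the paper leaves implicit in its use of Proposition~\ref{prop:proj}) and phrase the diagonal reduction via a trace on $\End_G(Q)$ rather than via the explicit maps $\eta_i$, both of which are harmless.
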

\begin{proof}  For some positive integer $n$ and $\ca O\in\CO(G)$, by Proposition~\ref{prop:proj}, one can express $P$ as direct summand of  $\bar P=\coprod_{i=1}^n\Q[G/\ca O]$. Let $P\underset{\pi}{\overset{\iota}{\rightleftarrows}}\bar P$ denote the canonical maps such that $\pi\,\iota=id_P$. By additivity, one has
$$\trho(P)=\rho_P(\mathrm{id}_P)=\rho_{\bar P}(\pi\,\mathrm{id}_P\,\iota).$$ 
Let $\alpha=\pi\,\mathrm{id}_P\,\iota\in\End_G(\bar P)$ and denote by $e_1,\ldots,e_n$ the basis elements of $\bar P$ whose stabilisers coincide with $\ca O$. Given the map $\eta_i\colon\bar P\to\Bi(G)$ defined by $\eta_i(e_j)=\delta_{ij}\cdot \eta_{\ca O}(\caO)$ ($i,j=1,\ldots,n$), one computes
$$\underline\ev_{\bar P}(\xi^{-1}_{\bar P,\bar P}(\alpha))
=\underline\ev_{\bar P}\big(\sum_{i=1}^n\eta_i\otimes_G \alpha(e_i)\big)
=\sum_{i=1}^n\underline\ev_{\bar P}(\eta_i\otimes_G \alpha(e_i))
=\sum_{i=1}^n\eta_i(\alpha(e_i)).$$
Therefore, $\underline\ev_{\bar P}(\xi^{-1}_{\bar P,\bar P}(\alpha))$ is the 
sum of the diagonal elements of the endomorphism $\alpha$ regarded as a matrix
with respect to the basis $e_1,\ldots,e_n$. 
In particular, $\alpha$ is an idempotent matrix with entries in the 
$C^\ast$-Hecke algebra $\bcH(G,\caO)$ associated to $(G,\caO)$ (cf. 
Fact~\ref{fact:perm}(a) and Proposition~\ref{prop:act}(b)). As a consequence, 
$$\trho(P)=\rho_{\bar P}(\alpha)=\sum_{i=1}^n\underline\tr(\eta_i(\alpha(e_i))),$$
and Theorem~\ref{thm:traceidem} applies (cf. Fact~\ref{fact:matrix tr}).
\end{proof}
\begin{rem}\label{rem:HS disc}
It is worth remarking that, whenever $G$ is a discrete group, the rational standard discrete bimodule $\Bi(G)$ coincides with the group algebra $\QG$. In this case, for every finitely generated projective $\QG$-module $P$, the value at $\mathrm{id}_P$ of the map $\underline\ev_{P}\circ\xi^{-1}_{P,P}$ produces the classical Hattori--Stallings rank of $P$, which is not a number rather an element of the abelian group $\QG/[\QG,\QG]$ (cf.~\cite[$\S$~IX.2, Exercise~1]{brown:coh}). The classical $\Q$-valued rank $\rho(P)$ can be then obtained from the Hattori--Stallings rank by applying the homomorphism $\overline{\phantom{x}}\colon\QG/[\QG,\QG]\to\Q$ 
satisfying $\overline 1= 1$ and $\overline \gamma= 0$ for $1\neq\gamma\in\QG$ (cf.~\cite[$\S$~IX.2, Exercise~8]{brown:coh}). In particular, one has
$$\trho(P)=\rho(P)\cdot\mu_{\{1\}},$$
where $\mu_{\{1\}}$ is the counting measure on $G$.
\end{rem}
\section{The Euler characteristic of a unimodular t.d.l.c.~group}
\label{s:euler}

Let $G$ be a unimodular t.d.l.c.~group,
and let $P_1,P_2\in\ob(\QGdis)$ be finitely generated and projective.
Then, by construction, one has that 
$\rho_{P_1\oplus P_2}(\iid_{P_1\oplus P_2})=\rho_{P_1}(\iid_{P_1})+\rho_{P_2}(\iid_{P_2})$, i.e.,
\begin{equation}
\label{eq:euler9}
\trho(P_1\oplus P_2)=\trho(P_1)+\trho(P_2).
\end{equation}
Let $(P_\bullet,\delta_\bullet)$ be a finite projective resolution of $\Q$ in $\QGdis$, i.e.,
there exists $N\geq 0$ such that $P_k=0$ for $k>N$, and $P_k$ is finitely generated for all $k\geq 0$.
From the identity \eqref{eq:euler9} one concludes that the value
\begin{equation}
\label{eq:euler10}
\tchi_G=\textstyle{\sum_{k\geq 0} (-1)^k\, \trho(P_k)\in \boh(G)}
\end{equation}
is independent from the choice of the projective resolution $(P_\bullet,\delta_\bullet)$
(cf.~\cite[Proposition~4.1]{toto:eulhec} for a similar argument). We will call 
$\tchi_G$ the {\it Euler--Poincar\'e characteristic}
of $G$.
\subsection{Compact t.d.l.c.~groups}
\label{sss:coeul}
Let $G$ be a compact t.d.l.c.~group. Then $\tchi_G=\trho(\Q)=1\cdot\mu_{G}$, where $\mu_G$ denotes the probability measure on $G$. The equality $\tchi_G=\trho(\Q)$ is an easy consequence of the fact that the trivial discrete $\QG$-module $\Q$ is projective in $\QGdis$ (cf.~\cite[Proposition~3.7(a)]{cw:qrat}). By Proposition~\ref{prop:HSperm}, the Hattori--Stallings rank $\trho(\Q)$ equals $1\cdot\mu_{G}$ since $\Q$ is isomorphic to the permutation $\QG$-module $\Q[G/G]$. 
\subsection{Fundamental groups of finite graphs of profinite groups}
\label{sss:fundeul}
Let $(\gog,\Lambda)$ be a finite graph of profinite groups (cf.~\cite[$\S$~5.5]{cw:qrat}).
Assume further that the t.d.l.c.~group $\Pi=\pi_1(\gog,\Lambda,x_0)$ is unimodular\footnote{For a given
finite graph of profinite groups it is possible to decide when $\Pi$ is unimodular.
However, the complexity of this problem will depend on the rank of $H^1(|\Lambda|,\Z)$.
E.g., if $\Lambda$ is a finite tree, then $\Pi$ will be unimodular.}.  Then, $\Pi$ acts without inversions on its Bass-Serre tree and, by Theorem~\ref{thm:chi simpl} or \eqref{eq:typeF}, one has
\begin{equation}\label{eq:chi pi}
\tchi_\Pi=\sum_{v\in\euV(\Lambda)} 1\cdot\mu_{{\gog}_v}-\sum_{\eu e\in\ca E^g(\Lambda)} 1\cdot\mu_{\gog_\eu e},
\end{equation}
where  $\euE^g\subseteq\euE$ denotes a set of representatives of the $\Z/2\Z$-action on $\euE$ given by edge inversion. 
In particular, if $(\gog,\Lambda)$ is a finite graph of finite groups one obtains
\begin{equation}
\label{eq:fundeul2}
\tchi_\Pi=\Big(\sum_{v\in\euV(\Lambda)} \frac{1}{|\gog_v|}-\sum_{\eu e\in\euE^g(\Lambda)} \frac{1}{|\gog_\eu e|}\Big)\cdot\mu_{\{1\}}
=\chi_\Pi\cdot\mu_{\{1\}},
\end{equation}
where $\chi_\Pi$ denotes the Euler characteristic of the discrete group $\Pi$
(cf.~\cite[\S II.2.6, Ex.~3]{serre:trees}). 
\begin{prop}
\label{prop:muless}
Let $(\gog,\Lambda)$ be a finite graph of profinite groups, such that
$\Pi=\pi_1(\gog,\Lambda,x_0)$ is unimodular and non-compact. Then
$\tchi_\Pi\leq 0$.
\end{prop}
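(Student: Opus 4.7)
The plan is to unfold the formula \eqref{eq:chi pi} into a purely combinatorial inequality on $\Lambda$, and then handle the residual cases by a Bass--Serre collapse.

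First I would fix any Haar measure $\mu$ on $\Pi$ and set $|H|:=\mu(H)$ for each compact open subgroup $H\leq\Pi$, so that $\mu_H=|H|^{-1}\mu$. The identity \eqref{eq:chi pi} then reads $\tchi_\Pi=c\cdot\mu$ with
\[
c=\sum_{v\in\euV(\Lambda)}|\gog_v|^{-1}-\sum_{\eue\in\euE^g(\Lambda)}|\gog_\eue|^{-1},
\]
so the claim reduces to $c\leq 0$. For each edge $\eue$ with endpoints $u,v$, the openness of $\gog_\eue$ in $\gog_u$ and in $\gog_v$ gives $|\gog_\eue|^{-1}=[\gog_u:\gog_\eue]\cdot|\gog_u|^{-1}=[\gog_v:\gog_\eue]\cdot|\gog_v|^{-1}$. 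Averaging these two presentations and re-summing by vertex yields
\[
\sum_{\eue\in\euE^g(\Lambda)}|\gog_\eue|^{-1}=\tfrac{1}{2}\sum_{v\in\euV(\Lambda)}|\gog_v|^{-1}\,\mathrm{wdeg}(v),\qquad \mathrm{wdeg}(v):=\sum_{\eue\ni v}[\gog_v:\gog_\eue],
\]
with each loop at $v$ counted twice. Hence $c=\sum_v|\gog_v|^{-1}\bigl(1-\tfrac{1}{2}\mathrm{wdeg}(v)\bigr)$, and $c\leq 0$ follows as soon as $\mathrm{wdeg}(v)\geq 2$ for every $v$.

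The obstacle is thus the case $\mathrm{wdeg}(v)<2$, which by inspection forces $v$ to be a \emph{trivial leaf}: a vertex incident to a unique edge $\eue$, which is not a loop, and satisfies $\gog_\eue=\gog_v$. I would then remove such $v$ and $\eue$ to obtain a finite graph of profinite groups $(\gog',\Lambda')$ supported on $\Lambda\setminus\{v,\eue\}$. A standard Bass--Serre folding (the edge $\eue$ is ``trivial'' in the graph of groups) gives $\pi_1(\gog',\Lambda',x_0)\cong\Pi$, so $\Pi$ remains unimodular and non-compact, and the sum in \eqref{eq:chi pi} is unchanged because the removed terms contribute $\mu_{\gog_v}-\mu_{\gog_\eue}=0$.

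Iterating the collapse, and using that $\Lambda$ is finite and each step strictly decreases the vertex set, the procedure terminates at a reduced graph $(\gog^*,\Lambda^*)$ with $\pi_1(\gog^*,\Lambda^*,x_0)\cong\Pi$. Either $\Lambda^*$ is a single vertex with no edges -- which forces $\Pi\cong\gog^*_{v^*}$ to be profinite, contradicting non-compactness -- or $\Lambda^*$ contains at least one edge and admits no trivial leaves. In the latter case every vertex of $\Lambda^*$ satisfies $\mathrm{wdeg}\geq 2$ (a loop already contributes $2$; a non-trivial leaf contributes an index $\geq 2$; and any vertex of combinatorial degree $\geq 2$ is automatically fine), whence $\tchi_\Pi=c\cdot\mu\leq 0$. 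The only non-combinatorial ingredient, and the main point to verify carefully, is the invariance of $\pi_1$ under the collapse of a trivial edge, which is a textbook consequence of Bass--Serre theory.
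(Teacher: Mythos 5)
Your argument is correct, and it is organized differently from the paper's. The paper first collapses \emph{every} edge $\eue$ for which some $\alpha_{\eue}\colon\gog_{\eue}\to\gog_{t(\eue)}$ is surjective, so that afterwards all edge indices are $\geq 2$; this gives the edgewise inequality $\mu_{\gog_{\eue}}\geq\mu_{\gog_{t(\eue)}}+\mu_{\gog_{o(\eue)}}$, and the conclusion is reached by comparing $\sum_v\mu_{\gog_v}$ with the edge terms along a maximal subtree of $\Lambda$ (non-compactness enters exactly as in your argument, to rule out the reduced graph being a single vertex). You instead split each edge contribution equally between its two ends (a weighted handshake identity), which turns the sign question into the vertexwise condition $\mathrm{wdeg}(v)\geq 2$; this only requires collapsing \emph{trivial leaves}, a strictly weaker reduction than the paper's, and it dispenses with the choice of a spanning tree. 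What the paper's route buys is the clean local inequality per edge; what yours buys is lighter Bass--Serre surgery and a formula $c=\sum_v|\gog_v|^{-1}\bigl(1-\tfrac12\mathrm{wdeg}(v)\bigr)$ that makes the sign transparent. Two small points you should make explicit: (i) the equality $[\gog_u:\gog_{\eue}]\,|\gog_u|^{-1}=[\gog_v:\gog_{\eue}]\,|\gog_v|^{-1}$ is precisely where unimodularity is used (the two embedded copies of $\gog_{\eue}$ are conjugate in $\Pi$, hence have equal Haar measure), the same role it plays in the paper's proof; and (ii) $\mathrm{wdeg}(v)<2$ also allows an isolated vertex, which you must exclude by the (implicit) connectedness of $\Lambda$ rather than folding it silently into the ``trivial leaf'' case --- connectedness also guarantees that your terminal graph with at least one edge has no isolated vertices.
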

\begin{proof}
Suppose that one of the open embeddings
$\alpha_\eu e \colon\gog_\eu e\to\gog_{t(\eu e)}$ is surjective.
Then removing the edge $\eu e$ from $\Lambda$,
idetifying $t(\eu e)$ with $o(\eu e)$ and taking the induced
graph of profinite groups $(\gog^\prime,\Lambda^\prime)$,
does not change the fundamental group, i.e.,
one has a topological isomorphism $\pi_1(\gog,\Lambda,x_0)\simeq
\pi_1(\gog^\prime,\Lambda^\prime,x_0^\prime)$.
Thus we may assume that the finite graph of profinite groups has the property
that none of the injections $\alpha_\eu e\colon\gog_\eu e\to\gog_{t(\eu e)}$
is surjective. Since $\Pi$ is not compact, $\Lambda$ cannot be a single vertex.
Thus, as the group is unimodular, $|\gog_{t(\eu e)}:\alpha_\eu e(\gog_{\eu e})|\geq 2$ and  one has 
$\mu_{\gog_{\eu e}}\geq\mu_{\gog_{t(\eu e )}}+\mu_{\gog_{o(\eu e)}}$ for any edge $\eu e \in\ca E(\Lambda)$.
Choosing a maximal subtree of $\Lambda$ then yields the claim.
\end{proof}
\subsection{Uniform lattices}\label{ss:unif lat}
Let $G$ be a t.d.l.c.~group which has a uniform lattice $\Gamma$. In particular, $G$ is unimodular. Denote by $\mu_{\ca O}$ the left-invariant Haar measure of $G$ satisfying $\mu_{\ca O}(\ca O)=1$ for the compact open subgroup $\ca O$ of $G$. By \cite[\S 1.5(10)]{BL01}, given a $G$-set $X$ with compact open stabilisers $G_x$ ($x\in X$), one has
\begin{equation}
\sum_{x\in \Gamma\backslash X}\frac{1}{|\Gamma_{x}|}=\mu^{\ca O}_{G/\Gamma}(G/\Gamma)\cdot \sum_{x\in G\backslash X}\frac{1}{\mu_{\ca O}(G_x)},
\end{equation}
where $\mu^{\ca O}_{G/\Gamma}(G/\Gamma)$ denotes the covolume of $\Gamma$ with respect to  $\mu_{\ca O}$. 
\begin{prop}\label{prop:lattice} 
Let $G$ be a unimodular t.d.l.c.~group of type~$\textup F$. For every uniform lattice $\Gamma$ of $G$ one has
$$\tchi_G=\frac{\chi_\Gamma}{\mu^{\ca O}_{G/\Gamma}(G/\Gamma)}\cdot \mu_{\ca O},$$
where $\chi_\Gamma$ denotes the Euler characteristic of the discrete group $\Gamma$.
\end{prop}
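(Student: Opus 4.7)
The plan is to apply Theorem~B to a cocompact contractible model for $G$, rewrite the classical Euler characteristic of $\Gamma$ using the same model, and match the two via the covolume identity recalled at the start of \S\ref{ss:unif lat}.

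First I would invoke the hypothesis that $G$ is of type~F to fix a cocompact contractible abstract simplicial complex $\Sigma$ on which $G$ acts with compact open stabilisers. Theorem~B applies verbatim and gives
$$\tchi_G=\sum_{0\leq k\leq d}(-1)^k\sum_{\omega\in\Omega_k} 1\cdot\mu_{G_{\pm\omega}}.$$
For each compact open subgroup $U\leq G$, the two Haar measures $\mu_U$ and $\mu_{\caO}$ are proportional with $\mu_{\caO}(U)\cdot\mu_U=\mu_{\caO}$, so $\mu_{G_{\pm\omega}}=\mu_{\caO}(G_{\pm\omega})^{-1}\cdot\mu_{\caO}$, and the displayed formula becomes a rational multiple of $\mu_{\caO}$ whose coefficient involves only the inverses $\mu_{\caO}(G_{\pm\omega})^{-1}$.

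Next I would restrict the action to $\Gamma$. As $\Gamma$ is discrete in $G$ and the cell stabilisers in $G$ are compact, each $\Gamma_{\pm y}=\Gamma\cap G_{\pm y}$ is finite; uniformity of $\Gamma$ ensures that $\Gamma\backslash\Sigma$ is compact, so $|\Gamma\backslash\widetilde{\Sigma}_k|<\infty$ for all $k$. Since $\Sigma$ is contractible and all $\Gamma$-cell-stabilisers are finite, the standard Euler-characteristic formula for virtually torsion-free groups acting properly and cocompactly on a contractible CW-complex gives
$$\chi_\Gamma=\sum_{0\leq k\leq d}(-1)^k\sum_{y\in\Gamma\backslash\widetilde{\Sigma}_k}\frac{1}{|\Gamma_{\pm y}|}.$$

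Finally, I would apply the covolume identity of \cite[\S 1.5(10)]{BL01}, recalled at the beginning of \S\ref{ss:unif lat}, with $X=\widetilde{\Sigma}_k$, to obtain for every $k$
$$\sum_{y\in\Gamma\backslash\widetilde{\Sigma}_k}\frac{1}{|\Gamma_{\pm y}|}=\mu^{\caO}_{G/\Gamma}(G/\Gamma)\cdot\sum_{\omega\in\Omega_k}\frac{1}{\mu_{\caO}(G_{\pm\omega})}.$$
Combining the three displays and taking the alternating sum over $k$ produces
$\tchi_G=\chi_\Gamma\cdot\mu^{\caO}_{G/\Gamma}(G/\Gamma)^{-1}\cdot\mu_{\caO}$, as claimed.

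The technical heart of the proof is really bookkeeping rather than a deep step: one must keep the treatment of \emph{inversions} consistent across all three displayed formulas, systematically working with the signed sets $\widetilde{\Sigma}_k$ and with the stabilisers $G_{\pm\omega}$ and $\Gamma_{\pm y}$ of signed simplices. The one genuine check is that \cite[\S 1.5(10)]{BL01} is indeed available for the signed $G$-set $\widetilde{\Sigma}_k$ (equivalently, that one can perform orbit counting on pairs $\{\omega,-\omega\}$); once this is verified, the proof reduces to the one-line combination above.
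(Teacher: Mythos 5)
Your overall strategy is the same as the paper's: the paper's proof is exactly ``combine the cocompact-model formula for $\tchi_G$ with the covolume identity of \cite[\S 1.5(10)]{BL01}'', only it uses the CW-model formula \eqref{eq:typeF} rather than the simplicial Theorem~\ref{thm:chi simpl}. Routing through Theorem~\ref{thm:chi simpl} creates two concrete problems in your write-up. First, type~$\textup{F}$ in this paper is formulated via topological models, i.e.\ contractible $G$-CW-complexes with compact open stabilisers and cocompact skeleta, subject to the rigidity condition that any $g$ with $ge\cap e\neq\emptyset$ acts as the identity on the cell $e$; it does \emph{not} hand you an abstract simplicial complex with a simplicial $G$-action, so your very first step needs a justification (equivariant triangulation/subdivision) that you do not supply. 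If you instead use \eqref{eq:typeF} directly, the rigidity condition rules out inversions altogether, each $C^{cw}_k(X)$ is $\coprod_{U\in\caF_k}\Q[G/U]$, and the whole signed-set bookkeeping disappears -- which is precisely why the paper's proof is one line.

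Second, the inversion bookkeeping, which you yourself call the technical heart, is inconsistent as displayed. Your formula for $\chi_\Gamma$ and your application of \cite[\S 1.5(10)]{BL01} both sum over $\Gamma\backslash\widetilde{\Sigma}_k$ while using the \emph{setwise} stabilisers $\Gamma_{\pm y}$: for a $\Gamma$-orbit that is not inverted this counts the two orientations separately and yields $2/|\Gamma_y|$ instead of $1/|\Gamma_y|$. The sums must run over the unoriented sets, $\Gamma\backslash\Sigma_k$ and $G\backslash\Sigma_k$ (the latter is what $\Omega_k$ indexes), and then \cite[\S 1.5(10)]{BL01} applies verbatim to the discrete $G$-set $\Sigma_k$, no ``signed'' extension needed. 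Moreover, with inversions present the identity $\chi_\Gamma=\sum_k(-1)^k\sum_{y\in\Gamma\backslash\Sigma_k}1/|\Gamma_{\pm y}|$ is \emph{not} the off-the-shelf formula for admissible actions: an inverted orbit contributes the induced sign module, and one needs the discrete analogue of Lemma~\ref{lem:signed}, namely that $\idn_{\Gamma_{\pm\sigma}}^{\Gamma}(\Q_\sigma)$ is $\Q[\Gamma]$-projective with rank $1/|\Gamma_\sigma|-1/|\Gamma_{\pm\sigma}|=1/|\Gamma_{\pm\sigma}|$ in the inverted case (or one must pass to a subdivision). This is true and the argument is parallel to Lemma~\ref{lem:signed}, but it has to be said; likewise, since a uniform lattice may have torsion, $\chi_\Gamma$ here is the rational Euler characteristic of a group of type~$\FP$ over $\Q$ (cf.\ Remark~\ref{rem:HS disc}), not the ``virtually torsion-free'' version you invoke. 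With these repairs your argument closes, and it then coincides with the paper's proof up to the choice of model.
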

\begin{proof} The claim follows from  \eqref{eq:typeF} and the formula above.
\end{proof}
\begin{rem} We could also prove a similar result for arbitrary t.d.l.c.~groups of type~$\FP$ admitting uniform lattices at the cost
of notational complexity.
\end{rem}
\begin{rem}\label{rem:unif lat}
Following \cite[\S~18.4]{dav:book}, a measure $\mu$ on $G$ is called an \emph{Euler--Poincar\'e measure} if every discrete, cocompact, torsion-free subgroup  $\Gamma\subset G$ has the
following two properties: $\Gamma$ is of type~$\textup{FL}$ and $\chi(\Gamma)= \mu(G/\Gamma)$. In Subsection~\ref{ss:top mod}, Proposition~\ref{prop:lattice} can be used to deduce that $\tilde\chi_G$ is an Euler--Poincar\'e measure for every unimodular t.d.l.c.~group admitting a finite-dimensional topological model, generalising \cite[Theorem~18.4.2]{dav:book}.
\end{rem}

\subsection{t.d.l.c.~groups and $G$-simplicial complexes}\label{ss:simplicial} Let $G$ be a uni\-modular t.d.l.c.~group and $\Sigma$  a contractible\footnote{I.e., the geometric realization of $\Sigma$ is contractible (but we do not claim that $|\Sigma|$ is $G$-homotopic to a point).} $d$-dimensional abstract
simplicial complex acted on by $G$ with compact open stabilisers. Following \cite[(A.8)]{cw:qrat}, for all $0\leq k\leq d$, one defines
$$\widetilde{\Sigma}_k:=\{x_0\wedge\ldots\wedge x_k\mid \{x_0,\ldots,x_k\}\in\Sigma_k\}\subset\Lambda_{k+1}(\Q[\Sigma_0]),$$
where $\Lambda_{k+1}(\Q[\Sigma_0])$ denotes the exterior algebra of the $\Q$-vector space over the set $\Sigma_0$. For every $k$-simplex $\sigma=\{x_0,\ldots,x_k\}$ one has $\underline{\sigma}:=x_0\wedge\ldots\wedge x_k$, and the $G$-orbit $G\cdot\sigma$ can be regarded as a subset of the signed $G$-set $\widetilde{\Sigma}_k$ (cf.~\cite[\S 3.3]{cw:qrat}). The $G$-orbit $G\cdot\sigma$ is called \emph{inverted} if $g\cdot\underline{\sigma}=-\underline{\sigma}$ for some $g\in G$.  Set 
$G_{\pm\sigma}:=\stab_G(\{\underline\sigma,-\underline\sigma\})$  and $G_\sigma:=\stab_G(\{\underline\sigma\})$. Note that both $G_{\pm\sigma}$ and $G_\sigma$ are compact open subgroups of $G$. The orientation character $\textup{sgn}_\sigma\colon G_{\pm\sigma}\to\{\pm1\}$ induces an action on the abelian group $\Q$ (which is trivial if the orbit $G\cdot\sigma$ is not inverted). Denote by  $\Q_\sigma$  the discrete left $\Q[G_{\pm\sigma}]$-module given by the abelian group $\Q$ endowed with the prescribed action.  
\begin{lem}\label{lem:signed}  In the notation above, for every simplex $\sigma$, one has
$$\trho(\idn_{G_{\pm\sigma}}^G(\Q_\sigma))=1\cdot\mu_{G_{\pm\sigma}}=\trho(\idn_{G_{\pm\sigma}}^G(\Q)).$$
\end{lem}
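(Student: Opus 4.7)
My plan is to reduce everything to Proposition~\ref{prop:HSperm} via induction in stages and the additivity of the Hattori--Stallings rank established in \eqref{eq:euler9}.

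First, I would dispose of the trivial case: if the orbit $G\cdot\sigma$ is not inverted, then $G_{\pm\sigma}=G_\sigma$, the character $\textup{sgn}_\sigma$ is trivial, and $\Q_\sigma=\Q$; thus $\idn_{G_{\pm\sigma}}^G(\Q_\sigma)=\idn_{G_{\pm\sigma}}^G(\Q)\cong\Q[G/G_{\pm\sigma}]$ as discrete left $\QG$-modules, and the claim reduces directly to Proposition~\ref{prop:HSperm}.

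Assume now that $G\cdot\sigma$ is inverted, so that $[G_{\pm\sigma}:G_\sigma]=2$. Since $G_\sigma$ is normal of index $2$ in $G_{\pm\sigma}$, as a $\Q[G_{\pm\sigma}]$-module the regular representation decomposes as
\begin{equation*}
\idn_{G_\sigma}^{G_{\pm\sigma}}(\Q)\;\cong\;\Q[G_{\pm\sigma}/G_\sigma]\;\cong\;\Q\oplus\Q_\sigma,
\end{equation*}
where the two summands are the $(\pm 1)$-eigenspaces for the action of $G_{\pm\sigma}/G_\sigma$. Applying induction in stages from $G_{\pm\sigma}$ to $G$ (which holds in $\QGdis$ since induction is a left adjoint and hence commutes with direct sums), I obtain
\begin{equation*}
\Q[G/G_\sigma]\;\cong\;\idn_{G_\sigma}^G(\Q)\;\cong\;\idn_{G_{\pm\sigma}}^G(\Q)\,\oplus\,\idn_{G_{\pm\sigma}}^G(\Q_\sigma).
\end{equation*}
Both summands are finitely generated projective discrete left $\QG$-modules (by Proposition~\ref{prop:proj}, since both $\Q$ and $\Q_\sigma$ are direct summands of $\Q[G_{\pm\sigma}/G_\sigma]$ over $G_{\pm\sigma}$, whose induction to $G$ is a permutation module with compact stabilisers).

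Now I apply additivity of the Hattori--Stallings rank \eqref{eq:euler9} together with Proposition~\ref{prop:HSperm}, obtaining
\begin{equation*}
1\cdot\mu_{G_\sigma}\;=\;\trho(\Q[G/G_\sigma])\;=\;\trho(\idn_{G_{\pm\sigma}}^G(\Q))+\trho(\idn_{G_{\pm\sigma}}^G(\Q_\sigma))\;=\;1\cdot\mu_{G_{\pm\sigma}}+\trho(\idn_{G_{\pm\sigma}}^G(\Q_\sigma)).
\end{equation*}
Finally, since $G_\sigma\subseteq G_{\pm\sigma}$ has index $2$, the Haar measures satisfy $\mu_{G_\sigma}=2\cdot\mu_{G_{\pm\sigma}}$ (both are left-invariant Haar measures on $G$, and comparing their values on $G_{\pm\sigma}$ gives the factor $[G_{\pm\sigma}:G_\sigma]=2$). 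Substituting and solving gives $\trho(\idn_{G_{\pm\sigma}}^G(\Q_\sigma))=1\cdot\mu_{G_{\pm\sigma}}$, as required.

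The only step requiring genuine care is the decomposition $\idn_{G_\sigma}^{G_{\pm\sigma}}(\Q)\cong\Q\oplus\Q_\sigma$ and its compatibility with induction to $G$; everything else is routine bookkeeping of Haar measures and an application of the two main results already proved (Proposition~\ref{prop:HSperm} and the additivity \eqref{eq:euler9}). The sign-representation identity is elementary $\Q$-representation theory of $\Z/2\Z$, so I do not anticipate a real obstacle.
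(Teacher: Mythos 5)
Your proof is correct and takes essentially the same route as the paper: both decompose $\Q[G/G_\sigma]$ as $\idn_{G_{\pm\sigma}}^G(\Q)\oplus\idn_{G_{\pm\sigma}}^G(\Q_\sigma)$ in the inverted case and conclude via Proposition~\ref{prop:HSperm}, additivity of $\trho$, and the relation $\mu_{G_\sigma}=2\cdot\mu_{G_{\pm\sigma}}$. The only cosmetic difference is that you split $\Q[G_{\pm\sigma}/G_\sigma]\cong\Q\oplus\Q_\sigma$ by semisimplicity of $\Q[C_2]$ before inducing, whereas the paper induces the augmentation sequence of $C_2$ and obtains the splitting afterwards from projectivity of $\Q[G/G_{\pm\sigma}]$ in $\QGdis$.
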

\begin{proof} By Proposition \ref{prop:HSperm}, it suffices to prove that
$$\trho(\idn_{G_{\pm\sigma}}^G(\Q_\sigma))=\begin{cases} 1\cdot\mu_{G_\sigma}-1\cdot\mu_{G_{\pm\sigma}}=1\cdot \mu_{G_{\pm\sigma}},\quad\text{if  $G\cdot\sigma$ is inverted}\\
1\cdot \mu_{G_{\pm\sigma}},\hfill\text{otherwise}.
\end{cases}$$
 If $G\cdot\sigma$ is not inverted, then $\idn_{G_{\pm\sigma}}^G(\Q_\sigma)=\idn_{G_{\pm\sigma}}^G(\Q)\cong\Q[G/G_{\pm\sigma}]$. Hence, Proposition~\ref{prop:HSperm} yields $\trho(\idn_{G_{\pm\sigma}}^G(\Q_\sigma))=1\cdot\mu_{G_{\pm\sigma}}$. Notice that in this case $G_{\pm\sigma}=G_\sigma$ and $1\cdot\mu_{G_\sigma}=1\cdot \mu_{G_{\pm\sigma}}$.

Assume $G\cdot\sigma$ is inverted. Let $C_2$ be the finite group with two elements $\{1,g\}$, and consider the short exact sequence of $\Q[C_2]$-modules
\begin{equation}\label{eq:ses1}
\xymatrix{0\ar[r]&\eu I(C_2)\ar[r]&\Q[C_2]\ar[r]&\Q\ar[r]&0,}
\end{equation}
where $\Q[C_2]\to \Q$ is given by $g\mapsto 1$, and $\eu I(C_2)$ denotes the augmentation ideal. In particular, $\eu I(C_2)$ is $\Q$-generated by the single element $g-1$.  The extension of scalars from $\Q[C_2]$ to $\QG$ given by the orientation character turns \eqref{eq:ses1} into 
\begin{equation*}
\xymatrix{0\ar[r]&\Q_\sigma\ar[r]&\Q[G_{\pm\sigma}/G_\sigma]\ar[r]&\Q\ar[r]&0},
\end{equation*}
which is an exact sequence of discrete left $\Q[G_{\pm\sigma}]$-modules.
 Applying the exact functor $\idn^G_{G_{\pm\sigma}}(\argu)$ yields the short exact sequence 
\begin{equation}\label{eq:ses}
\xymatrix{0\ar[r]&\idn_{G_{\pm\sigma}}^G(\Q_\sigma)\ar[r]&\Q[G/G_\sigma]\ar[r]&\Q[G/G_{\pm\sigma}]\ar[r]&0}
\end{equation}
of discrete left $\Q[G_{\pm\sigma}]$-modules, which splits in  $\QGdis$ since  $\Q[G/G_{\pm\sigma}]$ is projective in $\QGdis$  being  $G_{\pm\sigma}$ compact and open in $G$ (cf.~\cite[Proposition~3.2]{cw:qrat}). By the additivity of $\trho(\argu)$, the rank of $\idn_{G_{\pm\sigma}}^G(\Q_\sigma)$ can be obtained by \eqref{eq:ses}, and so $\trho(\idn_{G_{\pm\sigma}}^G(\Q_\sigma))= 1\cdot\mu_{G_\sigma}-1\cdot\mu_{G_{\pm\sigma}}$ by Proposition~\ref{prop:HSperm}. Finally, since $G_\sigma$  has index 2 in $G_{\pm\sigma}$, $\trho(\idn_{G_{\pm\sigma}}^G(\Q_\sigma))=1\cdot\mu_{G_{\pm\sigma}}$.
\end{proof}
\begin{thm}\label{thm:chi simpl}
Let $G$ be a unimodular t.d.l.c.~group acting on a $d$-dimen\-sional abstract
simplicial complex $\Sigma$ with  compact open  stabilisers.
Let $\Omega_\bullet\subseteq\Sigma_\bullet$ be a set of representatives of the $G$-orbits on the $\bullet$-skeleton $\Sigma_\bullet$, and suppose $|\Omega_\bullet|<\infty$. If $\Sigma$ is contractible
then
\begin{equation}
\label{eq:exchi}
\tchi_G=\sum_{0\leq k\leq d}\quad (-1)^k\Big(\sum_{\omega\in\Omega_k}1\cdot\mu_{G_{\pm\omega}}\Big),
\end{equation}
where $d=\dim(\Sigma)$ and $G_{\pm\omega}:=\mathrm{stab}_G(\{\underline\omega,-\underline\omega\})$.
\end{thm}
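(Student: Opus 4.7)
The plan is to realize the reduced augmented simplicial chain complex of $\Sigma$ (with signed orbits) as a finite projective resolution of the trivial discrete $\QG$-module $\Q$ in $\QGdis$, and then apply the additivity of $\trho$ together with Lemma~\ref{lem:signed}.

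First, I would work with the signed chain complex $\widetilde C_\bullet(\Sigma,\Q)$ whose $k$-th term is the $\Q$-span of $\widetilde{\Sigma}_k$ inside $\Lambda_{k+1}(\Q[\Sigma_0])$, as described in \cite[(A.8)]{cw:qrat}. Since $G$ acts on $\Sigma$ preserving the simplicial structure, $G$ acts on $\widetilde{\Sigma}_k$ as a signed $G$-set, and the usual boundary maps are $\QG$-equivariant. Contractibility of $\Sigma$ ensures that the augmented complex
\begin{equation*}
\xymatrix{0\ar[r]&\widetilde C_d\ar[r]^-{\der_d}&\widetilde C_{d-1}\ar[r]&\cdots\ar[r]&\widetilde C_0\ar[r]^-{\eps}&\Q\ar[r]&0}
\end{equation*}
is exact in $\QGdis$.

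Next I would identify each chain module. Choosing the $G$-orbit decomposition determined by $\Omega_k$ and distinguishing whether an orbit is inverted or not, the signed $G$-set $\widetilde{\Sigma}_k$ decomposes as a disjoint union of $G$-orbits each isomorphic to the signed $G$-set $G\cdot\underline{\omega}$. This gives an isomorphism of discrete left $\QG$-modules
\begin{equation*}
\widetilde C_k \;\cong\; \bigoplus_{\omega\in\Omega_k}\idn_{G_{\pm\omega}}^G(\Q_\omega),
\end{equation*}
where $\Q_\omega$ is the one-dimensional $\Q[G_{\pm\omega}]$-module attached to $\omega$ in Subsection~\ref{ss:simplicial}. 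Since each $G_{\pm\omega}$ is compact and open in $G$, each induced module $\idn_{G_{\pm\omega}}^G(\Q_\omega)$ is a direct summand of $\Q[G/G_{\pm\omega}]$ (by the argument in the proof of Lemma~\ref{lem:signed}, via the short exact sequence~\eqref{eq:ses}), hence projective in $\QGdis$ by Proposition~\ref{prop:proj}, and finitely generated because $|\Omega_k|<\infty$. Therefore $\widetilde C_\bullet\to\Q\to 0$ is an admissible finite projective resolution of $\Q$.

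Finally, I would apply the definition \eqref{eq:euler10} of the Euler--Poincar\'e characteristic, which is independent of the chosen resolution, together with the additivity identity \eqref{eq:euler9} of $\trho$, obtaining
\begin{equation*}
\tchi_G=\sum_{0\leq k\leq d}(-1)^k\,\trho(\widetilde C_k)=\sum_{0\leq k\leq d}(-1)^k\sum_{\omega\in\Omega_k}\trho\bigl(\idn_{G_{\pm\omega}}^G(\Q_\omega)\bigr),
\end{equation*}
and Lemma~\ref{lem:signed} replaces each $\trho(\idn_{G_{\pm\omega}}^G(\Q_\omega))$ by $1\cdot\mu_{G_{\pm\omega}}$, giving the claim. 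The main subtlety I anticipate is the careful bookkeeping with inverted versus non-inverted orbits at the level of the chain complex; this is precisely what Lemma~\ref{lem:signed} was set up to absorb, so once the identification of $\widetilde C_k$ as the signed permutation module above is in place, the rest is formal.
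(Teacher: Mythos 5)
Your proposal is correct and follows essentially the same route as the paper: identify the oriented (signed) chain modules $\mathrm{span}_\Q(\widetilde{\Sigma}_k)$ orbitwise with the induced modules $\idn_{G_{\pm\omega}}^G(\Q_\omega)$, observe that contractibility makes the augmented complex a finite projective resolution of $\Q$ in $\QGdis$, and conclude via additivity of $\trho$ and Lemma~\ref{lem:signed}. The only slip is cosmetic: in the inverted case $\idn_{G_{\pm\omega}}^G(\Q_\omega)$ is a direct summand of $\Q[G/G_{\omega}]$ (stabiliser of the oriented simplex), not of $\Q[G/G_{\pm\omega}]$, exactly as in the short exact sequence~\eqref{eq:ses}, and this does not affect the argument.
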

\begin{proof} Let $\omega$ be a representative of the $G$-orbits on $\Sigma_k$. The $\Q[G]$-submodule $\QG\cdot\underline\omega$ of the oriented $\QG$-permutation module $C_k(\Sigma):=\mathrm{span}(\tilde\Sigma_k)\subseteq\Lambda_{k+1}(\Q[\Sigma_0])$ is isomorphic to the induced module $\idn_{G_{\pm\sigma}}^G(\Q_\sigma)$ (cf.~\cite[(A.9) and (6.9)]{cw:qrat}). Therefore, the augmented rational chain complex $(C_\bullet(\Sigma),\partial_\bullet,\varepsilon)$ of $\Sigma$ produces a projective resolution of $\Q$ in $\QGdis$, which can be used to compute $\tchi_G$, and so $\eqref{eq:exchi}$. Indeed, the Hattori--Stallings rank of each projective module in $(C_\bullet(\Sigma),\partial_\bullet,\varepsilon)$ is given by Lemma~\ref{lem:signed}.
\end{proof}
\subsection{t.d.l.c.~groups with a finite-dimensional topological model} \label{ss:top mod}
Let $X$ be a CW-complex
on which $G$ acts cellularly and continuously. We require that, for each open cell $e \subset X$ and
each $g \in G$ with $ge \cap e \neq\emptyset$, the multiplication by $g$ is the identity on $e$. Following \cite{CCC20,st:topmod}, $X$ is said to be a
{\em topological model of $G$} if $X$ is contractible, its $G$-stabilisers are open and compact, and the
$G$-action on the $k$-skeleton $X^{(k)}$ is cocompact for every $k \in\mathbb N$. 
Assume that $G$ admits a topological model $X$ of dimension $d$, and denote by $\caF_k$ a set of representatives of conjugacy classes of stabilizers of $k$-cells. As explained in \cite[\S~II.1]{tdieck:trans}, the $k$-skeleton $X^{(k)}$
is built from $X^{(k-1)}$   by (simultaneously) attaching the family of (equivariant) $k$-cells $(G/U\times D^k\mid U\in \ca F_k)$ of type $(G/U\mid U\in\caF_k)$. Since each coset space $G/U$ is discrete,
the augmented cellular chain complex $C_\bullet^{cw}(X)$ with $\Q$-coefficients yields a  projective resolution of $\Q$ in $\QGdis$ (after fixing
an equivariant choice of orientations for the cells). Indeed, each module $C_k^{cw}(X)$ is isomorphic to $\coprod_{U\in\caF_k}\Q[G/U]$.
Assume now that $G$ unimodular. One can compute the Hattori--Stallings rank of $C_k^{cw}(X)$, which is given by  $\sum_{U\in\caF_k} 1\cdot\mu_{U}$ (cf.~Proposition~\ref{prop:HSperm}). Therefore, for every compact open subgroup $\caO$ of $G$, one has
\begin{equation}\label{eq:typeF}
\tchi_G=\sum_{0\leq k\leq d}\, (-1)^k\,\Big(\sum_{U\in\caF_k} \frac{1}{\mu_{\ca O}(U)}\Big)\cdot\mu_{\ca O}.
\end{equation}
\begin{rem}
    The Euler--Poincar\'e characteristic $\tchi_G$ of a unimodular t.d.l.c.~group that admits a topological model of dimension $d$ is an Euler--Poincar\'e measure in the sense of \cite[\S~3.3]{ser:coh} (cf. Remark~\ref{rem:unif lat}).
\end{rem}
\begin{rem}\label{rem:betti} Here we still use the notation introduced above.
In \cite{sauer:betti}, for $k\in\{1,\ldots,d\}$, the author refers to $$\sum_{U\in\caF_k} \frac{1}{\mu_{\ca O}(U)}$$
as the weighted number $c_k(X;G,\mu_\caO)$ of equivariant $k$-cells of $X$ (with respect to $\mu_\caO$) and defines the {\em equivariant Euler characteristic of $X$ (w.r.t. $\mu_\caO$)} as
$$\chi(X;G,\mu_\caO):=\sum_{k=0}^d(-1)^kc_k(X;G,\mu_\caO),$$ which coincides with the rational coefficient in \eqref{eq:typeF}.
For unimodular separable t.d.l.c.~groups admitting a finite-dimensional geometric model, the value $\chi(X;G,\mu_\caO)$ coincides with the alternating sum of the
$L^2$-Betti numbers of $G$ with Haar measure $\mu_\caO$ (cf.~\cite[Theorem~4.6]{sauer:betti}).
\end{rem}
\subsection{Chamber-transitive actions on locally finite regular buildings}\label{ss:build}
First we recall some well-known facts about buildings following~\cite[\S~18.1]{dav:book}, then we  compute the Euler--Poincar\'e characteristic of closed automorphism groups of locally finite regular buildings with transitive actions on the chambers.
\subsubsection{Buildings}  A {\em chamber system over a set $S$} is a set $\Delta$ together with a family of equivalence relations on $\Delta$ indexed by $S$. The elements of $\Delta$ are the {\em chambers} of the system. Two chambers are {\em $s$-equivalent} if they are equivalent with respect to equivalence relation indexed by $s$; they are $s$-adjacent if they are $s$-equivalent but not equal. A {\em gallery} in $\Delta$ is a finite sequence of chambers $(C_0,...,C_k)$ such that,for $1\leq j\leq k$, $C_{j-1}$ is adjacent to $C_j$. The {\em type} of the gallery is the word $s_1\cdots s_k$ such that $C_{j-1}$ is $s_j$-adjacent to $C_j$. 
Given a Coxeter group $(W,S)$, a building  of type $(W , S)$ is a chamber system $\Delta$ over $S$ such that
\begin{itemize}
    \item[(b1)]
 for all $s \in S$, each $s$-equivalence class contains at least two chambers, 
    
 \item[(b2)] there is a function $\delta\colon\Delta\times\Delta\to W$ such that $\delta(C, C') = s_1\cdots s_k\in W$  if, and only if, the chambers $C$ and $C'$ can be joined by a gallery of type $s_1\cdots s_k$ (where $s_1\cdots s_k$ is a reduced word in the alphabet $S$).
 \end{itemize}
A building $\Delta$ of type $(W, S)$ is {\em locally finite} (or, equivalently, it has finite thickness) if, for all $s\in S$, every chamber $C$ has finitely many $s$-adjacent chambers.
The building $\Delta$ is {\em regular} if, for each $s \in S$, each $s$-equivalence class has the same number of elements (which will be denoted by $q_s + 1$ when finite). Given a regular locally finite building $\Delta$, the integers $q_s$ define the {\em thickness vector} ${\bf q}$ of $\Delta$, whose indices range over the conjugacy classes of elements of $S$. 
\subsubsection{Spherical residues} For a Coxeter group $(W,S)$ let $\euS(W,S)$ be the set of all spherical subsets $T$
of $S$, i.e., $T$ generates a finite subgroup $W_T$ of $W$. Let $\Delta$ be a building of type $(W,S)$ and $C$ a chamber. For any subset $T$ of $S$, the {\em $T$-residue of $\Delta$ containing the chamber $C$} is the set $\mathrm{Res}_T(C)$ of all chambers in $\Delta$ that can be connected to $C$ by a gallery of type in $W_T$.
A $T$-residue $\Phi$ is said to be {\em spherical} if $T\in \euS(W,S)$. For example, any single chamber produces a spherical residue with $T=\emptyset$. If $\Delta$ is locally finite, then it follows from (b2) that every spherical residue is finite. In particular, every spherical residue determines a spherical building (cf.~\cite[Corollary~5.30]{ab:build}). Hence one deduces the following:
\begin{prop}[\protect{\cite[Corollary~18.1.18]{dav:book}}]\label{prop:index}
    Let $\Delta$ be a regular building of type $(W,S)$ with thickness vector ${\bf q}$. For a spherical subset $T$ of $S$ and a $T$-residue $\Phi\subset\Delta$ one has that the cardinality of $\Phi$ is given by
    $$\mathrm{Card}(\Phi)=\gamma_{_{W_T,T}}({\bf q}),$$
    where  $\gamma_{_{W_T,T}}({\bf t})$ is the growth series of the Coxeter group $W_T=\langle T\rangle$ with generating system $T$ (cf.~\cite[\S~17.1 and Equation~(17.3)]{dav:book}).
\end{prop}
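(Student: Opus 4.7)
The plan is to enumerate the chambers of $\Phi$ by stratifying them according to their Weyl distance from a fixed base chamber, and to show that each stratum has cardinality equal to the corresponding monomial in the growth series.

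First, fix any chamber $C\in\Phi$. By axiom (b2), a chamber $C'\in\Delta$ lies in $\Phi$ if and only if $\delta(C,C')\in W_T$. Thus $\Phi$ partitions as
\begin{equation*}
\Phi=\bigsqcup_{w\in W_T}\Phi_w,\qquad \Phi_w=\{\,C'\in\Delta\mid \delta(C,C')=w\,\}.
\end{equation*}
Since $T$ is spherical, $W_T$ is finite, so this partition has finitely many pieces, and the cardinality formula reduces to computing $|\Phi_w|$ for each $w\in W_T$.

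Next, I claim that for every $w\in W_T$ with reduced expression $w=s_1\cdots s_k$ one has $|\Phi_w|=q_{s_1}\cdots q_{s_k}$. I would prove this by induction on $k=\ell(w)$. The base $k=0$ gives $\Phi_1=\{C\}$, so $|\Phi_1|=1$, matching the empty product. For the inductive step, axiom (b2) guarantees that every $C'\in\Phi_w$ is the endpoint of a unique gallery $(C=C_0,C_1,\ldots,C_k=C')$ of type $s_1\cdots s_k$ through $C$; indeed, the initial subgallery $(C_0,\ldots,C_{k-1})$ has type $s_1\cdots s_{k-1}$ which is reduced, hence $\delta(C,C_{k-1})=s_1\cdots s_{k-1}$. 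Conversely, given any such subgallery (there are $q_{s_1}\cdots q_{s_{k-1}}$ possibilities for $C_{k-1}$ by induction), the chambers $s_k$-adjacent to $C_{k-1}$ and distinct from $C_{k-1}$ are exactly $q_{s_k}$ in number by regularity, and each gives a $C_k\in\Phi_w$ (the reducedness of $s_1\cdots s_k$ rules out the choice $C_k=C_{k-1}$ yielding $\delta(C,C_k)=s_1\cdots s_{k-1}$). This yields the count $q_{s_1}\cdots q_{s_k}$. To see that this product depends only on $w$ and not on the reduced expression, one invokes Matsumoto's theorem (any two reduced expressions for $w$ are connected by braid moves) together with the fact that $s\mapsto q_s$ is constant on each conjugacy class of generators, as encoded in the definition of $\bf q$.

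Summing over $w\in W_T$ yields
\begin{equation*}
\mathrm{Card}(\Phi)=\sum_{w\in W_T}|\Phi_w|=\sum_{w\in W_T}q_{s_1}\cdots q_{s_k}=\gamma_{_{W_T,T}}({\bf q}),
\end{equation*}
which is precisely the growth series of $(W_T,T)$ evaluated at ${\bf q}$. The main technical obstacle lies in verifying well-definedness of $q_{s_1}\cdots q_{s_k}$ on $W_T$, which rests on Matsumoto's theorem and the conjugacy-class invariance of the thickness; the rest of the argument is a direct bookkeeping in the building.
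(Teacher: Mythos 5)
The paper gives no proof of this statement at all: it is imported verbatim from Davis's book (Corollary~18.1.18), so there is nothing internal to compare against. Your argument is correct and is essentially the standard proof behind that citation: identify $\Phi$ with $\{C'\mid \delta(C,C')\in W_T\}$, stratify by Weyl distance, and show $|\Phi_w|=q_{s_1}\cdots q_{s_k}$ for a reduced expression $w=s_1\cdots s_k$ by induction on $\ell(w)$, using regularity for the factor $q_{s_k}$. The only point you gloss is the ``no double counting'' half of the induction step, i.e.\ the uniqueness of the penultimate chamber: if $D_1\neq D_2$ both satisfy $\delta(C,D_i)=s_1\cdots s_{k-1}$ and both lie in the $s_k$-panel of $C'$, then $D_1$ and $D_2$ are themselves $s_k$-adjacent, so extending a gallery of type $s_1\cdots s_{k-1}$ from $C$ to $D_1$ by the step $D_1\to D_2$ is a gallery of reduced type $s_1\cdots s_k$, whence $\delta(C,D_2)=w$ by (b2), a contradiction; this (or the standard gate/projection property of panels) is what makes your bijective count exact, and your parenthetical remark about the initial subgallery does not by itself deliver it. Also, the appeal to Matsumoto's theorem is dispensable: since $|\Phi_w|$ is defined without choosing a reduced word, the identity $|\Phi_w|=q_{s_1}\cdots q_{s_k}$ for every reduced expression already proves the monomial is well defined; conjugacy-invariance of $s\mapsto q_s$ is only needed to make sense of the thickness vector $\mathbf{q}$ and of the multivariate series $\gamma_{_{W_T,T}}(\mathbf{q})$ as in Davis's Equation~(17.3).
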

\subsubsection{Type-preserving automorphism groups of buildings} Let $\Delta$ be a building of type $(W,S)$  and consider the associated chamber graph $(\caC_{\Delta},E\caC_{\Delta})$: the edge-coloured connected simplicial graph with vertex set $$\caC:=\{C\mid\text{$C$ is a chamber of $\Delta$}\},$$ where two vertices $C$ and $D$ are connected by an edge of color $s$ if and only if the chambers $C$ and $D$ are $s$-adjacent in $\Delta$. In particular, if $\Delta$ has finite thickness then $(\caC,E\caC)$ is locally finite. A {\em type-preserving automorphism $h$ of $\Delta$} is a graph automorphism that preserves
the coloring of the edges (cf.~\cite[\S~4]{Kr22}). Denote by $\Aut_0(\Delta)$ the group of type-preserving automorphisms of $\Delta$. The natural topology on the group $\Aut_0(\Delta)$ is the
permutation topology. Namely, the topology with the family $\{\mathrm{Fix}(F) \mid \text{$F$ finite subset of $\caC$}\}$ as a
neighbourhood basis of the identity, where 
$\mathrm{Fix}(F)$ denotes the pointwise stabilizer $\{h\in\Aut_0(\Delta)\mid \forall x\in F\ h(x) = x\}$. Assume that $\Delta$ has finite thickness. It is not difficult to prove that the stabilizer of a chamber is compact and open in the permutation topology. Consequently, $\Aut_0(\Delta)$ is a t.d.l.c.~group; see for example \cite{Kr22}.
\subsubsection{The Davis' realization} 
Given a poset $P$, its geometric realization is defined to be the flag simplicial complex of $P$. Let $\Delta$ be a building of type $(W,S)$ and  denote by $\caR(\Delta)$ the poset of all spherical residues in $\Delta$. The Davis' realization $|\Delta| $ of $\Delta$ is defined to be the geometric realization of the poset
$\caR(\Delta)$, which is a contractible simplicial complex (cf.~\cite[Corollary~18.3.6]{dav:book}). Moreover, if $\Delta$ has finite thickness then $|\Delta| $  is locally finite.
\begin{rem}\label{rem:dav chamber}
    Let $\euS(W,S)$ be the poset of spherical subsets of $S$.
    The map $\caR(\Delta)\to \euS(W,S)$ associating to each spherical $T$-residue its type $T\subseteq S$ induces a map of geometric realizations $\varphi\colon|\Delta| \to |\euS(W,S)|$. Since every simplex in $|\Delta|$ has totally ordered vertices, every simplex in $|\Delta|$ comes with a maximal vertex and a minimal vertex. Given a chamber $C$, $|\Delta|_{\geq \{C\}}$ denotes the set of all simplices of $|\Delta|$ with minimal vertex the $\emptyset$-residue $\{C\}$ (i.e., all the chains of the form $\mathrm{Res}_\emptyset(C)\subseteq \mathrm{Res}_{T_1}(C)\subseteq\cdots\subseteq\mathrm{Res}_{T_k}(C)$) and $\varphi$-restriction  $|\Delta|_{\geq \{C\}}\to |\euS(W,S)|$ is a homeomorphism. For $T\in\euS(W,S)$ we denote by 
\begin{equation}
\label{eq:chains}
|\euS(W,S)|_{\geq T}^{(n)}=\{\,(T_i)_{1\leq i\leq n+1}\in\euS(W,S)^{n+1}\mid T_{1}=T,\, T_i\subsetneq T_{i+1}\,\}    
\end{equation}
the set of all increasing chains of length $n$ in $\euS(W,S)$ starting in $T$.
    The realization of $\bigcup_{n\geq1} \euS_{\geq T}^{(n)}(W,S)$ is the link $\mathrm{Lk}(T)$ of $T$ in the flag complex $|\euS(W,S)|$ (cf.~\cite[Definition A.6.1 and comment right after]{dav:book}).
\end{rem}
Let $h$ be a type-preserving
automorphism of $\Delta$. Since the $W$-distance function $\delta$ is $h$-invariant, i.e., $\delta(h(C), h(D)) = \delta(C, D)$, $h$ induces a simplicial map of $|\Delta| $ (furthermore, as explained in \cite[\S~1.1]{brw:build}, this assignment defines an injective homomorphism from
$\Aut_0(\Delta)$ into the group of homeomorphisms of $|\Delta|$). 
\begin{prop}\label{prop:proper} Let $\Delta$ be a building of type $(W,S)$ of finite thickness. If $G$ is a  closed subgroup of $\Aut_0(\Delta)$, then $G$ acts on $|\Delta| $ with compact open vertex stabilisers. 
\end{prop}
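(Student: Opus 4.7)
The plan is to work through the vertex set of $|\Delta|$ as described in Remark~\ref{rem:dav chamber}: every vertex $v$ of $|\Delta|$ corresponds to a spherical residue $\Phi=\mathrm{Res}_T(C)$ with $T\in\euS(W,S)$ and $C$ a chamber of $\Delta$, and the $G$-action on $|\Delta|$ induced from the $G$-action on $\Delta$ sends $v$ to the vertex associated with $g\Phi$. Hence the stabiliser of $v$ in $G$ equals the setwise stabiliser $G_\Phi=\{g\in G\mid g\Phi=\Phi\}$, and the task reduces to showing that $G_\Phi$ is a compact open subgroup of $G$.

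First I would point out that $\Phi$ is \emph{finite}: since $\Delta$ has finite thickness and $T$ is spherical, Proposition~\ref{prop:index} gives $\mathrm{Card}(\Phi)=\gamma_{_{W_T,T}}(\mathbf{q})<\infty$. Then I would study the pointwise stabiliser
$$F_\Phi=\bigcap_{D\in\Phi}\mathrm{Stab}_{\Aut_0(\Delta)}(D),$$
and observe that $F_\Phi$ is compact and open in $\Aut_0(\Delta)$: each chamber stabiliser is compact and open in the permutation topology (as recalled just before the proposition), and a finite intersection of compact open subgroups of a Hausdorff topological group is again compact and open.

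Next I would pass from the pointwise to the setwise stabiliser. The quotient $\mathrm{Stab}_{\Aut_0(\Delta)}(\Phi)/F_\Phi$ embeds into the finite symmetric group $\mathrm{Sym}(\Phi)$, so $F_\Phi$ has finite index in $\mathrm{Stab}_{\Aut_0(\Delta)}(\Phi)$; consequently $\mathrm{Stab}_{\Aut_0(\Delta)}(\Phi)$ is a finite disjoint union of $F_\Phi$-cosets, hence itself compact and open in $\Aut_0(\Delta)$. Finally, intersecting with the closed subgroup $G$ preserves both openness (in the subspace topology) and compactness, so $G_\Phi=G\cap \mathrm{Stab}_{\Aut_0(\Delta)}(\Phi)$ is a compact open subgroup of $G$, as desired.

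I do not expect a substantial obstacle here. The only nontrivial input beyond elementary topological-group bookkeeping is the finiteness of spherical residues in a locally finite building, which is precisely Proposition~\ref{prop:index}; the remainder amounts to the standard passage from pointwise to setwise stabiliser via a finite symmetric-group quotient, combined with the fact that a closed subgroup of a t.d.l.c.~group inherits compact open subgroups by intersection.
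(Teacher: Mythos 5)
Your argument is correct, but it is organised quite differently from the paper: the paper disposes of this proposition in one line by citing \cite[Lemma~5.13]{Kr22} for the fact that $G$ acts with compact open stabilisers on the set of spherical residues, whereas you reconstruct that fact from scratch. Your route --- identify a vertex of $|\Delta|$ with a spherical residue $\Phi$, note that $\Phi$ is a finite set of chambers, observe that the pointwise stabiliser of $\Phi$ is a finite intersection of the compact open chamber stabilisers, pass to the setwise stabiliser via the finite quotient inside $\mathrm{Sym}(\Phi)$, and finally intersect with the closed subgroup $G$ --- is a complete and elementary substitute for the external lemma, so it makes the statement self-contained at the cost of a few lines of bookkeeping. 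One small correction: you justify the finiteness of $\Phi$ by Proposition~\ref{prop:index}, but that proposition is stated for \emph{regular} buildings, while Proposition~\ref{prop:proper} only assumes finite thickness. The finiteness you need does not require regularity; it is exactly the remark made in the paper's discussion of spherical residues, namely that in a locally finite building every spherical residue is finite (a consequence of (b2) and the finiteness of $W_T$), so you should cite that observation rather than the cardinality formula. With that substitution your proof is sound, and it also silently delivers the slightly stronger fact used later (stabilisers of all simplices, being finite intersections of vertex stabilisers, are again compact open).
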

\begin{proof}
   The fact that $G$ acts with compact open stabilisers on the set of spherical residues of $\Delta$ is a consequence of \cite[Lemma~5.13]{Kr22}.
\end{proof}
\begin{cor}\label{cor:chi build}
 Let $\Delta$ be a regular building of type $(W,S)$ with thickness ${q}$, and let $G$ be a unimodular chamber-transitive closed subgroup of $\Aut_0(\Delta)$. Then 
\begin{equation*}\label{eq:chi poincare}
   \tilde\chi_G=\frac{1}{\tgamma_{_{W,S}}({ q})}\cdot\mu_{\caO}
\end{equation*}
where $\caO\in\mathcal{CO}(G)$ is the stabiliser of a chamber and $\tgamma_{_{W,S}}(t)$ denotes the rational function of the Poincar\'e series $\gamma_{_{W,S}}(t)$ associated to the Coxeter group $(W,S)$.
\end{cor}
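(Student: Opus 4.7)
The plan is to apply the topological-model formula~\eqref{eq:typeF} to the action of $G$ on the Davis realization $|\Delta|$, and then to identify the resulting alternating sum with $1/\tgamma_{_{W,S}}(q)$. By Proposition~\ref{prop:proper} the closed subgroup $G\leq\Aut_0(\Delta)$ acts on the contractible CW-complex $|\Delta|$ with compact open stabilisers; since $S$ is finite, $\euS(W,S)$ is finite, and finite thickness combined with chamber-transitivity forces each skeleton to carry only finitely many $G$-orbits, so $|\Delta|$ serves as a topological model for $G$ in the sense of Subsection~\ref{ss:top mod} (at least in the standard cases where $|\Delta|$ is finite-dimensional).

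Next I would parametrise $G$-orbits on cells. A $k$-cell of $|\Delta|$ is a strict chain $R_0\subsetneq\cdots\subsetneq R_k$ of spherical residues whose types $T_i=\mathrm{type}(R_i)$ form a chain $T_0\subsetneq\cdots\subsetneq T_k$ in $\euS(W,S)$. Fix a chamber $C_0$ with stabiliser $\caO$. Because $G$ is type-preserving and chamber-transitive, each orbit has a unique representative of the form $\bigl(\mathrm{Res}_{T_0}(C_0),\ldots,\mathrm{Res}_{T_k}(C_0)\bigr)$, and its set-wise stabiliser coincides with the set-wise stabiliser $G_{R_{T_0}}$ of the minimum residue: any $g$ fixing $R_{T_0}$ sends $R_{T_i}=\mathrm{Res}_{T_i}(C_0)$ to $\mathrm{Res}_{T_i}(gC_0)=R_{T_i}$, since $gC_0\in R_{T_0}\subseteq R_{T_i}$. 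Chamber-transitivity then yields $|G_{R_{T_0}}:\caO|=|R_{T_0}|=\gamma_{_{W_{T_0},T_0}}(q)$ by Proposition~\ref{prop:index}, so $\mu_\caO(G_{R_{T_0}})=\gamma_{_{W_{T_0},T_0}}(q)$; no simplex is inverted, since a type-preserving action cannot reverse a strict chain. Substituting into~\eqref{eq:typeF} produces
\begin{equation*}
\tchi_G=\mu_\caO\cdot\sum_{k\geq 0}(-1)^k\sum_{\substack{T_0\subsetneq\cdots\subsetneq T_k\\ \text{in }\euS(W,S)}}\frac{1}{\gamma_{_{W_{T_0},T_0}}(q)}.
\end{equation*}

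The main obstacle is the Solomon--Tits-type identity
\begin{equation*}
\sum_{k\geq 0}(-1)^k\sum_{\substack{T_0\subsetneq\cdots\subsetneq T_k\\ \text{in }\euS(W,S)}}\frac{1}{\gamma_{_{W_{T_0},T_0}}(q)}=\frac{1}{\tgamma_{_{W,S}}(q)}.
\end{equation*}
I would establish it along one of two routes: either re-indexing by the minimum $T_0$ and evaluating the inner alternating chain count as a Möbius-function computation on the poset $\euS(W,S)$, whose intervals are Boolean (so $\mu(T_0,T_k)=(-1)^{|T_k\setminus T_0|}$), collapsing the double sum to the classical Poincaré-series inclusion-exclusion; or, more cleanly, by invoking Proposition~\ref{prop:zeta build}, whereby the Bruhat-type decomposition $G=\bigsqcup_{w\in W}\caO g_w\caO$ with $\mu_\caO(\caO g_w\caO)=q^{\ell(w)}$ yields $\tzeta_{_{G,\caO}}(-1)=\tgamma_{_{W,S}}(q)$, and Theorem~E (using that chamber-transitivity implies Weyl-transitivity in the regular setting) closes the argument.
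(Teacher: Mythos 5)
Your main route is, in substance, the paper's own proof: the paper likewise applies the simplicial Euler--Poincar\'e formula (Theorem~\ref{thm:chi simpl}, equivalently \eqref{eq:typeF}) to the Davis realization, parametrises the $G$-orbits of simplices by chains $T_0\subsetneq\cdots\subsetneq T_k$ in $\euS(W,S)$ based at a fixed chamber, identifies the stabiliser of such a chain with the setwise stabiliser of the minimal residue, and computes its index over $\caO$ as $\gamma_{_{W_{T_0},T_0}}(q)$ via Proposition~\ref{prop:index}; your orbit/stabiliser arguments are correct and match formula \eqref{eq:chi 2}. The only divergence is the final combinatorial identity: the paper quotes \cite[Lemma~2(5)]{cd}, while you propose Hall/M\"obius on the Boolean intervals of $\euS(W,S)$. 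Note that after the M\"obius collapse you are not yet done: you still need the classical Coxeter identities $\sum_{T\subseteq S'}(-1)^{|T|}/\gamma_{_{W'_T,T}}(t)=t^{N'}/\gamma_{_{W',S'}}(t)$ for spherical $(W',S')$ and $\sum_{T\in\euS(W,S)}(-1)^{|T|}/\gamma_{_{W_T,T}}(t)=1/\tgamma_{_{W,S}}(t^{-1})$ (cf.~\cite[Ch.~17]{dav:book}), which is precisely the content that the Charney--Davis citation packages; so route~(a) is an unpacking of the paper's reference rather than a genuinely different argument. Also, your hedge about finite-dimensionality is unnecessary: $S$ is finite, hence $\euS(W,S)$ is finite and $\dim|\Delta|\leq|S|$, and chamber-transitivity gives finitely many orbits of cells in each dimension.

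Route~(b) must be discarded. It is circular within the paper's logic: the identity $\tchi_G=\tzeta_{_{G,\caO}}(-1)^{-1}\cdot\mu_\caO$ in Proposition~\ref{prop:zeta build} (and hence in Theorem~E) is itself proved by invoking Corollary~\ref{cor:chi build}, so it cannot be used to establish that corollary. Moreover, the assertion that chamber-transitivity implies Weyl-transitivity ``in the regular setting'' is false: Weyl-transitivity (transitivity on pairs of chambers at each fixed Weyl-distance) is strictly stronger than chamber-transitivity even for buildings of uniform thickness, already for trees, i.e.\ type $\widetilde{A}_1$; the corollary is deliberately stated under the weaker hypothesis, and without Weyl-transitivity the double cosets $\caO g\caO$ are not indexed by $W$, so Proposition~\ref{prop:zeta build} does not even apply. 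Keep route~(a); drop route~(b).
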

\begin{proof}
By Proposition~\ref{prop:proper}, the augmented chain complex $(C_\bullet(|\Delta| ),\der_\bullet,\eps)$ is a projective resolution of the trivial left $G$-module $\Q$
in $\QGdis$ (cf. Subsection~\ref{ss:simplicial}). Let $C$ be a chamber of $\Delta$ and denote by $\caO$ its stabiliser in $G$, which is compact and open. Since the action is chamber-transitive, the simplices in $|\Delta|_{\geq \{C\}}$ are representatives of the orbits of the $G$-action on $|\Delta|$ (cf. Remark~\ref{rem:dav chamber}). Thus,
by \eqref{eq:exchi}, one has
\begin{equation*}\label{eq:chi build}
  \tilde\chi_G=\mathlarger{\sum}_{\sigma\subseteq |\Delta|_{\geq \{C\}}}\frac{(-1)^{\mathrm{dim}\sigma}}{\mu_{\caO}(G_\sigma)}\cdot \mu_{\caO}
\end{equation*}
where $G_\sigma$ is the stabilizer of the simplex $\sigma$. Since $|\Delta|_{\geq \{C\}}$ is homeomorphic to the realization  $|\euS(W,S)|$ (cf. Remark~\ref{rem:dav chamber}), one computes
\begin{align}\label{eq:chi 2}
    \tchi_G    =\mathlarger{\sum}_{T\in\euS(W,S)} \mathlarger{\sum}_{k\geq0}(-1)^k\frac{\mathrm{Card}(\euS^{(k)}_{\geq T}(W,S))}{\mu_{\caO}(G_T)}\cdot\mu_{\caO},
\end{align}
where $\euS^{(k)}_{\geq T}(W,S)$ is defined in \eqref{eq:chains} and $G_T$ denotes the setwise stabiliser of the $T$-residue containing the chamber $C$.
As $\sum_{k\geq0} (-1)^k\mathrm{Card}(\euS^{(k)}_{\geq T}(W,S))=1-\chi(Lk(T))$ (cf.~\eqref{eq:chains} and the comment right after), from \eqref{eq:chi 2} one obtains
\begin{align}
\tchi_G&=\mathlarger{\sum}_{T\in\euS(W,S)} \frac{1-\chi(Lk(T))}{\mu_{\caO}(G_T)}\cdot\mu_{\caO}
\intertext{and, as $\mu_{\caO}(G_T)=[G_T\colon \caO]=\gamma_{_{W_T,T}}(q)$ by Proposition~\ref{prop:index}, }
&=\mathlarger{\sum}_{T\in\euS(W,S)} \frac{1-\chi(Lk(T))}{\gamma_{_{W_T,T}}(q)}\cdot\mu_{\caO}=\frac{1}{\tgamma_{_{W,S}}( q)}\cdot\mu_{\caO},
\end{align}
where we used a theorem of R.~Charney and M.~Davis (cf.~\cite[Lemma~2(5)]{cd}) for the last
identity.
\end{proof}
\begin{rem}
The latter result shows that the Euler--Poincar\'e characteristic $\tchi_{G}$ 
of $G$ coincides
with the {\em canonical measure} $G$ as defined in \cite[\S~18.4, p.~311]{dav:book}, which is also an Euler--Poincar\'e measure in the sense of \cite[\S~3.3]{ser:coh}.
\end{rem}
\begin{rem}\label{rem:chi build}
    Let $\bar G$ be a unimodular t.d.l.c.~group acting on $\Delta$ with compact open stabilizers. If the action is chamber-transitive then the statement of Corollary~\ref{cor:chi build} holds also for $\bar G$.
\end{rem}
\begin{rem} Due to the relation between $\tchi_G$ and the $L^2$-betti numbers of $G$ given in Remark~\ref{rem:betti}, the formula \eqref{eq:chi poincare} can also be deduced by \cite[Corollary~3.4]{dymara}.
\end{rem}
\begin{example}[Topological Kac-Moody groups]
\label{ex:EPKM} 
Let $S$ be a finite set. A matrix $A=(a_{s,t})_{s,t\in S}$ with integer entries satisfying
\begin{itemize}
\item[(i)] $a_{s,s}=2$ and $a_{s,t}\leq 0$ for $s\not=t$;
\item[(ii)] $a_{s,t}=0 \Longleftrightarrow a_{t,s}=0$
\end{itemize}
is called a \emph{generalized Cartan matrix} over the set $S$
(cf.~\cite[\S~7.1.1]{remy:kac}).
 A quintuple $(S,A,\Lambda, (c_s)_{s\in S}, (h_s)_{s\in S})$
where $\Lambda$ is a free $\Z$-module of rank $|S|$,
$\Lambda^\vee=\Hom(\Lambda,\Z)$, $c_s\in \Lambda$,
$h_s\in\Lambda^\vee$, satisfying
$\langle c_s,h_t\rangle = a_{t,s}$
is called a \emph{Kac-Moody root datum}. Let $\boG_A$ be the associated
Tits functor, i.e., for every field $\F$, $\boG_A(\F)$ is the $\F$-Kac-Moody group
acting on the twin building $\Delta_\pm$ of type $(W,S)$, where $(W,S)$
is the Coxeter group associated to $A$. 
For a finite field $\F$, $\Delta_+$ is a locally finite simplicial complex, and we will denote by $\hboG_A(\F)$ the topological Kac-Moody group 
of Remy-Ronan type acting on $\Delta_+$ (cf.~\cite{rr:KM}).
By construction and Corollary~\ref{cor:chi build} one obtains the following:
if $q=|\F|$ is the order of $\F$, then
 \begin{equation}
 \label{eq:davis}
 \chi_{\hboG_A(\F)}=\tfrac{1}{\tgamma_{_{W,S}}(q)}\cdot \mu_{\caO},
 \end{equation}
 where $\caO=\stab_{\hboG_A}(C)$ is the stabilizer of a chamber $C$ in $\Delta_+$,
 and $\mu_\caO$ is the left-invariant Haar measure of $\hboG_A(\F)$ satisfying
 $\mu_\caO(\caO)=1$.
\end{example}
\subsubsection{The Euler--Poincar\'e characteristic of split semisimple simply-connected algebraic groups}\label{ss:algebraic} Let $p$ be a prime number, let $F$ be a non-archimedean locally compact field 
of residue characteristic $p$ and let $q$ be the cardinality of the residue 
field $\eu{k}_{F}$ of $F$.
Denote by $G$ the group of $F$-points of a {split} 
semisimple simply-connected algebraic group scheme $\mathbf{G}$ defined over $F$ and let $\Iw$ be an Iwahori subgroup  of $G$ (see 5.2.6 of \cite{bt:2}). Denote by $(\widetilde W,\widetilde S)$ the associated affine Weyl group.
By \eqref{eq:exchi}, the Euler--Poincar\'e characteristic of $G$ is given by
\begin{equation}
    \tchi_G=
\sum_{J\subsetneq\widetilde{S}}  
(-1)^{|\widetilde{S}\setminus J|-1}\mu_{P_J}
,\end{equation}
where $P_J$ is the parahoric subgroup associated to $J\subsetneq\widetilde S$. 
Corollary~\ref{cor:chi build} yields
\[\tchi_G 
=\frac{\mu_\Iw}{\tgamma_{_{  \widetilde W,\widetilde{S}}}(q)}.
\]
 Bott's theorem  (cf.~\cite[\S8.9]{hum:cox})  implies that
\[\tgamma_{_{\widetilde W,\widetilde{S}}}(t) 
=\gamma_{_{W,S}}(t) \prod_{i=1}^{n}\frac{1}{(1-t^{m_i})}
=\prod_{i=1}^{n}\frac{1+t+\cdots+t^{m_i}}{(1-t^{m_i})},\]
where $\{m_1,\dots,m_n\}$ are the 
exponents of $(W,S)$ (which denotes the spherical Weyl group associated to $G$), and one obtains
\begin{equation}
    \tchi_G=(-1)^n\prod_{i=1}^{n}\frac{(q^{m_i}-1)}{1+q+\cdots+q^{m_i}}\mu_\Iw 
    \end{equation}
and deduces that $\tchi_G\in\boh^+(G)$ if $n$ even and $\tchi_G\in\boh^-(G)$ if $n$ odd.


\section{Double coset zeta functions}\label{s:dbz}
Throughout this section $G$ will denote a t.d.l.c.~group, $\caO$ a
compact open subgroup of $G$ and $\mu_\caO$ the Haar measure of $G$ satisfying 
$\mu_\caO(\caO)=1$.

Note that for every $g\in G$ one has
$\mu_\caO(\caO g\caO)=|\caO:\caO\cap{}^g\caO|$.
Let $\caR$ be a set of representatives of $\caO$-double cosets of $G$. 
One says that $G$ satisfies the {\em{double coset property with respect to $\caO$}} if
$\caR(n)=\{r\in \caR\mid \mu_\caO(\caO r\caO)=n\}$
is a finite set for every positive integer $n$. For such a group one defines a formal Dirichlet series by
\begin{equation}
	\label{eq:defzeta}
	\zeta_{_{G,\caO}}(s)=\sum_{n\geq 1} |\caR(n)| 
	n^{-s}=\sum_{r\in\caR}\mu_\caO(\caO 
	r\caO)^{-s}.
\end{equation}
It should be mentioned that $|\caR(n)|$, and so $\zeta_{_{G,\caO}}(s)$, does not depend on the  choice  of $\caR$.

\begin{rem}\label{rem:munorma}
	Let $\Delta$ be the modular function of $G$ (see Section \ref{ss:intfun}).
	Since $\Delta(x)=1$ for every $x\in N_{G}(\caO)$, one has  $\mu_\caO(\caO 
	g\caO)=\mu_\caO(\caO x_1gx_2\caO)$ for every $g\in G$ and $x_1,x_2\in 
	N_{G}(\caO)$.
\end{rem}
\begin{prop}\label{prop:bdcosgrth}
If $G$ satisfies the double coset property with respect to some
compact open subgroup $\caO$,
then $G$ satisfies the double coset property with respect to every compact open subgroup $\caO^\prime$.
\end{prop}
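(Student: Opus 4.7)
The plan is to reduce, via the intermediate compact open subgroup $\caU = \caO \cap \caO'$, to two cases in which one subgroup contains the other with finite index. Since $\caU$ is open in each of the compact groups $\caO$ and $\caO'$, both indices $k = [\caO : \caU]$ and $k' = [\caO' : \caU]$ are finite. I will first transfer the double coset property from $\caO$ down to $\caU$, and then push it up from $\caU$ to $\caO'$.

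The technical core -- and the step requiring the most care -- is a two-sided comparison lemma: for compact open subgroups $H \subseteq K$ of $G$ with $m = [K:H] < \infty$ and any $g \in G$, one has
\[
\tfrac{1}{m}\,\mu_H(HgH) \;\leq\; \mu_K(KgK) \;\leq\; m\,\mu_H(HgH).
\]
To see this, one rewrites $\mu_H(HgH) = [H : H \cap gHg^{-1}]$ and factors $[K : H \cap gHg^{-1}]$ in two ways (through $H$ and through $K \cap gKg^{-1}$):
\[
m \cdot [H : H \cap gHg^{-1}] \;=\; [K : K \cap gKg^{-1}] \cdot [K \cap gKg^{-1} : H \cap gHg^{-1}],
\]
and then uses the bound $[K \cap gKg^{-1} : H \cap gHg^{-1}] \leq m^2$ coming from the injection into $(K/H) \times (gKg^{-1}/gHg^{-1})$. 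A direct corollary is that each $K$-double coset decomposes as a union of at most $m^2$ distinct $H$-double cosets.

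For the downward step ($H = \caU \subseteq \caO = K$, $m = k$), every $\caU$-double coset of $\mu_\caU$-measure $n$ lies in a unique $\caO$-double coset of $\mu_\caO$-measure $N \in [n/k,\,nk]$, and each such $\caO$-double coset contains at most $k^2$ $\caU$-double cosets. The hypothesis on $\caO$ therefore gives
\[
|\caR_\caU(n)| \;\leq\; k^2 \sum_{N \in [n/k,\,nk] \cap \N} |\caR_\caO(N)| \;<\; \infty.
\]
For the upward step ($H = \caU \subseteq \caO' = K$, $m = k'$), the assignment $\caO' r \caO' \mapsto \caU r \caU$ is injective (since $\caU r \caU \subseteq \caO' r \caO'$ determines the containing $\caO'$-double coset), and by the comparison lemma it maps $\caR_{\caO'}(n)$ into $\bigsqcup_{n' \in [n/k',\,nk']\cap\N} \caR_\caU(n')$, a finite union of finite sets. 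This concludes the argument.

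The main point of difficulty is obtaining both directions of the comparison lemma with the right constant $m$; once that bookkeeping is in place, both reductions are routine counting, and the intervals $[n/m,\,nm]$ are automatically finite because $m < \infty$.
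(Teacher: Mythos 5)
Your proof is correct and follows essentially the same route as the paper: reduce to nested pairs via $\caU=\caO\cap\caO'$, prove the two-sided comparison $\tfrac{1}{m}\mu_H(HgH)\leq\mu_K(KgK)\leq m\,\mu_H(HgH)$ for $H\subseteq K$ of index $m$, and bound by $m^2$ the number of $H$-double cosets inside a $K$-double coset. The only (inessential) difference is that you obtain the comparison lemma from index identities and the injection into $(K/H)\times(gKg^{-1}/gHg^{-1})$, whereas the paper argues directly with coset decompositions and coverings.
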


\begin{proof}
Let $\caO,\caU\subseteq G$ be compact open subgroups of $G$,
$\caO\subseteq \caU$. Then $\caO$ is of finite index in $\caU$,
and $n=|\caU:\caO|=\mu(\caU)/\mu(\caO)$. Let $\caS\subset \caU$ be a set of representatives for $\caU/\caO$,
i.e., $\caU=\bigsqcup_{s\in\caS} s\caO$.
By definition, there exists a surjective map
$\alpha_\ast\colon \caO\backslash G\slash \caO\to
\caU\backslash G\slash \caU$ given by
$\alpha_\ast(\caO x\caO)=\caU x\caU$ for $x\in G$. From this one concludes that for $y\in G$ one has
\begin{equation}\label{eq:fibsize}
\alpha_\ast^{-1}(\caU y\caU)=\{\,\caO u_1^{-1}y u_2\caO\mid u_1,u_2\in\caU\,\}
=\{\,\caO s_1^{-1}y s_2\caO\mid s_1,s_2\in \caS\,\}
\end{equation}
In particular, the fibres of $\alpha_\ast$ are finite
of cardinality less or equal to $n^2$.
Let $\caR\subset G$ be a set of representatives
for $\caO\backslash G\slash \caO$, and let
$\caR^\prime\subset G$ be a set of representatives
for $\caU\backslash G\slash \caU$.
By definition, for every $z\in\caR$ there exists a unique
$\alpha(z)\in\caR^\prime$ such that $\caU z\caU=\caU\alpha(z)\caU$.
Hence one has a map $\alpha\colon\caR\to\caR^\prime$.
As $G=\bigcup_{z\in\caR}\caU z\caU$, this map is surjective,
and thus has a section $\beta\colon\caR^\prime\to\caR$, i.e.,
$\alpha\circ\beta=\iid_{\caR^\prime}$.
By construction, $\beta$ is injective. The map $\alpha$ is not 
necessarily injective, but
by \eqref{eq:fibsize}
the cardinality of its fibres is bounded by $n^2$.
For $z\in\caR$ let $k_z\in\N$ such that 
$\mu(\caO z\caO)=k_z\cdot\mu(\caO)$, i.e., there exist elements
$x_1,\ldots,x_{k_z}\in G$ such that $\caO z\caO=\bigsqcup_{1\leq i\leq k_z} x_i\caO$. In particular,
$\caU z\caU=\bigcup_{\substack{1\leq i\leq k_z\\ s\in\caS}}
sx_i\caU$ from which one concludes that
$\mu(\caU z\caU)\leq k_z\cdot n\cdot\mu(\caU)$. This yields
\begin{equation}
\label{eq:cosgrth1}
\frac{k_z}{n}=\frac{\mu(\caO z\caO)}{\mu(\caU)}\leq
\frac{\mu(\caU z\caU)}{\mu(\caU)}\leq k_z\cdot n.
\end{equation}
Hence for $y\in\caR^\prime$ \eqref{eq:cosgrth1} implies that
$\mu(\caO\beta(y)\caO)/\mu(\caO)\leq n\cdot\mu(\caU y\caU)/\mu(\caU)$
and thus
\begin{equation}
\label{eq:cosgrth2}
a_m^\caU\leq\sum_{1\leq j\leq m\cdot n} a_j^\caO,
\end{equation}
where $a_m^\caU=|\{\,y\in\caR^\prime\mid\mu(\caU y\caU)=m\cdot\mu(\caU)\,\}|$, etc. Similarly, for $z\in\caR$
\eqref{eq:cosgrth1} implies $\mu(\caU\alpha(z)\caU)/\mu(\caU)\leq k_z\cdot n$ and thus
\begin{equation}
\label{eq:cosgrth3}
a_m^\caO\leq    n^2\cdot\sum_{1\leq j\leq m\cdot n} a_j^\caU
\end{equation}
Thus from \eqref{eq:cosgrth2} and \eqref{eq:cosgrth3} one concludes that
$G$ has the double coset property with respect to $\caO$ if, and only if,
$G$ has the double coset property with respect to $\caU$.
If $\caO$ and $\caO^\prime$ are arbitrary compact open subgroups
of $G$, then $\caO\cap\caO^\prime$ is a compact open subgroup
which is contained in both of them. Hence the previously mentioned statement implies the claim.
\end{proof}
Proposition~\ref{prop:bdcosgrth} can be seen as a quantitative version of the well-known characterisation of t.d.l.c.~groups with a compact open normal subgroup given by R.~M\"oller in \cite[Corollary~2.14]{moller}.
\begin{prop}
    Let $G$ be a t.d.l.c.~group and $\caO$ a compact open subgroup.
Then $G$ has a compact open normal subgroup if, and only if, there exists $m_{\caO}\in\mathbb N$ such that  $|\caR(n)|=0$ for all $n\geq m_{\caO}$. Moreover, if $G$ has a compact open normal subgroup then $G$ does not have the double coset property.
\end{prop}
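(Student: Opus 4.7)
The plan is to reformulate the hypothesis as a uniform bound on commensurability indices and then reduce the equivalence to M\"oller's characterisation quoted just above the proposition. Saying $|\caR(n)|=0$ for all $n\geq m_\caO$ is the same as $\mu_\caO(\caO g\caO)<m_\caO$ for every $g\in G$, and since $\mu_\caO(\caO g\caO)=[\caO:\caO\cap g\caO g^{-1}]$ the hypothesis is equivalent to $\sup_{g\in G}[\caO:\caO\cap g\caO g^{-1}]<\infty$. This is exactly the criterion of \cite[Corollary~2.14]{moller}.

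For the forward direction I would give a fully explicit bound without appealing to M\"oller. Let $K$ be a compact open normal subgroup of $G$ and put $U:=\caO\cap K$. The second isomorphism theorem applied to the compact open subgroup $\caO K$ gives $[\caO:U]=[\caO K:K]<\infty$ and $[K:U]=[\caO K:\caO]<\infty$; write $d:=[\caO:U]$ and $e:=[K:U]$. Since $K$ is normal, $gUg^{-1}=g\caO g^{-1}\cap K$ is again an open subgroup of $K$ of index $e$, so the standard estimate $[K:U\cap gUg^{-1}]\leq[K:U][K:gUg^{-1}]=e^2$ yields $[U:U\cap gUg^{-1}]\leq e$ and hence $[\caO:U\cap gUg^{-1}]\leq de$. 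Because $U\cap gUg^{-1}\subseteq\caO\cap g\caO g^{-1}$, we conclude $\mu_\caO(\caO g\caO)\leq de$ uniformly in $g$, so the choice $m_\caO:=de+1$ does the job.

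The reverse implication is then precisely \cite[Corollary~2.14]{moller}, and this is the step I expect to be the main obstacle to a self-contained proof. The natural candidate for the desired compact open normal subgroup is the normal core $N:=\bigcap_{g\in G}g\caO g^{-1}$, which is certainly closed and normal in $G$, but to conclude $N$ is open one needs more than the bound on the indices $[\caO:\caO\cap g\caO g^{-1}]$ alone. Indeed, a profinite group may carry infinitely many open (even open normal) subgroups of a given finite index, so the obvious try---intersecting the $\caO$-cores of the various $\caO\cap g\caO g^{-1}$---does not visibly produce an open subgroup. Overcoming this obstruction requires the Schlichting-type construction M\"oller supplies.

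For the final assertion, assume $G$ has a compact open normal subgroup. By what we just proved there exists $m_\caO$ with $\caR=\bigsqcup_{n<m_\caO}\caR(n)$ a finite disjoint union. If in addition $G$ satisfied the double coset property each $|\caR(n)|$ would be finite, forcing $|\caR|<\infty$ and hence $G=\bigsqcup_{r\in\caR}\caO r\caO$ to be a finite union of compact sets, so $G$ itself would be compact (the case tacitly excluded, since compact t.d.l.c.~groups trivially satisfy the double coset property by \ref{rem:quesG}). The contrapositive is the claim.
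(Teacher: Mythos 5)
The paper states this proposition without any proof, offering it only as a reformulation of M\"oller's characterisation \cite[Corollary~2.14]{moller}, so there is no internal argument to measure you against; judged on its own terms your proof is correct and in the same spirit. Your explicit bound in the forward direction works: with $U=\caO\cap K$ and $K$ normal, $gUg^{-1}=g\caO g^{-1}\cap K$ has index $e=[K:U]$ in $K$, the intersection estimate gives $[U:U\cap gUg^{-1}]\leq e$, hence $\mu_\caO(\caO g\caO)=[\caO:\caO\cap g\caO g^{-1}]\leq [\caO:U\cap gUg^{-1}]\leq de$ uniformly in $g$ --- a small but genuine addition, since the paper leaves even this direction to the citation. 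Delegating the converse to M\"oller is exactly what the paper intends, and your observation that the normal core of $\caO$ is closed but not visibly open correctly identifies why a Schlichting-type construction cannot be avoided there. Finally, you are right to flag the compact case in the last assertion: a compact group has a compact open normal subgroup (itself) and trivially satisfies the double coset property (cf.~Remark~\ref{rem:quesG}), so that clause fails verbatim for compact $G$. What your argument actually establishes is that a group possessing both a compact open normal subgroup and the double coset property must be compact, i.e.\ the assertion is correct precisely for non-compact $G$, which is evidently the intended reading; making that hypothesis explicit (rather than calling it ``tacitly excluded'') is the only adjustment your write-up needs.
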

\subsection{Weyl-transitive actions on buildings with uniform thickness}\label{ss:weyltransitive}
Let $G$ be group acting Weyl-transitively on a locally finite building $\Delta$ of type $(W,S)$. This is equivalent to say that the action is chamber transitive and that, for every $w\in W$, the stabilizer of each chamber $C$ is transitive on the $w$-sphere
$$\{D\in\Delta\mid\delta(C,D)=w\}.$$
Choose a chamber $C$ and denote by $B$ its stabiliser in $G$. Thus the following are satisfied:
\begin{enumerate}
    \item [(a)]
the set of chambers of $\Delta$ can be identified with $G/B$;
\item[(b)] the $B$-orbits $\{BgB\mid g\in G\}$ on the chambers are in 1-1 correspondence with the elements of $W$;

\item[(c)] there is a  set-theoretic decomposition 
\begin{equation*}\label{eq:bruhat}
    G=\bigsqcup_{w\in W}Bg_wB,
\end{equation*}
which is known as Bruhat-Tits decomposition \cite[\S~6.1.4]{ab:build}. 
\end{enumerate}
In particular, $Bg_wB$ corresponds to the $B$-orbit of a chamber $D$ at distance $w=\delta(C,D)$, which coincides with the $w$-sphere centered in $C$.
\begin{prop}\label{prop:zeta build}
  Let $\Delta$ be a locally finite building of type $(W,S)$ and let $G$ be a t.d.l.c.~group acting Weyl-transitively on $\Delta$ with compact open stabilizers. Assume that $\Delta$ has uniform thickness $q+1$, i.e., all the entries of the thickness vector of $\Delta$ equal $q$. Hence,
   \begin{equation}\label{eq:zeta build}
   \zeta_{G,B}(s)=\sum_{w\in W}\mu_B(B 
	g_wB)^{-s}=\gamma_{_{W,S}}(q^{-s}),
 \end{equation}
 where $B$ denotes the stabiliser of a chamber in $G$. 
	Moreover,
	$\zeta_{G,B}(s)$ defines a  meromorphic function 
	$\tzeta_{G,B}\colon\C\to\bC$ given by 
	$\tzeta_{G,B}(s)=\tgamma_{_{W,S}}(q^{-s})$ for every 
	$s\in\C$, and 
 \begin{equation}\label{eq:chi zeta}
     \tchi(G)=\frac{1}{\tzeta_{G,B}(-1)}\cdot\mu_B.
     \end{equation}
\end{prop}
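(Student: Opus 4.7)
The plan is to reduce everything to counting chambers at Weyl distance $w$ from a fixed chamber. Since $G$ acts Weyl-transitively on $\Delta$ with compact open stabilisers and $B$ is the stabiliser of a fixed chamber $C$, the Bruhat--Tits decomposition gives $G=\bigsqcup_{w\in W} B g_w B$, so $\{g_w\mid w\in W\}$ is a set of representatives $\caR$ of $\caO$-double cosets for $\caO=B$. Thus the sum defining $\zeta_{G,B}(s)$ is indexed by $W$, and what remains is to compute $\mu_B(Bg_w B)$ for each $w\in W$.

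I would compute this via the orbit-stabiliser theorem: $\mu_B(Bg_wB)=|B:B\cap g_w B g_w^{-1}|$ equals the number of chambers in the $B$-orbit of $g_w C$ under the identification $G/B\leftrightarrow\{\text{chambers}\}$ of point~(a) of Section~\ref{ss:weyltransitive}. By Weyl-transitivity combined with point~(b), this $B$-orbit is exactly the $w$-sphere $\{D\in\Delta\mid\delta(C,D)=w\}$. A standard argument for buildings of uniform thickness $q+1$ now shows this sphere has cardinality $q^{\ell(w)}$: fix a reduced expression $w=s_1\cdots s_{\ell(w)}$; then by axiom~(b2) every chamber $D$ with $\delta(C,D)=w$ is reached from $C$ by a unique gallery of type $s_1\cdots s_{\ell(w)}$, and at each step the next chamber can be chosen among the $q$ chambers $s_i$-adjacent to the current one but distinct from it (the non-backtracking constraint at each step being forced by the reducedness of the word, so the count is $q$ at every step). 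Hence
\begin{equation*}
\zeta_{G,B}(s)=\sum_{w\in W}q^{-s\ell(w)}=\gamma_{_{W,S}}(q^{-s}),
\end{equation*}
which proves~\eqref{eq:zeta build}.

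For the meromorphic extension, I would invoke the well-known fact that the growth series $\gamma_{_{W,S}}(t)$ of a Coxeter system is the Taylor expansion at $0$ of a rational function $\tgamma_{_{W,S}}(t)\in\C(t)$ (cf.~the Charney--Davis--Solomon formula used in the proof of Corollary~\ref{cor:chi build}). Composing with the entire function $s\mapsto q^{-s}$ then produces the sought meromorphic function $\tzeta_{G,B}(s)=\tgamma_{_{W,S}}(q^{-s})$ on $\C$.

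Finally, for identity~\eqref{eq:chi zeta}, since $G$ acts chamber-transitively with compact open stabilisers on the regular locally finite building $\Delta$ of uniform thickness $q+1$, Corollary~\ref{cor:chi build} applies directly and yields $\tchi_G=\tgamma_{_{W,S}}(q)^{-1}\cdot\mu_B$. Evaluating the meromorphic function at $s=-1$ gives $\tzeta_{G,B}(-1)=\tgamma_{_{W,S}}(q)$, so the formula follows at once. The main obstacle is the chamber-counting step in paragraph two: one must be careful that Weyl-transitivity (not just chamber-transitivity) is genuinely used to identify the $B$-orbit of $g_w C$ with the full $w$-sphere, and that the uniform thickness hypothesis is what collapses the multi-variate growth series into the one-variable series $\gamma_{_{W,S}}(q^{-s})$.
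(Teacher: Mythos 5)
Your computation of \eqref{eq:zeta build} and of the meromorphic continuation follows the same route as the paper: the paper simply cites \cite[Lemma~18.1.17]{dav:book} for the identity $\mu_B(Bg_wB)=q^{\ell(w)}$, whereas you unpack it via the orbit--stabiliser identification $\mu_B(Bg_wB)=|B:B\cap g_wBg_w^{-1}|$ and the standard count of the $w$-sphere by galleries of reduced type; the rationality of $\gamma_{_{W,S}}(t)$ (hence of $\tgamma_{_{W,S}}(q^{-s})$) is quoted in both arguments. That part of your proposal is fine.

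The gap is in the final step. Corollary~\ref{cor:chi build} (and Remark~\ref{rem:chi build}) is stated for a \emph{unimodular} chamber-transitive group, and indeed $\tchi_G$ is only defined, via the Hattori--Stallings rank, when $G$ is unimodular. Unimodularity is not among the hypotheses of Proposition~\ref{prop:zeta build}, so before one may even write $\tchi(G)$, let alone invoke Corollary~\ref{cor:chi build}, one must prove that a t.d.l.c.\ group acting Weyl-transitively with compact open stabilisers on a locally finite building is unimodular; the paper does this by citing \cite[Corollary~5]{brw:build}. Your proof asserts that Corollary~\ref{cor:chi build} ``applies directly'', silently assuming unimodularity, so as written the deduction of \eqref{eq:chi zeta} is incomplete --- though the fix is only to supply the unimodularity statement (by citation or by a short argument).
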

\begin{proof}
    To prove \eqref{eq:zeta build} it suffices to notice that, by \cite[Lemma~18.1.17]{dav:book}, $\mu_B(Bg_wB)=q^{\ell(w)}$. By \cite[Corollary~5]{brw:build}, $G$ is unimodular and Corollary~\ref{cor:chi build} yields \eqref{eq:chi zeta} (cf. Remark~\ref{rem:chi build}).
\end{proof}

\subsubsection{Spherical standard parabolic subgroups} Given a subset $J\subseteq S$, let
$$P_J:=\bigsqcup_{w\in W_J}Bg_wB.$$
By \cite[Proposition~6.27]{ab:build}, $P_J$ is the stabiliser of a face of the chamber $C$. The subgroups $P_J$ are called {\em standard parabolic subgroups of $G$}. The aim is to investigate $\zeta_{{G,P_J}}(s)$ for $J$ spherical, in which case the subgroup $P_J$ is compact and open in $G$.

\smallskip
For  $J$ and $K$ subsets of $S$
let ${}^J{W}^K$ be the set of representatives of minimal 
length of $({W}_J,{W}_K)$-double cosets 
of ${W}$ (cf.~\cite[Proposition~2.23]{ab:build}), and let 
${W}^K={}^\emptyset{W}^K$ and 
${}^J{W}={}^J{W}^\emptyset$.
By \cite[Lemma 2.25]{ab:build}, if $x\in {}^J{W}^K$ then
${W}_K\cap x^{-1} {W}_J 
x={W}_{K\cap x^{-1} J x}$.
\begin{lem}\label{lemma:respectlength}
	Let $x\in {}^J{W}^K$. The map $c_x: 
	{W}_{K\cap 
		x^{-1}Jx}\longrightarrow {W}_{J\cap x Kx^{-1}}$ 
	defined by $w\longmapsto 
	xwx^{-1}$ is an isomorphism which respects the length, that is  
	$\ell(w)=\ell(c_x(w))$ for every $w\in {W}_{K\cap 
		x^{-1}Jx}$.	
\end{lem}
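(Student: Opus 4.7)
The plan is to verify in turn that $c_x$ is well-defined, an isomorphism, and length-preserving, with the length statement being the core of the argument.

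\emph{Well-definedness.} If $w \in W_{K \cap x^{-1}Jx}$, then $w \in W_K$ and simultaneously $w \in x^{-1}W_Jx$. Consequently $xwx^{-1}$ lies in $W_J \cap xW_Kx^{-1}$. Since $x \in {}^JW^K$ forces $x^{-1}\in {}^KW^J$, the identity $W_J\cap xW_Kx^{-1}=W_{J\cap xKx^{-1}}$ follows by applying the already-quoted equality from \cite[Lemma~2.25]{ab:build} to $x^{-1}$ (with the roles of $J$ and $K$ exchanged). Therefore $c_x(w)\in W_{J\cap xKx^{-1}}$.

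\emph{Isomorphism.} Conjugation by $x$ is clearly a group homomorphism, and the analogous map $c_{x^{-1}}\colon W_{J\cap xKx^{-1}}\to W_{K\cap x^{-1}Jx}$ (well-defined by the symmetric argument) provides a two-sided inverse, so $c_x$ is a group isomorphism.

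\emph{Length preservation.} Here I would invoke the standard characterisation of $(W_J,W_K)$-minimal double coset representatives: $x\in {}^JW^K$ if and only if both
\begin{equation*}
\ell(xv)=\ell(x)+\ell(v)\quad\forall v\in W_K,\qquad \ell(ux)=\ell(u)+\ell(x)\quad\forall u\in W_J.
\end{equation*}
Given $w\in W_{K\cap x^{-1}Jx}$, apply the first identity to $v=w\in W_K$ to obtain $\ell(xw)=\ell(x)+\ell(w)$, and apply the second identity to $u=xwx^{-1}\in W_J$ to obtain $\ell((xwx^{-1})x)=\ell(xwx^{-1})+\ell(x)$. Since $xw=(xwx^{-1})x$, comparing these two expressions and cancelling $\ell(x)$ yields $\ell(w)=\ell(xwx^{-1})=\ell(c_x(w))$, as required.

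The only mildly delicate point is the well-definedness, because it implicitly uses the cited lemma twice (for $x$ and for $x^{-1}$); everything else is a direct application of the reduced-expression characterisation of ${}^JW^K$, so I do not anticipate any substantive obstacle.
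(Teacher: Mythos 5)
Your proof is correct. For well-definedness and the isomorphism property you argue exactly as the paper does: the quoted identity $W_K\cap x^{-1}W_Jx=W_{K\cap x^{-1}Jx}$ is used twice, once for $x\in{}^J W^K$ and once for $x^{-1}\in{}^K W^J$, and conjugation is invertible. Where you genuinely diverge is the length statement. The paper argues at the level of generators: $c_x$ sends each simple reflection in $K\cap x^{-1}Jx$ to a simple reflection in $J\cap xKx^{-1}$, and since these sets are the Coxeter generating systems of the two standard parabolic subgroups (whose internal length functions agree with the restriction of $\ell$), an isomorphism matching generating sets of length-one elements preserves length. You instead exploit the minimality of $x$ in its double coset through the one-sided additivity $\ell(xv)=\ell(x)+\ell(v)$ for $v\in W_K$ and $\ell(ux)=\ell(u)+\ell(x)$ for $u\in W_J$, applied to $v=w$ and $u=xwx^{-1}=c_x(w)$, and then cancel $\ell(x)$ in $xw=(xwx^{-1})x$. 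Only the forward implication of the characterisation you quote is needed, and it follows at once from the fact that a minimal double-coset representative is minimal in $xW_K$ and in $W_Jx$; this is standard and consistent with the paper's citation of \cite[Proposition~2.23]{ab:build}. Your variant has the small advantage of avoiding the implicit appeal to the equality of parabolic and ambient length, which the paper's one-line sketch quietly uses; the paper's version in exchange records the marginally stronger fact that $c_x$ matches the two distinguished generating sets, i.e.\ is an isomorphism of Coxeter systems. For the way the lemma is used later (the equality of the Poincar\'e series of $W_{J\cap xJx^{-1}}$ and $W_{J\cap x^{-1}Jx}$ just before \eqref{eq:dPJ1}), either formulation suffices.
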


\begin{proof}
	Since $x^{-1}\in {}^K{W}^J$, one has 
	$c_x({W}_{K\cap 
		x^{-1}Jx})=x({W}_K\cap 
	x^{-1}{W}_Jx)x^{-1}={W}_{J\cap x 
		Kx^{-1}}$ and so the map is well-defined.
	It is a group isomorphism because it is the restriction of the 
	conjugation by $x$.
	Moreover, since $K\cap x^{-1}Jx$ generates ${W}_{K\cap 
		x^{-1}Jx}$ and 
	since $\ell(s)=\ell(c_x(s))=1$ for every $s\in K\cap x^{-1}Jx$, the map 
	$c_x$ respects the length.
\end{proof}

\begin{lem}
	\label{lemma:decwtilde}
	Every $w\in {W}$ can be written in a unique way as 
	$w=yxz$ with 
	$y\in   W_J$, $x\in{}^J  W^K$ and  
	$z\in {}^{K\cap x^{-1}Jx}  W_K$. Moreover, one has 
    $\ell(w)=\ell(y)+\ell(x)+\ell(z)$.
\end{lem}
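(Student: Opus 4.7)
My plan is to prove the lemma in three parts: existence of the decomposition, length-additivity, and uniqueness.

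For existence, I would start from $w\in W$ and invoke the classical fact that $W$ decomposes into $(W_J,W_K)$-double cosets, each admitting a unique minimal-length representative in ${}^J W^K$ (cf.~\cite[Proposition~2.23]{ab:build}). Thus there is a unique $x\in {}^J W^K$ with $w\in W_J x W_K$, and one may write $w = a x b$ with $a\in W_J$, $b\in W_K$. Setting $M:=K\cap x^{-1}Jx$, the standard length-additive decomposition $W_K = W_M\cdot {}^M W_K$ at the level of the parabolic Coxeter system $(W_K,K)$ with subsystem $(W_M,M)$ yields $b = m z$ with $m\in W_M$, $z\in {}^M W_K$ and $\ell(b)=\ell(m)+\ell(z)$. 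Lemma~\ref{lemma:respectlength} then gives $x m x^{-1}=c_x(m)\in W_{J\cap xKx^{-1}}\subseteq W_J$, so the choice $y := a\cdot c_x(m)\in W_J$ produces the desired $w = y x z$.

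For length-additivity $\ell(w)=\ell(y)+\ell(x)+\ell(z)$, the identity $\ell(xz)=\ell(x)+\ell(z)$ is immediate from $x\in W^K$ and $z\in W_K$; it then suffices to establish $xz\in {}^J W$, for then $\ell(yxz)=\ell(y)+\ell(xz)$ follows. I would prove $xz\in {}^J W$ by contradiction: if $\ell(sxz)<\ell(xz)=\ell(x)+\ell(z)$ for some $s\in J$, apply the deletion condition to the non-reduced expression $s\cdot\mathrm{red}(x)\cdot\mathrm{red}(z)$ (of length $1+\ell(x)+\ell(z)$, reducing to length $\ell(x)+\ell(z)-1$). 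Three of the four possible locations for the two deleted letters are discarded at once: deleting two inside $\mathrm{red}(x)$ or two inside $\mathrm{red}(z)$ contradicts the reducedness of those expressions, and deleting $s$ together with one letter of $\mathrm{red}(x)$ would give $\ell(sx)<\ell(x)$, contradicting $x\in {}^J W$. In the remaining case one deletes $s$ and one letter of $\mathrm{red}(z)$; the strong exchange condition then forces $x^{-1}sx$ to be a reflection of $W_K$, and since $s\in W_J$ this reflection lies in $W_K\cap x^{-1}W_Jx=W_{K\cap x^{-1}Jx}=W_M$ (the identity recalled before Lemma~\ref{lemma:respectlength}). But $z\in {}^M W_K$ implies $\ell((x^{-1}sx)z)=\ell(x^{-1}sx)+\ell(z)>\ell(z)$, contradicting the length drop forced by strong exchange.

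For uniqueness, suppose $y_1 x z_1 = y_2 x z_2$ with $y_i\in W_J$ and $z_i\in {}^M W_K$. Then $y_2^{-1}y_1 = x(z_2 z_1^{-1})x^{-1}\in W_J\cap xW_Kx^{-1}=W_{J\cap xKx^{-1}}$; applying $c_x^{-1}$ yields $z_2 z_1^{-1}\in W_M$, so $z_1$ and $z_2$ lie in the same right $W_M$-coset of $W_K$. Since both are minimal-length representatives, $z_1=z_2$, and consequently $y_1=y_2$. The main obstacle is the length-additivity step, and specifically the claim $xz\in {}^J W$ for $z\in {}^M W_K$: it requires a careful case analysis using the deletion/strong exchange conditions of Coxeter groups, the crucial input being the identity $W_K\cap x^{-1}W_Jx=W_M$ which also underlies Lemma~\ref{lemma:respectlength}.
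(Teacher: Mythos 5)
Your argument is correct in substance, but it takes a genuinely different route from the paper: the paper does not prove the lemma at all, it simply refers to the proof of \cite[Theorem~1.2]{Curtis}, whereas you give a self-contained argument from the exchange/deletion conditions. The skeleton you use (the unique minimal representative $x$ of the double coset $W_JwW_K$, the length-additive splitting $b=mz$ of the $W_K$-part along $W_M\cdot{}^M W_K$ with $M=K\cap x^{-1}Jx$, transporting $m$ into $W_J$ via the isomorphism $c_x$ of Lemma~\ref{lemma:respectlength}, and reducing length-additivity to the claim $xz\in{}^J W$) is exactly the classical Howlett--Curtis argument; what your proof buys is independence from the external reference, at the cost of redoing the Coxeter combinatorics that the paper outsources, and the key input $W_K\cap x^{-1}W_Jx=W_{K\cap x^{-1}Jx}$ is indeed the same fact the paper quotes from \cite[Lemma~2.25]{ab:build}.

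One small incompleteness in the deletion step: the two deleted letters of the word $s\cdot\mathrm{red}(x)\cdot\mathrm{red}(z)$ can sit in five, not four, configurations; you omit the case in which one letter is removed from $\mathrm{red}(x)$ and one from $\mathrm{red}(z)$ while $s$ survives. That case is harmless---it would produce a word of length $\ell(x)+\ell(z)-2$ for $xz$, contradicting the identity $\ell(xz)=\ell(x)+\ell(z)$ you had already established---but it must be mentioned. You can also bypass the enumeration entirely: since $\ell(s\,xz)<\ell(xz)$ and the concatenation $\mathrm{red}(x)\cdot\mathrm{red}(z)$ is already a reduced word for $xz$, the one-letter exchange condition lets you delete a single letter of that reduced word, and only your two substantive cases occur (letter in the $x$-part, contradicting $x\in{}^J W$; letter in the $z$-part, forcing $x^{-1}sx\in W_K\cap x^{-1}W_Jx=W_M$ and contradicting $z\in{}^{M}W_K$). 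Finally, your uniqueness paragraph compares two decompositions with the same $x$; strictly one should add the one line that any decomposition $w=y'x'z'$ with $x'\in{}^J W^K$ puts $w\in W_Jx'W_K$, so $x'=x$ by uniqueness of minimal double-coset representatives (\cite[Proposition~2.23]{ab:build}), a point you do record in the existence step.
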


\begin{proof}
	See the proof of \cite[Theorem 1.2]{Curtis}.
\end{proof}
By \cite[Exercise~6.38]{ab:build}, the following generalization of the Bruhat decomposition holds: for any two standard parabolic subgroups $P_J$ and $P_K$ there is a bijection 
$  W_J\backslash 
  W/  W_K\to P_J\backslash 
G/P_K.$ Therefore, ${}^J  W^K$ is a set 
of representative of $(P_J,P_K)$-double cosets of $G$ and 
\begin{equation}\label{eq:decGPJ}
	G=\bigsqcup_{x\in {}^J  W^K}P_JxP_K
\end{equation}
where  one writes $P_JxP_K$ instead of $P_Jg_xP_K$.
In particular, we recover \eqref{eq:bruhat}  when $J=K=\emptyset$.

\begin{lem}\label{lemma:decPJxPJ}
	For every 
	$x\in{}^J  W^K$ 
	one has \[ P_JxP_K=\bigsqcup_{y\in 
		  W_J}\bigsqcup_{z\in 
		{}^{K\cap x^{-1}Jx}  W_K}B yxz B.\]
\end{lem}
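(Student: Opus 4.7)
The plan is to reduce the statement to the combinatorial identification of $W_J x W_K$ via Lemma~\ref{lemma:decwtilde}, and then invoke the generalized Bruhat decomposition \eqref{eq:decGPJ} (cited from \cite[Exercise~6.38]{ab:build}) together with the ordinary Bruhat decomposition $G=\bigsqcup_{w\in W}BwB$.

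First I would establish the intermediate identity
\[
P_JxP_K=\bigsqcup_{w\in W_JxW_K}BwB.
\]
The containment $\supseteq$ is immediate: if $w=yxz$ with $y\in W_J$ and $z\in W_K$, then from the definitions $P_J=\bigsqcup_{y\in W_J}ByB$ and $P_K=\bigsqcup_{z\in W_K}BzB$ one gets $BwB\subseteq ByB\cdot g_x\cdot BzB\subseteq P_J\,g_x\,P_K$. For $\subseteq$, any $B$-double coset $BwB$ meeting $P_JxP_K$ must be contained in it (since $P_JxP_K$ is a union of $B$-double cosets by left and right $B$-invariance), so $P_JwP_K=P_JxP_K$, and the bijection $W_J\backslash W/W_K\to P_J\backslash G/P_K$ from \eqref{eq:decGPJ} yields $w\in W_JxW_K$. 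Disjointness of the union is inherited from the ordinary Bruhat decomposition of $G$.

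Next I would parametrize $W_JxW_K$ using Lemma~\ref{lemma:decwtilde}. That lemma asserts that every $w\in W$ admits a unique decomposition $w=y'x'z'$ with $y'\in W_J$, $x'\in{}^JW^K$ and $z'\in{}^{K\cap x'^{-1}Jx'}W_K$. If $w\in W_JxW_K$ then $x'$ is the minimal-length representative of $W_JwW_K=W_JxW_K$, hence $x'=x$. Conversely, for any $y\in W_J$ and $z\in{}^{K\cap x^{-1}Jx}W_K$ the element $yxz$ manifestly lies in $W_JxW_K$. Thus $(y,z)\mapsto yxz$ is a bijection
\[
W_J\times{}^{K\cap x^{-1}Jx}W_K\;\longrightarrow\;W_JxW_K,
\]
and substituting into the intermediate identity above yields the asserted decomposition of $P_JxP_K$ as a disjoint union of cosets $ByxzB$.

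The only potential obstacle is bookkeeping: one must confirm that the decomposition of Lemma~\ref{lemma:decwtilde} with the \emph{specific} constraint $x'\in{}^JW^K$ forces $x'=x$ on the given $(W_J,W_K)$-double coset, and that the uniqueness clause of Lemma~\ref{lemma:decwtilde} rules out any repetition among the $ByxzB$'s. Both are immediate from the uniqueness of minimal-length representatives of $(W_J,W_K)$-double cosets in a Coxeter group, so no genuine difficulty arises beyond carefully matching indexing sets.
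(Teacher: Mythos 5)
Your argument is correct and takes essentially the same route as the paper: both use the generalized Bruhat decomposition \eqref{eq:decGPJ} to see that any $B$-double coset meeting $P_JxP_K$ has Weyl element in $W_JxW_K=W_J\,x\,{}^{K\cap x^{-1}Jx}W_K$, and Lemma~\ref{lemma:decwtilde} for the unique parametrization, which gives disjointness. The only cosmetic remark is that the inclusion $BwB\subseteq ByB\cdot g_x\cdot BzB$ in your $\supseteq$ step is more than is needed (and itself requires the length additivity of Lemma~\ref{lemma:decwtilde}); it suffices to note $Bg_{yxz}B\subseteq P_Jg_{yxz}P_K=P_Jg_xP_K$ by the very bijection you already invoke.
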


\begin{proof}
	Thanks to (\ref{eq:decGPJ}),
	if $P_JxP_K\cap B  wB\neq\emptyset$ with 
	$  w\in  W$, then 
	$P_JxP_K=P_J  wP_K$ and so $w\in 
	  W_Jx  W_K= 
	  W_Jx{}^{Q}  W_K$ where 
	$Q=K\cap x^{-1}Jx$.
	Hence, $P_JxP_K\subset \bigcup_{y\in 
	  W_J}\bigcup_{z\in 
		{}^{Q}  W_K}B yxzB$ which is a disjoint union 
	by 
	Lemma \ref{lemma:decwtilde} and it is clearly contained in $P_JxP_K$.
\end{proof}
For every subset $X\subset   W$, let 
\[\gamma_{_{X,{S}}}(t):=\sum_{x\in X}t^{\ell(x)}\in\Z[[t]]\]
be the Poincar\'e series of $X$ with respect to the length function $\ell$ defined by ${S}$. 
By definition, it is just a formal power series with non-negative integral 
coefficients, but sometimes it coincides with the Taylor expansion in $0$ of a 
rational function $\tgamma_{_{X,{S}}}(t)\in\C(t)$. If this is the case, we will say that 
$\gamma_{_{X,{S}}}(t)$ defines a rational function $\tgamma_{_{X,{S}}}(t)$. The series 
$\gamma_{_{  W,{S}}}(t)$ defines a rational function 
$\tgamma_{_{  W,{S}}}(t)$ \cite[\S~5.12]{hum:cox}.

Given a spherical subset $J$,  for every $x\in{}^J  W^J$, write $Q(x)=J\cap x^{-1}Jx$.

\begin{lem}\label{lemma:dPJ} Let $J$ be a spherical subset of $S$.
	For every $x\in{}^J  W^J$ one has 
	$\mu_{P_J}(P_JxP_J)
	=|P_J:P_{Q(x)}|q^{\ell(x)}$.
\end{lem}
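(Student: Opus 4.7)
The plan is to reduce the measure of $P_J x P_J$ to a sum over $B$-double cosets using Lemma~\ref{lemma:decPJxPJ}, and then to recognise the resulting expression as the claimed product. Throughout, I shall use that for a Haar measure $\mu$ on $G$, one has $\mu(BwB) = q^{\ell(w)}\mu(B)$ for every $w\in W$, which follows from the uniform thickness assumption (cf.~\cite[Lemma~18.1.17]{dav:book}, as recalled in the proof of Proposition~\ref{prop:zeta build}).

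First I would pass from $\mu_{P_J}$ to $\mu_B$. Since $B\subseteq P_J$ is of finite index and both are compact open, the Haar measure normalised on $P_J$ is just $\mu_{P_J}=|P_J:B|^{-1}\mu_B$. By Proposition~\ref{prop:index}, $|P_J:B|=\gamma_{_{W_J,J}}(q)$; similarly $|P_J:P_{Q(x)}|=\gamma_{_{W_J,J}}(q)/\gamma_{_{W_{Q(x)},Q(x)}}(q)$. So the statement is equivalent to
\begin{equation*}
\mu_B(P_J x P_J)=\frac{q^{\ell(x)}\,\gamma_{_{W_J,J}}(q)^2}{\gamma_{_{W_{Q(x)},Q(x)}}(q)}.
\end{equation*}

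Next I would combine Lemma~\ref{lemma:decPJxPJ} (applied with $K=J$) and Lemma~\ref{lemma:decwtilde}. The former yields the disjoint decomposition $P_JxP_J=\bigsqcup_{y\in W_J}\bigsqcup_{z\in{}^{Q(x)}W_J}ByxzB$, and the latter gives the length additivity $\ell(yxz)=\ell(y)+\ell(x)+\ell(z)$. Therefore
\begin{equation*}
\mu_B(P_J x P_J)=\sum_{y\in W_J}\sum_{z\in{}^{Q(x)}W_J}q^{\ell(y)+\ell(x)+\ell(z)}
=q^{\ell(x)}\,\gamma_{_{W_J,J}}(q)\,\gamma_{_{{}^{Q(x)}W_J,J}}(q).
\end{equation*}

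The last step, which is the crux, is to identify $\gamma_{_{{}^{Q(x)}W_J,J}}(q)$ with $\gamma_{_{W_J,J}}(q)/\gamma_{_{W_{Q(x)},Q(x)}}(q)$. This follows from the standard fact that every $w\in W_J$ factorises uniquely as $w=uv$ with $u\in W_{Q(x)}$ and $v\in{}^{Q(x)}W_J$ satisfying $\ell(w)=\ell(u)+\ell(v)$ (the minimal-length coset representative property, valid in the finite Coxeter group $W_J$, which is where we use that $J$ is spherical). This factorisation immediately gives the product formula $\gamma_{_{W_J,J}}(q)=\gamma_{_{W_{Q(x)},Q(x)}}(q)\cdot\gamma_{_{{}^{Q(x)}W_J,J}}(q)$. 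Substituting into the previous display yields the claim. The main obstacle is purely bookkeeping: one must keep straight the three different normalisations ($\mu_B$ versus $\mu_{P_J}$), the spherical/non-spherical distinction that guarantees finiteness of all Poincaré series involved, and the length-additivity properties underlying the products of Poincaré series.
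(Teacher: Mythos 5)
Your proof is correct and follows essentially the same route as the paper: convert $\mu_{P_J}$ to $\mu_B$, decompose $P_JxP_J$ into $B$-double cosets via Lemma~\ref{lemma:decPJxPJ}, apply the length additivity of Lemma~\ref{lemma:decwtilde} together with $\mu_B(BwB)=q^{\ell(w)}$, and identify the resulting Poincar\'e series with the index $|P_J:P_{Q(x)}|$. The only difference is that you spell out the final identification $\gamma_{_{{}^{Q(x)}W_J,J}}(q)=\gamma_{_{W_J,J}}(q)/\gamma_{_{W_{Q(x)},Q(x)}}(q)=|P_J:P_{Q(x)}|$ via the unique factorisation $w=uv$ with length additivity, a step the paper asserts without comment.
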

\begin{proof}
	One has $\mu_{P_J}(P_JxP_J)=\mu_B(P_JxP_J)/|P_J:B|$, which is equal to 
	\[\frac{1}{\gamma_{_{  W_J,J}}(q)}
	\mu_B\Big(\bigsqcup_{y\in 
		  W_J}\bigsqcup_{z\in 
		{}^{Q(x)}  W_J}B yxzB\Big)=
	\frac{1}{\gamma_{_{  W_J,J}}(q)}
	\sum_{y\in   W_J}\sum_{z\in 
		{}^{Q(x)}  W_J}q^{\ell(yxz)}\]
	by Lemma \ref{lemma:decPJxPJ}. 
	Now, by Lemma~\ref{lemma:decwtilde}, one
	has $\ell(yxz)=\ell(y)+\ell(x)+\ell(z)$ and consequently 
	$\mu_{P_J}(P_JxP_J)
	=\gamma_{_{{}^{Q(x)}  W_J,J}}(q)q^{\ell(x)}	
	=|P_J:P_{Q(x)}|q^{\ell(x)}$.
\end{proof}

\begin{prop}\label{prop:pQJJrational}
	For every $Q\subset J$, let 
	\[p_{_{Q,J}}=\{x\in{}^J  W^J\,|\, 
	J\cap x^{-1}Jx=Q\}.\]
	The series $\gamma_{p_{_{Q,J}},{S}}(t)$ defines a rational function 
	$\tgamma_{p_{_{Q,J}},{S}}(t)\in\C(t)$.
\end{prop}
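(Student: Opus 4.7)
The plan is to reduce the claim via Möbius inversion to rationality of an auxiliary ``upward-closed'' family of Poincaré series, and then to extract rationality of those from the general double-coset decomposition identity of Lemma \ref{lemma:decwtilde}. For each $P$ with $Q \subseteq P \subseteq J$, set
\[
B_P \;:=\; \{x \in {}^J W^J : P \subseteq J \cap x^{-1} J x\} \;=\; \bigsqcup_{R : P \subseteq R \subseteq J} p_{R, J}.
\]
Möbius inversion on the Boolean lattice of subsets of $J$ containing $Q$ then yields
\[
\gamma_{p_{Q,J}, S}(t) \;=\; \sum_{P : Q \subseteq P \subseteq J} (-1)^{|P \setminus Q|}\, \gamma_{B_P, S}(t),
\]
so it suffices to show that $\gamma_{B_P, S}(t)$ defines a rational function for every such $P$.

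For $x \in {}^J W^J$ the condition $P \subseteq x^{-1} J x$ is equivalent to $x W_P x^{-1} \subseteq W_J$, and hence to the equality of double cosets $W_J x W_P = W_J x$. In particular $B_P \subseteq {}^J W^P$. Applying Lemma \ref{lemma:decwtilde} to the pair $(J, P)$ and stratifying the sum by $R := P \cap x^{-1} J x \subseteq P$, one obtains the identity
\[
\frac{\gamma_{W, S}(t)}{\gamma_{W_J, J}(t)\,\gamma_{W_P, P}(t)} \;=\; \sum_{R \subseteq P} \frac{\gamma_{C_{P, R}, S}(t)}{\gamma_{W_R, R}(t)}, \qquad C_{P, R} := \{x \in {}^J W^P : P \cap x^{-1} J x = R\}.
\]
Since $J$ is spherical, $\gamma_{W_{J'}, J'}(t)$ is a polynomial for every $J' \subseteq J$, and $\gamma_{W, S}(t)$ is rational by \cite[\S 5.12]{hum:cox}. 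By induction on $|P|$ --- with base case $P = \emptyset$, for which $C_{\emptyset, \emptyset} = {}^J W$ has Poincaré series $\gamma_{W, S}(t)/\gamma_{W_J, J}(t)$ --- the identity above isolates the $R = P$ term and lets one solve for $\gamma_{C_{P, P}, S}(t)$ in terms of rational functions, giving $\gamma_{C_{P, P}, S}(t) \in \C(t)$.

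Finally, I would pass from $C_{P, P}$ to $B_P$ using the factorization $W^P = W^J \cdot (W_J \cap W^P)$ with additive lengths, valid because $W_J$ is finite. Every $x \in C_{P, P}$ decomposes uniquely as $x = y u$ with $y \in {}^J W^J$ and $u \in W_J \cap W^P$, so that
\[
\gamma_{C_{P, P}, S}(t) \;=\; \sum_{u \in W_J \cap W^P} t^{\ell(u)}\, \gamma_{B^{(u)}_P, S}(t), \qquad B^{(u)}_P := \{y \in {}^J W^J : y u W_P u^{-1} y^{-1} \subseteq W_J\},
\]
with $B^{(1)}_P = B_P$. Each parabolic subgroup $u W_P u^{-1}$ of the finite Coxeter group $W_J$ is $W_J$-conjugate to a unique standard parabolic $W_{P^{(u)}}$ with $P^{(u)} \subseteq J$, and this conjugation identifies $\gamma_{B^{(u)}_P, S}(t)$ with a polynomial multiple of $\gamma_{B_{P^{(u)}}, S}(t)$. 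As $P^{(u)}$ ranges over the finite set of subsets of $J$ whose parabolic is $W_J$-conjugate to $W_P$, the resulting finite linear system can be inverted by an outer induction on $|J \setminus P|$ to yield $\gamma_{B_P, S}(t) \in \C(t)$. The main obstacle is the precise bookkeeping in this last step: computing the polynomial shift relating $\gamma_{B^{(u)}_P, S}(t)$ and $\gamma_{B_{P^{(u)}}, S}(t)$, which invokes Howlett's structure theorem for normalizers of parabolic subgroups in finite Coxeter groups to control the length contributions coming from the $W_J$-conjugation.
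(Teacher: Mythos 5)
Your opening reduction (Möbius inversion from $p_{Q,J}$ to the sets $B_P=\{x\in{}^J W^J : P\subseteq J\cap x^{-1}Jx\}$) and the identity you derive from Lemma~\ref{lemma:decwtilde} for the pair $(J,P)$, namely
\[
\frac{\gamma_{_{W,S}}(t)}{\gamma_{_{W_J,J}}(t)\,\gamma_{_{W_P,P}}(t)}=\sum_{R\subseteq P}\frac{\gamma_{_{C_{P,R},S}}(t)}{\gamma_{_{W_R,R}}(t)},
\]
are both correct, but the induction that is supposed to extract $\gamma_{_{C_{P,P},S}}(t)$ from it does not close. The unknown terms in this identity are the series of $C_{P,R}$ for $R\subsetneq P$, whereas your inductive hypothesis (on $|P|$) only gives rationality of the series of $C_{R,R}$ with $|R|<|P|$. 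These are different sets: one has $C_{P,R}\subseteq C_{R,R}\cap{}^J W^P$, and membership in $C_{P,R}$ imposes the extra ``exact equality'' condition that no element of $P\setminus R$ lies in $x^{-1}Jx$; neither the intersection with ${}^J W^P$ nor this exclusion condition is expressible from the data the induction provides. Since for each $P$ you have a single identity but $2^{|P|}$ unknowns $\gamma_{_{C_{P,R},S}}(t)$, the system cannot be solved as described, and this is a genuine gap, not a bookkeeping issue.

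The final step has a second, independent error. The unique factorization of $x\in{}^J W^P$ is $x=yz$ with $y\in{}^J W^J$ and $z\in{}^{J\cap y^{-1}Jy}W_J\cap W^P$ (this is Lemma~\ref{lemma:decwtilde} with $K=J$, restricted to ${}^J W$); the admissible second factors depend on $y$. Your formula $\gamma_{_{C_{P,P},S}}(t)=\sum_{u\in W_J\cap W^P}t^{\ell(u)}\gamma_{_{B^{(u)}_P,S}}(t)$ sums over all $u$ independently of $y$ and therefore overcounts: for instance $y=1$ lies in every $B^{(u)}_P$, yet $1\cdot u\notin{}^J W^P$ for $u\neq 1$. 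Moreover the asserted identification of $\gamma_{_{B^{(u)}_P,S}}(t)$ with a polynomial multiple of $\gamma_{_{B_{P^{(u)}},S}}(t)$ via $W_J$-conjugation (the step you flag as ``the main obstacle'') is exactly the unproved heart of the matter: conjugation by $u\in W_J$ does not shift lengths on ${}^J W^J$ in a uniform way, so no such polynomial relation is available without a real argument. For comparison, the paper avoids all of this by invoking a theorem of Edgar: the Poincar\'e series of any set in the Boolean algebra generated by the sets $W_{x,y,m}=\{w\mid \ell(ywx)=\ell(y)+\ell(w)+\ell(x)-2m\}$ is rational, and $p_{Q,J}$ is exhibited as a finite Boolean combination of such sets; this handles the exact-equality condition $J\cap x^{-1}Jx=Q$ directly, which is precisely where your scheme breaks down.
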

\begin{proof}
	In \cite[Chapter~5]{Edgar}, the author considers the Boolean 
	subalgebra $\mathscr{B}$ of $\mathcal{P}(W)$ generated by 
	\[  W_{x,y,m}=\{w\in 
	  W\,|\,\ell(ywx)=\ell(y)+\ell(w)+\ell(x)-2m\}\]
	where $x,y\in   W$ and $m\in \N$ and he proves that the 
	Poincaré series of elements of $\mathscr{B}$ define rational functions in 
	$\C(t)$.
	In particular, all the following subsets of $W$ belongs to $\mathscr{B}$.
	\begin{align*}
		{}^J  W^J&= 
		\bigcap_{x\in J}  W_{x,1,0}\cap 
		\bigcap_{y\in J}  W_{1,y,0}\\
		\{w\in {}^J  W^J\,|\, wrw^{-1}=s\}&= 
		  W_{r,s,1}\cap {}^J  W^J 
		\text{ 
			for }r,s\in J\\
		\{w\in {}^J  W^J\,|\, wrw^{-1}\in J\}&= 
		\bigsqcup_{s\in J}\{w\in {}^J  W^J\,|\, wrw^{-1}=s\}
		\text{ for }r\in J\\
		\{w\in {}^J  W^J\,|\, wrw^{-1}\notin J\}&= 
		{}^J  W^J\setminus\{w\in 
		{}^J  W^J\,|\, 
		wrw^{-1}\in J\}
		\text{ for }r\in J\\
		\{w\in {}^J  W^J\,|\, wQw^{-1}\subset J\} &= 
		\bigcap_{r\in Q}\{w\in {}^J  W^J\,|\, wrw^{-1}\in 
		J\}
		\text{ for }Q\subset J\\
		\{w\in {}^J  W^J\,|\, w(J\setminus 
		Q)w^{-1}\cap J=\emptyset\}&=
		\bigcap_{r\in J\setminus Q}\{w\in {}^J  W^J\,|\, 
		wrw^{-1}\notin J\}
		\text{ for }Q\subset J
	\end{align*}
	Hence, since 
	$$p_{_{Q,J}}=\{w\in {}^J  W^J\,|\, wQw^{-1}\subset 
	J\}\cap 
	\{w\in {}^J  W^J\,|\, w(J\setminus 
	Q)w^{-1}\cap J=\emptyset\}$$
	belongs to $\mathscr{B}$, the series $\gamma_{p_{_{Q,J}},{S}}(t)$ defines a 
	rational function in $\C(t)$.
\end{proof}

In \cite{Chin4} is explained an algorithm for the explicit calculation of  
$\tgamma_{p_{_{Q,J}},{S}}(t)$.
By Lemma \ref{lemma:dPJ}, whenever $J$ is spherical, one has 
\begin{align}\label{eq:zetaPJ}
	\zeta_{G,P_J}(s)&=
	\sum_{x\in{}^J  W^J}
	|P_J:P_{Q(x)}|^{-s}
	q^{-\ell(x)s}
	=\sum_{Q\subset J}
	|P_J:P_{Q}|^{-s}
	\sum_{\substack{x\in{}^J  W^J\\Q(x)=Q}} 
	q^{-\ell(x)s}\nonumber\\
	&=
	\sum_{Q\subset J}
	|P_J:P_{Q}|^{-s}
	\gamma_{p_{_{Q,J}},{S}}(q^{-s}).
\end{align}
\begin{thm}\label{thm:PJ} Given a spherical set $J\subset S$,
	the series $\zeta_{G,P_J}(s)$
defines a  meromorphic function $\tzeta_{G,P_J}\colon\C\to\bC$ and one has 
	 $\tchi_G=\tzeta_{G,P_J}(-1)^{-1}\mu_{P_J}$.
\end{thm}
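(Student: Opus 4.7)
The plan is to derive both assertions from the finite expansion \eqref{eq:zetaPJ}, the rationality result of Proposition~\ref{prop:pQJJrational}, and the chamber-stabilizer case of Proposition~\ref{prop:zeta build}. For the meromorphic continuation, I would observe that since $J$ is spherical the group $W_J$ is finite, so the sum in \eqref{eq:zetaPJ} is indexed by the finite poset $\{Q:Q\subset J\}$. In each summand the factor $|P_J:P_Q|^{-s}$ is entire in $s$, and Proposition~\ref{prop:pQJJrational} provides a rational function $\tgamma_{p_{_{Q,J}},S}(t)\in\C(t)$ agreeing with $\gamma_{p_{_{Q,J}},S}(t)$ in its disk of convergence, whose composition with the entire map $s\mapsto q^{-s}$ is meromorphic on $\C$. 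A finite sum of meromorphic functions on $\C$ remains meromorphic, so one obtains the desired extension $\tzeta_{G,P_J}\colon\C\to\bC$.

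The heart of the argument will be to evaluate $\tzeta_{G,P_J}(-1)$ and reduce to the chamber-stabilizer case. The key identity I would establish is
\[
\gamma_{W,S}(t)=\gamma_{W_J,J}(t)^2\sum_{Q\subset J}\frac{\gamma_{p_{_{Q,J}},S}(t)}{\gamma_{W_Q,Q}(t)}.
\]
To derive it, I would first apply Lemma~\ref{lemma:decwtilde} with $K=J$ to secure the length-additive factorization $W=\bigsqcup_{x\in{}^JW^J}W_J\cdot x\cdot{}^{Q(x)}W_J$ with $Q(x)=J\cap x^{-1}Jx$, yielding
\[
\gamma_{W,S}(t)=\sum_{x\in{}^JW^J}t^{\ell(x)}\,\gamma_{W_J,J}(t)\,\gamma_{{}^{Q(x)}W_J,S}(t).
\]
Next I would invoke the standard length-additive coset decomposition $W_J=W_{Q(x)}\cdot{}^{Q(x)}W_J$ to rewrite $\gamma_{{}^{Q(x)}W_J,S}(t)=\gamma_{W_J,J}(t)/\gamma_{W_{Q(x)},Q(x)}(t)$, and finally group the summands by the value $Q(x)=Q$, using $p_{_{Q,J}}=\{x\in{}^JW^J:Q(x)=Q\}$ to recognise the inner sum as $\gamma_{p_{_{Q,J}},S}(t)$.

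To conclude, I would specialise $t=q$. The Bruhat decomposition $P_J=\bigsqcup_{w\in W_J}Bg_wB$ together with $|Bg_wB:B|=q^{\ell(w)}$ gives $|P_J:B|=\gamma_{W_J,J}(q)$, hence $|P_J:P_Q|=\gamma_{W_J,J}(q)/\gamma_{W_Q,Q}(q)$. Substituting into \eqref{eq:zetaPJ} at $s=-1$ and invoking the displayed identity produces
\[
\tzeta_{G,P_J}(-1)=\gamma_{W_J,J}(q)\sum_{Q\subset J}\frac{\tgamma_{p_{_{Q,J}},S}(q)}{\gamma_{W_Q,Q}(q)}=\frac{\tgamma_{W,S}(q)}{\gamma_{W_J,J}(q)}.
\]
Combining this with the chamber-stabilizer identity $\tchi_G=\tgamma_{W,S}(q)^{-1}\mu_B$ supplied by Proposition~\ref{prop:zeta build}, and the Haar-measure rescaling $\mu_{P_J}=\gamma_{W_J,J}(q)^{-1}\mu_B$ (forced by $\mu_B(P_J)=|P_J:B|=\gamma_{W_J,J}(q)$), will yield $\tchi_G=\tzeta_{G,P_J}(-1)^{-1}\mu_{P_J}$, as claimed. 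The principal technical obstacle will be the combinatorial identity for $\gamma_{W,S}(t)$ above, which requires combining two length-additive decompositions in the Coxeter system; once that is secured, the remainder is bookkeeping with Poincar\'e series and Haar-measure normalisations.
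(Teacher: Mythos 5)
Your proposal is correct and follows essentially the same route as the paper's proof: meromorphic continuation from \eqref{eq:zetaPJ} together with Proposition~\ref{prop:pQJJrational}, evaluation of $\tzeta_{G,P_J}(-1)$ via the length-additive decomposition of Lemma~\ref{lemma:decwtilde}, and the chamber-stabilizer formula $\tchi_G=\tgamma_{_{W,S}}(q)^{-1}\mu_B$ with the rescaling $\mu_B=\gamma_{_{W_J,J}}(q)\,\mu_{P_J}$. The only difference is cosmetic: you rewrite $\gamma_{_{{}^{Q}W_J,J}}(t)$ as $\gamma_{_{W_J,J}}(t)/\gamma_{_{W_Q,Q}}(t)$ (and cite Proposition~\ref{prop:zeta build} rather than Corollary~\ref{cor:chi build}), which matches the identity $|P_J:P_{Q(x)}|=\gamma_{_{{}^{Q(x)}W_J,J}}(q)$ used implicitly in Lemma~\ref{lemma:dPJ}.
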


\begin{proof}
	By Proposition \ref{prop:pQJJrational}, $\tgamma_{p_{_{Q,J}},{S}}(t)$ 
	is a rational 
	function in $t$ for every $Q\subset J$. Hence, by (\ref{eq:zetaPJ}), the 
	series $\zeta_{G,P_J}(s)$ admits a  meromorphic continuation to $\C$.
	By Lemma \ref{lemma:decwtilde}, one has
	\[\gamma_{_{  W,{S}}}(t)=
	\sum_{x\in {}^J  W^J} 
	\gamma_{_{  W_J,J}}(t)\; 
	\gamma_{_{{}^{Q(x)}  W_J,J}}(t)\;t^{\ell(x)}
	=\gamma_{_{  W_J,J}}(t)\sum_{Q\subset J} 
	\gamma_{_{{}^Q  W_J,J}}(t)\;\gamma_{p_{_{Q,J}},{S}}(t)
 \]
	and then, by (\ref{eq:zetaPJ}), one obtains
	$\tzeta_{G,P_J}(-1)=\frac{\tgamma_{_{  W,{S}}}(q)} 
	{\gamma_{_{  W_J,J}}(q)}$ which is rational in $q$. 
	Furthermore, Corollary~\ref{cor:chi build} yields
	\[\tchi_G=\frac{\mu_B}{\tgamma_{_{  W,{S}}}(q)}=
	\frac{\gamma_{_{  W_J,J}}(q)} 
	{\tgamma_{_{  W,{S}}}(q)}\mu_{P_J}= 
	\tzeta_{G,P_J}(-1)^{-1}\mu_{P_J}.\qedhere\]
\end{proof}
\begin{example}[Topological completions of groups with a root group datum] \label{ex:root} Let $G$ be a group and $(B,\Phi)$ a root datum. The reader is referred to \cite[\S~2.1]{cr} for the definition of a {\em root group datum $\{U_\alpha\}_{\alpha\in\Phi}$ of type $(B,\Phi)$ for $G$.} From any root group datum one can construct the BN-pairs $(B_{\pm},N,S)$ (cf.~\cite[\S~4.1.2]{cr}), and so any group $G$ endowed with a root group datum has two Weyl transitive actions on two buildings $\Delta_{\pm}$, whose kernels coincide with the center of $G$ (cf.~\cite[Corollary~5.12]{cr}). Consequently, as detailed in \cite[\S~6]{cr}, for a group $G$ endowed with a root group datum one can construct two totally disconnected groups $G_+$ and $G_-$ by a process of topological completion. Given $\varepsilon\in\{+,-\}$, the group $G_\varepsilon$ admits the BN-pair $(\widehat B_\varepsilon, N,S)$, where $\widehat B_\varepsilon$ denotes the closure of $B_\varepsilon$ in $G_\varepsilon$. The corresponding building is canonically isomorphic to $\Delta_\varepsilon$, and the kernel of the action of $G_\varepsilon$ on $\Delta_\varepsilon$ is the center $Z(G_\varepsilon)$ (which turns out to be a discrete subgroup of $G_\varepsilon$). When the root groups $U_\alpha$ are finite, it follows that $G_+$ is a t.d.l.c.~groups by \cite[Proposition~6.5]{cr}. Moreover, if in addition $G$ has finite center, then $\widehat B_+$ is compact and Theorem~\ref{thm:PJ} applies to $G_+$ (cf. Remark~\ref{rem:chi build}).
\end{example}

\subsection{Split semisimple and simply-connected algebraic groups} 
Let $p$ be a prime number, let $F$ be a non-archimedean locally compact field 
of residue characteristic $p$ and let $q$ be the cardinality of the residue 
field $\eu{k}_{F}$ of $F$.
Denote by $G$ the group of $F$-points of a split semisimple simply-connected algebraic group $\mathbf{G}$ over $F$, by $\widetilde W$ the associated affine Weyl group and by $\Iw$ an Iwahori subgroup of $G$ (see 5.2.6 of \cite{bt:2}).
The choice of $\Iw$ corresponds to the choice of a 
generating sets $\widetilde{S}$ of $\widetilde{W}$.
We denote by $\ell$ the length function of $\widetilde{W}$ with 
respect to $\widetilde{S}$.

In this context, there exists a locally finite building $\Delta$ of type $(\widetilde{W},\widetilde{S})$ on which $G$  acts Weyl-transitively with compact open stabilizers (cf.\cite{BT72}).
The group $B$ defined in Subsection~\ref{ss:weyltransitive} is the Iwahori subgroup $\Iw$ while the subgroups $P_J$ are called {\em parahoric subgroups} of $G$.
For every $J\subsetneq \widetilde{S}$ we denote by $P_J^1$ the 
pro-$p$-radical of 
$P_J$. 
It is a normal pro-$p$-subgroup of finite index of $P_J$ and so it is in 
$\CO(G)$.
We remark that if $J_1\subset J_2$ then 
$P_{J_2}^1\subset P_{J_1}^1\subset P_{J_1}\subset P_{J_2}$.

\smallskip
Fixed $J\subsetneq\widetilde{S}$, the aim is
to study the series $\zeta_{G,P_J^1}(s)$.
For every $x\in{}^J \widetilde{W}^J$, let $\mathcal{Y}_x$ be a set of 
representatives of left $P_{J\cap 
	xJx^{-1}}$-cosets in $P_J$ and let 
$\mathcal{Z}_x$ be a set of representatives of right 
$P^1_{J\cap x^{-1}Jx}$-cosets in $P_J$.
By Lemmata 3.19 and 3.20 of \cite{Morris}, one has 
\[P_{J\cap xJx^{-1}}=P_J^1(P_J\cap g_xP_Jg_x^{-1}) \quad\text{and}\quad 
P^1_{J\cap x^{-1}Jx}=P_J^1(P_J\cap g_x^{-1}P^1_Jg_x).\]
Then, since $P^1_J$ is normal in $P_J$, one concludes that
\begin{equation}\label{eq:PJ}
	P_J=\bigsqcup_{y\in \mathcal{Y}_x}yP_J^1(P_J\cap g_xP_Jg_x^{-1})
	=\bigsqcup_{z\in \mathcal{Z}_x} (P_J\cap g_x^{-1}P^1_Jg_x)P_J^1z.
\end{equation}

\begin{lem}\label{lemma:doublePJ1}
	Let $x\in{}^J\widetilde{W}^J$. Then
	\[ P_Jg_xP_J=\bigsqcup_{y\in \mathcal{Y}_x}\bigsqcup_{z\in \mathcal{Z}_x}
	P_J^1yg_xzP_J^1.
	\]
\end{lem}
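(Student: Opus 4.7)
The plan is to prove both the inclusion $P_J g_x P_J \subseteq \bigcup_{y,z} P_J^1 y g_x z P_J^1$ and the disjointness of the indexed family separately. Both arguments rest on the two decompositions of $P_J$ recorded in \eqref{eq:PJ} together with the normality $P_J^1 \triangleleft P_J$ and the elementary commutation identity $(P_J \cap g_x P_J g_x^{-1}) g_x = g_x (P_J \cap g_x^{-1} P_J g_x)$.

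For the inclusion, I would start with an arbitrary element $p_1 g_x p_2 \in P_J g_x P_J$ and apply the left decomposition in \eqref{eq:PJ} to $p_1$, writing $p_1 = y u v$ with $y \in \mathcal{Y}_x$, $u \in P_J^1$, and $v \in P_J \cap g_x P_J g_x^{-1}$. Pushing $v$ through $g_x$ yields $v g_x = g_x w$ with $w \in P_J \cap g_x^{-1} P_J g_x \subseteq P_J$. I would then apply the right decomposition of \eqref{eq:PJ} to $w p_2 \in P_J$, writing $w p_2 = v' u' z$ with $v' \in P_J \cap g_x^{-1} P_J^1 g_x$, $u' \in P_J^1$, and $z \in \mathcal{Z}_x$. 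Since $g_x v' \in P_J^1 g_x$ by the defining property of $v'$, and $g_x u' z = g_x z \cdot (z^{-1} u' z)$ with $z^{-1} u' z \in P_J^1$ by normality, absorbing all the $P_J^1$-factors to the outside via repeated use of $P_J^1 \triangleleft P_J$ places $p_1 g_x p_2$ in $P_J^1 y g_x z P_J^1$.

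For the disjointness, assume $y g_x z = a y' g_x z' b$ for some $a, b \in P_J^1$ and $y, y' \in \mathcal{Y}_x$, $z, z' \in \mathcal{Z}_x$. Rearranging gives the identity
\[
(y')^{-1} a^{-1} y \;=\; g_x \,(z' b z^{-1})\, g_x^{-1}.
\]
The left-hand side lies in $P_J$ and the right-hand side in $g_x P_J g_x^{-1}$, so this common element belongs to $P_J \cap g_x P_J g_x^{-1}$. Combined with $a \in P_J^1$ this forces $(y')^{-1} y \in P_J^1 (P_J \cap g_x P_J g_x^{-1}) = P_{J \cap x J x^{-1}}$, hence $y = y'$ by the choice of $\mathcal{Y}_x$. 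Substituting $y = y'$ back, normality of $P_J^1$ now tells us that the left-hand side actually lies in $P_J^1$, so $z' b z^{-1} \in P_J \cap g_x^{-1} P_J^1 g_x$, and an entirely analogous absorption argument gives $z' z^{-1} \in P_J^1 (P_J \cap g_x^{-1} P_J^1 g_x) = P^1_{J \cap x^{-1} J x}$, forcing $z = z'$ by the choice of $\mathcal{Z}_x$.

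The main obstacle is notational bookkeeping: every commutation across $g_x$ interchanges an intersection of the form $P_J \cap g_x (\cdot) g_x^{-1}$ with one of the form $P_J \cap g_x^{-1} (\cdot) g_x$, and one must exploit the exact symmetry between the two decompositions in \eqref{eq:PJ} to close both the inclusion and the disjointness arguments. Once the correct order of absorption is fixed, the proof reduces to a direct manipulation of elements and requires no input beyond \eqref{eq:PJ} and the normality of $P_J^1$ in $P_J$.
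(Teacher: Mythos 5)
Your proof is correct and follows essentially the same route as the paper: both rest on the two decompositions in \eqref{eq:PJ}, the identities $P_{J\cap xJx^{-1}}=P_J^1(P_J\cap g_xP_Jg_x^{-1})$ and $P^1_{J\cap x^{-1}Jx}=P_J^1(P_J\cap g_x^{-1}P^1_Jg_x)$, and the normality of $P_J^1$ in $P_J$, the only cosmetic difference being that you carry out the covering step element-by-element where the paper manipulates the sets directly. Just note explicitly that the reverse containment $P_J^1yg_xzP_J^1\subseteq P_Jg_xP_J$ is immediate since $y,z\in P_J$ and $P_J^1\subseteq P_J$, so your one inclusion plus disjointness indeed gives the stated equality.
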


\begin{proof}
	Using (\ref{eq:PJ}) we obtain 
	\begin{align*}
		P_Jg_xP_J&=\bigcup_{y\in \mathcal{Y}_x}yP_J^1(P_J\cap g_xP_Jg_x^{-1})g_x
		P_J
		=\bigcup_{y\in \mathcal{Y}_x}yP_J^1g_xP_J\\
		&=
		\bigcup_{y\in \mathcal{Y}_x}\bigcup_{z\in \mathcal{Z}_x}
		yP_J^1g_x(P_J\cap g_x^{-1}P^1_Jg_x)P_J^1z=
		\bigcup_{y\in \mathcal{Y}_x}\bigcup_{z\in \mathcal{Z}_x}
		yP_J^1g_xP_J^1z
	\end{align*}
	and $yP_J^1g_xP_J^1z=P_J^1yg_xzP_J^1$ since $P^1_J$ is normal in $P_J$.
	Now, suppose that $y_1P_J^1g_xP_J^1z_1=y_2P_J^1g_xP_J^1z_2$ with 
	$y_1,y_2\in\mathcal{Y}_x$ and $z_1,z_2\in\mathcal{Z}_x$.
	We obtain 
	$y_2^{-1}y_1\in P_J\cap P_J^1g_xP_Jg_x^{-1}P_J^1=P_J^1(P_J\cap 
	g_xP_Jg_x^{-1})=P_{J\cap xJx^{-1}}$
	and so $y_1=y_2$.
	This implies $P_J^1g_xP_J^1z_1=P_J^1g_xP_J^1z_2$ and so 
	$z_1z_2^{-1}\in P_J\cap P_J^1g_x^{-1}P^1_Jg_xP_J^1=P_J^1(P_J\cap 
	g_x^{-1}P^1_Jg_x)=P^1_{J\cap x^{-1}Jx}$. Hence, $z_1=z_2$ and then the 
	union is 
	disjoint.
\end{proof}

Lemma \ref{lemma:doublePJ1} and (\ref{eq:decGPJ}) imply that 
\[\Xi_{J}:=\{yg_xz\,|\, x\in {}^J\widetilde{W}^J, y\in\mathcal{Y}_x, 
z\in\mathcal{Z}_x\}\]
is a set of representatives of $P_J^1$-double cosets of $G$. 
By Remark~\ref{rem:munorma}, for every $x\in {}^J\widetilde{W}^J$, 
$y\in\mathcal{Y}_x$ and  
$z\in\mathcal{Z}_x$, one has 
$\mu_{P_J^1}(P_J^1yg_xzP_J^1)=\mu_{P_J^1}(P_J^1g_xP_J^1)$ and then, by Lemma 
\ref{lemma:doublePJ1}, one has
\[\mu_{P_J}(P_Jg_xP_J)=
\sum_{y\in\mathcal{Y}_x}\sum_{z\in\mathcal{Y}_x}
\frac{\mu_{P_J^1}(P_J^1yg_xzP_J^1)}{|P_J:P_J^1|}
=\frac{|\mathcal{Y}_x||\mathcal{Z}_x|}{|P_J:P_J^1|}\mu_{P_J^1}(P_J^1g_xP_J^1).
\]
By Lemma \ref{lemma:respectlength}, one has
$|P_{J\cap xJx^{-1}}:\Iw|=\tgamma_{_{\widetilde{W}_{J\cap xJx^{-1}},J}}(q)=
\gamma_{_{\widetilde{W}_{J\cap x^{-1}Jx},J}}(q)=|P_{J\cap x^{-1}Jx}:\Iw|$
and therefore $|\mathcal{Y}_x|=|P_J:P_{Q(x)}|$. In particular,
\begin{equation}\label{eq:dPJ1}
	\mu_{P_J^1}(P_J^1g_xP_J^1)=
	\frac{|P_J:P_J^1|}{|P_J:P_{Q(x)}||P_J:P^1_{Q(x)}|}\mu_{P_J}(P_JxP_J).
\end{equation}
Hence, one concludes
\begin{align}\label{eq:zetaPJ1a}
	\zeta_{G,P_J^1}(s)&=\sum_{\xi\in\Xi_{J}}\mu_{P_J^1}(P_J^1\xi P_J^1)^{-s}=
	|\mathcal{Y}_x||\mathcal{Z}_x|\sum _{x\in 
		{}^J\widetilde{W}^J}\mu_{P_J^1}(P_J^1g_xP_J^1)^{-s}\nonumber\\
	&=\sum_{x\in{}^J\widetilde{W}^J} 
	\frac{|P_J:P_J^1|^{-s}} 
	{|P_J:P_{Q(x)}|^{-s-1}|P_J:P^1_{Q(x)}|^{-s-1}}\mu_{P_J}(P_JxP_J)^{-s}.
\end{align}
Thus, by Lemma \ref{lemma:dPJ} one obtains
\begin{align}\label{eq:zetaPJ1b}
	\zeta_{G,P_J^1}(s)&=
	\sum_{x\in{}^J\widetilde{W}^J} 
	\frac{|P_J:P_J^1|^{-s}|P_J:P_{Q(x)}|^{-s}} 
	{|P_J:P_{Q(x)}|^{-s-1}|P_J:P^1_{Q(x)}|^{-s-1}}q^{-\ell(x)s}\nonumber\\
	&=\sum_{Q\subset J}
	|P_J:P_{Q}||P_J:P^1_{Q}||P^1_Q:P^1_J|^{-s}
	\sum_{\substack{x\in{}^J\widetilde{W}^J\\Q(x)=Q}}
	q^{-\ell(x)s}\nonumber\\
	&=\sum_{Q\subset J}
	|P_J:P_{Q}||P_J:P^1_{Q}|
	|P^1_Q:P^1_J|^{-s}
	\gamma_{p_{_{Q,J}},\widetilde{S}}(q^{-s}).
\end{align}



\begin{thm}\label{thm:p-rad}
	The series $\zeta_{G,P^1_J}(s)$ defines a meromorphic function 
	$\tzeta_{_{G,P^1_J}}\colon\C\to\bC$, which is a rational function in 
	$q^{-s}$, and one has $\tchi_G=\tzeta_{_{G,P^1_J}}(-1)^{-1}\cdot\mu_{P^1_J}$.
\end{thm}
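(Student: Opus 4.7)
The plan is to argue directly from the closed form~\eqref{eq:zetaPJ1b} for $\zeta_{_{G,P^1_J}}(s)$. Since the right-hand side is a finite sum indexed by $Q\subset J$, it suffices to check that each summand extends to a rational function of $q^{-s}$. By Proposition~\ref{prop:pQJJrational}, the factor $\gamma_{p_{_{Q,J}},\widetilde{S}}(q^{-s})$ is already rational in $q^{-s}$. The only non-routine input is that $|P^1_Q:P^1_J|$ is a power of $q$, so that $|P^1_Q:P^1_J|^{-s}$ contributes a monomial in $q^{-s}$ rather than a generic Dirichlet factor $n^{-s}$.

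To justify this last point, observe that the chain $P^1_J\subset P^1_Q\subset P_Q\subset P_J$ together with the normality of $P^1_J$ in $P_J$ realises $P^1_Q/P^1_J$ as a subgroup of the reductive quotient $P_J/P^1_J$ over the residue field $\eu{k}_F$. By the Bruhat--Tits description of parahorics, this subgroup is precisely the unipotent radical of the parabolic subgroup of $P_J/P^1_J$ attached to $Q$; as a connected unipotent $\eu{k}_F$-group, its order is $q^{n_Q}$ for some integer $n_Q\geq 0$. Combining this with Proposition~\ref{prop:pQJJrational}, $\zeta_{_{G,P^1_J}}(s)$ extends to a meromorphic function $\tzeta_{_{G,P^1_J}}(s)$ on $\C$ that is rational in $q^{-s}$.

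For the Euler--Poincar\'e identity, evaluate~\eqref{eq:zetaPJ1b} at $s=-1$. The factorisation $|P_J:P^1_Q|\cdot|P^1_Q:P^1_J|=|P_J:P^1_J|$ allows the index $|P_J:P^1_J|$ to be pulled out of the sum, and comparison with~\eqref{eq:zetaPJ} at $s=-1$ yields
\begin{equation*}
\tzeta_{_{G,P^1_J}}(-1)=|P_J:P^1_J|\cdot\tzeta_{_{G,P_J}}(-1).
\end{equation*}
Since $\mu_{P^1_J}=|P_J:P^1_J|\cdot\mu_{P_J}$ (both being Haar measures of mass $1$ on their defining compact open subgroup), Theorem~\ref{thm:PJ} gives
\begin{equation*}
\tzeta_{_{G,P^1_J}}(-1)^{-1}\cdot\mu_{P^1_J}=\tzeta_{_{G,P_J}}(-1)^{-1}\cdot\mu_{P_J}=\tchi_G,
\end{equation*}
which completes the argument. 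The main hurdle is the structural claim that $|P^1_Q:P^1_J|$ is a power of $q$; without it one would only obtain meromorphicity on $\C$ and not rationality in $q^{-s}$. Everything else is algebraic manipulation of finite indices grafted onto Theorem~\ref{thm:PJ}.
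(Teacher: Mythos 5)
Your proposal is correct and follows essentially the same route as the paper: rationality in $q^{-s}$ via \eqref{eq:zetaPJ1b}, Proposition~\ref{prop:pQJJrational} and the fact that $|P^1_Q:P^1_J|$ is a power of $q$, followed by the identity $\tzeta_{_{G,P^1_J}}(-1)=|P_J:P^1_J|\cdot\tzeta_{_{G,P_J}}(-1)$ and Theorem~\ref{thm:PJ} together with $\mu_{P^1_J}=|P_J:P^1_J|\cdot\mu_{P_J}$. The only (harmless) differences are that you justify the power-of-$q$ claim by identifying $P^1_Q/P^1_J$ with the unipotent radical of the corresponding parabolic in the reductive quotient $P_J/P^1_J$ --- a fact the paper asserts without proof --- and that you read off the relation at $s=-1$ from \eqref{eq:zetaPJ1b} rather than from \eqref{eq:zetaPJ1a}.
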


\begin{proof}
	By Proposition \ref{prop:pQJJrational}, $\tgamma_{p_{_{Q,J}},\widetilde{S}}(t)$ 
	is a rational function in $t$ for every $Q\subset J$. Hence, since 
	$|P^1_Q:P^1_J|$ is a power of $q$,  by 
	(\ref{eq:zetaPJ1b}) the series 
	$\zeta_{G,P^1_J}(s)$ admits a meromorphic continuation to $\C$ which is a 
	rational function in $q^{-s}$.
	Furthermore, by (\ref{eq:zetaPJ1a}) we have
	$\tzeta_{G,P^1_J}(-1)=|P_J:P_J^1|\tzeta_{G,P_J}(-1)$ and by Theorem 
	\ref{thm:PJ} we obtain
	$\tchi_G=\tzeta_{G,P_J}(-1)^{-1}\mu_{P_J}= 
	\tzeta_{G,P^1_J}(-1)^{-1}\mu_{P^1_J}$.
\end{proof}
\begin{ques}
    Let $\Delta$ be a locally finite building of type $(W,S)$ and let $G$ be a Weyl-transitive closed subgroup of $\Aut_0(\Delta)$. Assume that $\Delta$ has uniform thickness $q+1$, i.e., all the entries of the thickness vector of $\Delta$ equal $q$, and choose a chamber $C$. Let $P_J$ be a spherical parabolic subgroup, i.e., $P_J$ is the stabiliser of a face $D$ of the chamber $C$. Denote by $U_J$ the kernel of the action of $P_J$ on the link of $D$. Does the series 
    $\zeta_{G,U_J}(s)$ define a meromorphic function that is a rational function in 
	$q^{-s}$? Additionally, does one have $\tchi_{G}=\tzeta_{G,U_J}(-1)^{-1}\cdot\mu_{U_J}$?
\end{ques}

\printbibliography
\end{document}